\theoremstyle{plain}
\newtheorem{teor}{Theorem}
\numberwithin{teor}{section}
\numberwithin{equation}{section}
\theoremstyle{definition}
\newaliascnt{defi}{teor}
\newtheorem{defi}[defi]{Definition}
\theoremstyle{plain}
\newaliascnt{lemma}{teor}%
\newtheorem{lemma}[lemma]{Lemma}
\theoremstyle{plain}
\newaliascnt{prop}{teor}%
\newtheorem{prop}[prop]{Proposition}
\theoremstyle{plain}
\newaliascnt{cor}{teor}%
\newtheorem{cor}[cor]{Corollary}
\theoremstyle{definition}
\newaliascnt{ex}{teor}%
\theoremstyle{definition}
\newaliascnt{oss}{teor}%
\newtheorem{oss}[oss]{Remark}
\theoremstyle{plain}
\DeclarePairedDelimiter{\abs}{\lvert}{\rvert}
\DeclarePairedDelimiter{\norma}{\lVert}{\rVert}
\newcommand{\R}{\mathbb{R}}
\newcommand{\Ln}{\mathcal{L}^n}
\newcommand{\Hn}{\mathcal{H}^{n-1}}
\newcommand{\eps}{\varepsilon}
\newcommand{\ssubset}{\subset\joinrel\subset}
\DeclareMathOperator{\divv}{div}
\DeclareMathOperator{\loc}{loc}
\DeclareMathOperator{\supp}{supp}
\DeclareMathOperator{\Tr}{Tr}
\newcommand{\leqnomode}{\tagsleft@true\let\veqno\@@leqno}
\newcommand{\reqnomode}{\tagsleft@false\let\veqno\@@eqno}
\newcommand{\gradh}{\frac{\nabla h}{\abs{\nabla h}}}
\DeclareMathOperator{\id}{Id}
\title{On the asymptotic behavior of a diffraction problem with a thin layer}
\author{P. Acampora, E. Cristoforoni}
\date{}
\newcommand{\Addresses}{{%
 \bigskip 
 \footnotesize 
 
 \textsc{Dipartimento di Matematica e Applicazioni ``R. Caccioppoli'', Universit\`a degli studi di Napoli Federico II, Via Cintia, Complesso Universitario Monte S. Angelo, 80126 Napoli, Italy.}\par\nopagebreak 
 
 \medskip 
 
 \textit{E-mail address}, P.~Acampora: \texttt{paolo.acampora@unina.it} 
  
 \medskip

\textsc{Mathematical and Physical Sciences for Advanced Materials and Technologies, Scuola Superiore Meridionale, Largo San Marcellino 10, 80126, Napoli, Italy.}\par\nopagebreak 
 
 \medskip 
 
 \textit{E-mail address}, E.~Cristoforoni: \texttt{emanuele.cristoforoni@unina.it} 
}}
\begin{document}

\maketitle
\begin{abstract}
    We investigate the behavior of the solution to an elliptic diffraction problem in the union of a smooth set $\Omega$ and a thin layer $\Sigma$ locally described by $\eps h$, where $h$ is a positive function defined on the boundary $\partial\Omega$, and $\eps$ is the ellipticity constant of the differential operator in the thin layer $\Sigma$. We study the problem in the limit for $\eps$ going to zero and prove a first-order asymptotic development by $\Gamma$-convergence of the associated energy functional.

\textsc{Keywords:} Diffraction, Thermal Insulation, $\Gamma$-Convergence, Robin boundary condition

\textsc{MSC 2020:} 49J45, 35J25, 
35B40, 80A19
    
\end{abstract}
\section{Introduction}
\subsection{The general idea}
Let $\Omega\subset\R^n$ be a bounded open set with $C^{1,1}$ boundary, let $\nu_0$ denote the outer unit normal to $\partial\Omega$, and let $h\in C^{1,1}(\partial\Omega)$. For every $\eps>0$, consider the set 
\[
\Sigma_\eps=\Set{\sigma+t\nu_0(\sigma)|\begin{gathered}\sigma\in\partial\Omega,\\ 0<t<\eps h(\sigma)\end{gathered}}.
\]

\begin{figure}
\caption{Body $\Omega$ surrounded by a thin layer $\Sigma_\eps$}
\centering
\begin{tikzpicture}
    \node[anchor=south west,inner sep=0,opacity=1] (image) at (0,0) {
{\includegraphics[width=.8\textwidth]{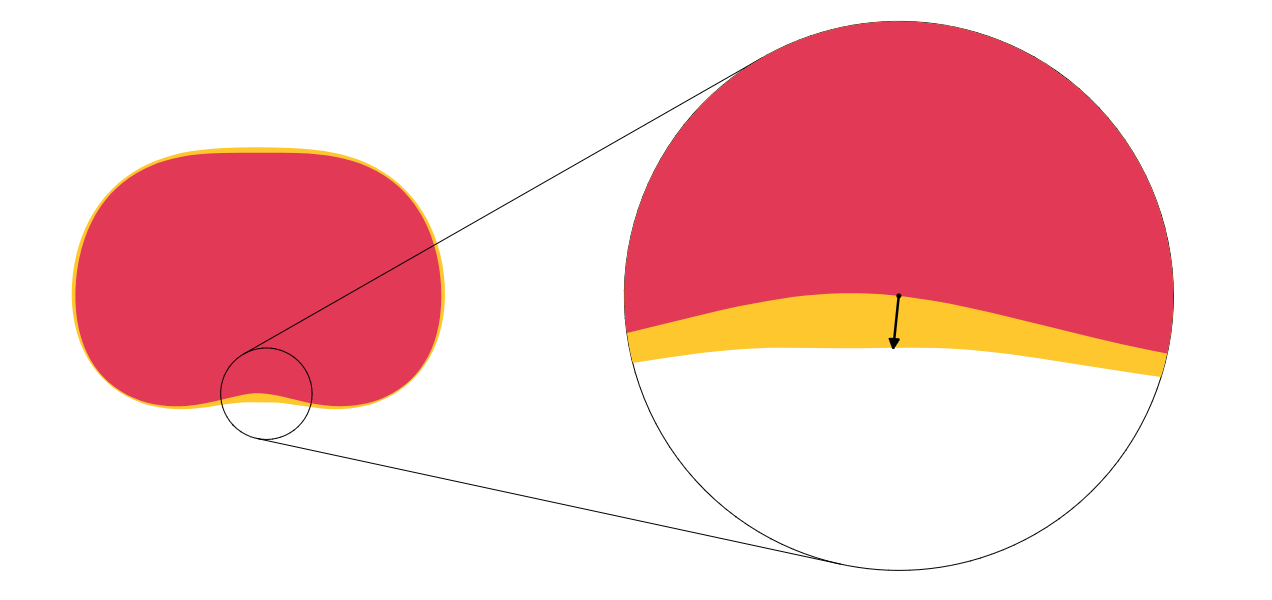}}};
    \begin{scope}[x={(image.south east)},y={(image.north west)}]
        \node[anchor=south west] at (0.18, 0.5) {\Large$\Omega$};
        \node[anchor=south west] at (0.6,0.28) {\Large $\Sigma_\eps$};
        \node[anchor=south west] at (0.62,0.42) {\small $\eps h(\sigma)$};
        \node[anchor=south west] at (0.7,0.5) {$\sigma$};
    \end{scope}
\end{tikzpicture}
\end{figure}

Let $\Omega_\eps=\overline{\Omega}\cup\Sigma_\eps$ and let $\nu_\eps$ be the unit outer normal to $\partial\Omega_\eps$. In this paper, we study the asymptotic behavior, as $\eps$ goes to zero, of the solutions $u_\eps$ to the following boundary value problem:
\begin{equation}
\label{problema}
\begin{cases}-\Delta u_\eps= f & \text{in }\Omega,\\[3 pt]
u_\eps^-=u_\eps^+  & \text{on }\partial\Omega, \\[3 pt]
\dfrac{\partial u_\eps^-}{\partial \nu_0\hphantom{\scriptstyle{-}}}=\eps\dfrac{\partial u_\eps^+}{\partial \nu_0\hphantom{\scriptstyle{+}}} & \text{on }\partial\Omega, \\[6 pt]
\Delta u_\eps=0 & \text{in } \Sigma_\eps, \\[3 pt]
\eps\dfrac{\partial u_\eps}{\partial \nu_\eps} +\beta u_\eps=0 & \text{on } \partial \Omega_\eps,
\end{cases}
\end{equation}
where $f\in L^2(\Omega)$ and $u_\eps^-$ and $u_\eps^+$ denote the traces of $u_\eps$ on $\partial\Omega$ in $\Omega$ and in $\Sigma_\eps$ respectively. As shown in \cite{depiniscatro}, in $\Omega$, the functions $u_\eps$ converges weakly in $H^1(\Omega)$ to $u_0$, the weak solution to the boundary value problem
\begin{equation}
\label{problema0}
\begin{cases}
    -\Delta {u}_0=f &\text{in }\Omega,\\[5 pt]
    \dfrac{\partial {u}_0}{\partial \nu_0}+\dfrac{\beta}{1+\beta h} {u}_0=0 &\text{on }\partial\Omega,
\end{cases}
\end{equation}
i.e. for every $\varphi\in H^1(\Omega)$
\[\int_\Omega \nabla u_0\cdot\nabla\varphi\,dx+\beta\int_{\partial\Omega} \dfrac{u_0\varphi}{1+\beta h}\,d\Hn=\int_\Omega f\varphi\,dx.\]
Let us notice that the functions $u_\eps$ are the minimizers, in $H^1(\Omega_\eps)$, of the functionals 
\[
\mathcal{F}_\eps(v,h)=
    \displaystyle \int_{\Omega}\abs{\nabla v}^2\,dx+\eps\int_{\Sigma_\eps} \abs{\nabla v}^2\,dx+\beta\int_{\partial\Omega_\eps} v^2\,d\Hn-2\int_{\Omega}fv\,dx,
\]
where $v\in H_1(\Omega_\eps)$. 

Our interest in this work is focused on the behavior of solutions $u_\eps$ in the set $\Sigma_\eps$. This interest is mainly motivated by the following remark:  it seems that the solutions to some variational problems in thin sets of the type $\Sigma_\eps$ (see for instance \cite{acrbibuttazzo, depiniscatro,optimalthin}),  can be approximated with functions that are linear along the normal rays to $\partial\Omega$. We will show that, indeed, if we stretch the set $\Sigma_\eps$ to the set $\Sigma_1$, then the stretched solutions $\tilde{u}_\eps$ converge to a function that, in $\Sigma_1$, is linear with respect to the distance from $\partial\Omega$.

In addition, we will show that this convergence property will be sufficient to study the first-order development by $\Gamma$-convergence of the functional $\mathcal{F}_\eps$.

\subsection{The main theorems}
Before stating the main result, we need to fix some notations. Let 
\[
d(x)=d(x,\Omega)=\min_{y\in\bar\Omega}\abs{x-y}
\]
be the distance function from $\Omega$ and define 
\[
\Gamma_{t}=\Set{x\in\R^n|\, d(x)<t}\setminus\Omega.
\]
The regularity of $\Omega$ ensures that $\Omega$ satisfies a uniform exterior ball condition, and in particular, there exists $d_0>0$ such that $d\in C^{1,1}(\Gamma_{d_0})$. Moreover, for every $z\in \Gamma_{d_0}$ there exists a unique $\sigma(z)\in\partial\Omega$ such that
\[
\abs{\sigma(z)-z}=d(z),
\]
and we will say that $\sigma(z)$ is the \emph{metric projection} of $z$ over $\Omega$; we will also denote by $\nu_0$ the extension of the outer unit normal to $\partial\Omega$ such that for every $z\in\Gamma_{d_0}$ we have $\nu_0(z)=\nu_0(\sigma(z))$. 
In the following it will be useful to have that for every $x\in\Sigma_\eps$ the metric projection of $x$ over $\Omega$ exists and it will be useful to work with an extension of $h$ in the set $\Sigma_\eps$ which is constant along the normal directions to $\partial\Omega$. Hence, we will assume $h$ to be a positive $C^{1,1}(\Gamma_{d_0})$ function such that $\norma{h}_\infty<d_0$ and $\nabla h\cdot\nu_0=0$.
To inspect the properties of the solution $u_\eps$ inside $\Sigma_\eps$, we "stretch" the solution $u_\eps$ via a pullback on the reference set $\Sigma_1$. To be more precise, we construct a diffeomorphism
\[
\Psi_\eps:\Sigma_1 \to \Sigma_\eps
\]
by rescaling the distance from $\partial\Omega$. The approach is analogous to the dilation technique proposed in \cite{BL96} for the asymptotic expansion and the derivation of the so-called effective boundary conditions.
We show the following
\begin{teor}\label{teor: tildeu}
    Let $\Omega\subset\R^n$ be a bounded, open set with $C^{1,1}$ boundary, and  fix a positive function $h\in C^{1,1}(\Gamma_{d_0})$ such that $\nabla h\cdot \nu_0=0$, and let $\Psi_\eps$ be the stretching diffemorphism defined in \autoref{def: stretching}. Let $u_\eps$ be the unique weak solution to \eqref{problema}, and let $u_0$ be the unique weak solution to \eqref{problema0}. If we let 
    \[
    \tilde{u}_\eps(z)=\begin{cases}
        u_\eps(z) &\text{if }z\in\Omega,\\[5 pt]
        u_\eps(\Psi_\eps(z)) &\text{if }z\in\Sigma_1,
    \end{cases}
    \] 
    then the family $\tilde{u}_\eps$ is equibounded in $H^1(\Omega_1)$ and, up to a subsequence, converges weakly in $H^1(\Omega_1)$ to the function 
    \[\tilde{u}_0(z)=\begin{cases} 
u_0(z) &\text{if }z\in\Omega,\\[5 pt]
    u_0(\sigma(z))\left(1-\dfrac{\beta d(z)}{1+\beta h(z)}\right) &\text{if }z\in\Sigma_1.
\end{cases}\] 
\end{teor}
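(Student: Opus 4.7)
The plan is a standard energy/compactness/identification strategy executed in the stretched variables. First I would obtain uniform a priori bounds on $u_\eps$ by a competitor argument: take $v_\eps\in H^1(\Omega_\eps)$ equal to $u_0$ on $\Omega$ (where $u_0\in H^2(\Omega)\cap C(\overline\Omega)$ by elliptic regularity for \eqref{problema0}), and on $\Sigma_\eps$ equal to the pushforward through $\Psi_\eps$ of the candidate limit, namely
\[
v_\eps(\sigma+s\nu_0(\sigma))=u_0(\sigma)\Bigl(1-\frac{\beta s}{\eps(1+\beta h(\sigma))}\Bigr),\qquad 0\le s\le \eps h(\sigma).
\]
On $\Sigma_\eps$ the normal derivative of $v_\eps$ is $O(1/\eps)$ and the tangential one $O(1)$, while $|\Sigma_\eps|=O(\eps)$; hence $\mathcal F_\eps(v_\eps)\le C$. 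By minimality, $\mathcal F_\eps(u_\eps)\le C$, which yields
\[
\|u_\eps\|_{H^1(\Omega)}+\sqrt\eps\,\|\nabla u_\eps\|_{L^2(\Sigma_\eps)}+\|u_\eps\|_{L^2(\partial\Omega_\eps)}\le C.
\]

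Next I would transfer the bounds to $\tilde u_\eps$ through $\Psi_\eps$. In tubular coordinates the diffeomorphism acts essentially as $(\sigma,t)\mapsto(\sigma,\eps t)$, so the change of variables gives
\[
\int_{\Sigma_1}|\partial_t\tilde u_\eps|^2\simeq \eps\!\int_{\Sigma_\eps}|\partial_\nu u_\eps|^2,\qquad \int_{\Sigma_1}|\nabla_\tau\tilde u_\eps|^2\simeq \tfrac1\eps\!\int_{\Sigma_\eps}|\nabla_\tau u_\eps|^2.
\]
The first is controlled by the previous estimates, but the second is the heart of the matter: the crude bound $\int_{\Sigma_\eps}|\nabla u_\eps|^2\le C/\eps$ is one order of $\eps$ too weak. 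To upgrade it, I would exploit the minimality of $u_\eps$ against the explicit competitor: from the parallelogram identity
\[
\mathcal F_\eps(v_\eps)-\mathcal F_\eps(u_\eps)=\int_\Omega|\nabla(v_\eps-u_\eps)|^2+\eps\!\int_{\Sigma_\eps}|\nabla(v_\eps-u_\eps)|^2+\beta\!\int_{\partial\Omega_\eps}(v_\eps-u_\eps)^2,
\]
combined with a matching $\Gamma$-liminf computation for the stretched functional, the right-hand side is $o(1)$. Since the ansatz obeys $\int_{\Sigma_\eps}|\nabla_\tau v_\eps|^2=O(\eps)$ by construction, this forces $\int_{\Sigma_\eps}|\nabla_\tau u_\eps|^2=O(\eps)$, yielding the missing bound $\|\nabla_\tau\tilde u_\eps\|_{L^2(\Sigma_1)}\le C$ and hence $\tilde u_\eps$ uniformly bounded in $H^1(\Omega_1)$.

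Finally I would extract a weakly convergent subsequence $\tilde u_\eps\rightharpoonup \tilde u$ in $H^1(\Omega_1)$ and identify $\tilde u$. On $\Omega$ one has $\tilde u=u_0$ by the already recalled result of \cite{depiniscatro}. On $\Sigma_1$, pass to the limit in the weak formulation of $\Delta u_\eps=0$ rewritten in stretched variables: for each $\tilde\varphi\in H^1(\Sigma_1)$,
\[
\int_{\Sigma_1}\!\bigl(\eps^2\nabla_\tau\tilde u_\eps\cdot\nabla_\tau\tilde\varphi+\partial_t\tilde u_\eps\,\partial_t\tilde\varphi\bigr)J_\eps\,dz+\beta\!\int_{\partial\Omega_1^{\mathrm{out}}}\!\tilde u_\eps\tilde\varphi\,dS_\eps=0,
\]
with Jacobians $J_\eps\to J_0$. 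The $\eps^2$ factor kills the tangential term (thanks to the uniform bound from the previous step), giving $\partial_t^2\tilde u=0$ on every normal fiber, together with the matching condition $\tilde u(\sigma,0)=u_0(\sigma)$ on $\partial\Omega$ and the Robin condition $\partial_t\tilde u+\beta\tilde u=0$ at $t=h(\sigma)$. The unique solution of this two-point boundary-value problem is exactly $\tilde u(\sigma,t)=u_0(\sigma)\bigl(1-\beta t/(1+\beta h(\sigma))\bigr)$, and uniqueness upgrades subsequential to full convergence. The principal technical obstacle is the tangential bound in the second step: it is where the first-order structure of the problem enters, and the naive zeroth-order energy inequality is not enough.
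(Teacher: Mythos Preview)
Your overall architecture is right, and you have correctly located the crux of the matter: the tangential bound $\int_{\Sigma_\eps}|\nabla_\tau u_\eps|^2=O(\eps)$. However, the argument you propose for it does not close.

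From the parallelogram identity and the fact that both $\mathcal F_\eps(v_\eps)$ and $\mathcal F_\eps(u_\eps)$ converge to $\mathcal F_0(u_0)$, you get
\[
\eps\int_{\Sigma_\eps}|\nabla(v_\eps-u_\eps)|^2\le \mathcal F_\eps(v_\eps)-\mathcal F_\eps(u_\eps)=o(1),
\]
hence $\int_{\Sigma_\eps}|\nabla_\tau(v_\eps-u_\eps)|^2=o(1/\eps)$, and the triangle inequality only yields $\int_{\Sigma_\eps}|\nabla_\tau u_\eps|^2=o(1/\eps)$, not $O(\eps)$. Even if you sharpen the energy gap to $O(\eps)$ (which can be done: the $\Gamma$-limsup computation gives $\mathcal F_\eps(v_\eps)\le \mathcal F_0(u_0)+C\eps$, and a direct lower bound on $\mathcal F_\eps(u_\eps)$ of the same order is available from the Cauchy--Schwarz manipulations in the liminf proof), you still only obtain $\int_{\Sigma_\eps}|\nabla_\tau u_\eps|^2=O(1)$, which after stretching becomes $\int_{\Sigma_1}|\nabla_\tau\tilde u_\eps|^2=O(1/\eps)$ --- one full power of $\eps$ short. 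To reach $O(\eps)$ by this route you would need $\mathcal F_\eps(v_\eps)-\mathcal F_\eps(u_\eps)=O(\eps^2)$, i.e.\ you would need to know the first-order asymptotic development of the minimum value; but in the paper that development (\autoref{teorema1}) is proved \emph{using} \autoref{teor: tildeu}, so the argument becomes circular.

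The paper does not obtain the tangential bound from an energy comparison at all. It proves instead a uniform second-order estimate (\autoref{teor: C11energy}),
\[
\eps\int_{\Sigma_\eps}|D^2 u_\eps|^2\,dx+\beta\int_{\partial\Omega_\eps}|\nabla^{\partial\Omega_\eps}u_\eps|^2\,d\Hn\le C,
\]
via a local flattening of both $\partial\Omega$ and $\partial\Omega_\eps$ and a Nirenberg difference-quotient argument adapted to the transmission/Robin structure. The tangential bound $\int_{\Sigma_\eps}|\nabla_\tau u_\eps|^2\le C\eps$ then follows (\autoref{teor:esttan}) by integrating $\nabla_\tau u_\eps$ along the normal fibers from $\partial\Omega_\eps$ (where the tangential trace is controlled) and using the $H^2$ bound over the fiber of length $O(\eps)$. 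This regularity step is the genuine extra input that your variational comparison cannot supply.
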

\noindent Notice that the limit function $\tilde{u}_0$ turns out to be linear with respect to the distance $d(z)$. 
 \medskip

The functions $u_\eps$ are the minimizers, in $H^1(\Omega_\eps)$, of the functionals 
\[\mathcal{F}_\eps(v,h)=
    \displaystyle \int_{\Omega}\abs{\nabla v}^2\,dx+\eps\int_{\Sigma_\eps} \abs{\nabla v}^2\,dx+\beta\int_{\partial\Omega_\eps} v^2\,d\Hn-2\int_{\Omega}fv\,dx,\]
related to the thermal insulation of a solid, represented by the set $\Omega$, with a thin layer of highly insulating material, represented by the set $\Sigma_\eps$. In this setting, $\eps$ is related to both the mean thickness and the conductivity of the insulating layer $\Sigma_\eps$, while the function  $h$ represents the shape of the layer. Finally, the Robin boundary condition models the case in which the heat exchange with the environment occurs through convection, and $u_\eps$ is the temperature of the insulated body. 

Similar problems have been studied before in the context of reinforcement problems in \cite{BCF}, \cite{friedman}, \cite{acrbibuttazzo}, and more recently in the context of thermal insulation in \cite{bubuni}, \cite{depiniscatro} and \cite{optimalthin}.

In particular, in \cite{depiniscatro} it was proved that the family of functionals $\mathcal{F}_\eps(\cdot,h)$ $\Gamma$-converges, in the strong $L^2$-topology, to the functional
\[ 
\mathcal{F}_0(v,h)=\displaystyle \int_{\Omega}\abs{\nabla v}^2\,dx+\beta\int_{\partial\Omega} \dfrac{v^2}{1+\beta h}\,d\Hn-2\int_{\Omega}fv\,dx.
\] 
Moreover, minimizing the approximated functional $\mathcal{F}_0$ with respect to both variables, the authors proved that, under an integral constraint for the function $h$,  a minimizing couple $(u_0,h_{opt})$ exists and that the optimal function $h_{opt}$, and hence the shape of the insulating layer, should be related to the limit temperature $u_0$, while the dependence on the geometry of $\Omega$ is only implicit. In \cite{optimalthin} a similar problem was studied through a first-order asymptotic development by $\Gamma$-convergence (see \autoref{def:asymptotic}), the main result is in an approximated energy functional whose minimizer $h_{opt}$ explicitly depends on the mean curvature of $\partial\Omega$.

In the spirit of \cite{optimalthin}, we aim to study a first-order asymptotic development by $\Gamma$-convergence of $\mathcal{F}_\eps$. In particular, thanks to \autoref{teor: tildeu}, we are able to prove the following
\begin{teor}
\label{teorema1} 
Let $\Omega\subset\R^n$ be a bounded, open set with $C^{1,1}$ boundary, and  fix a positive function $h\in C^{1,1}(\Gamma_{d_0})$ such that $\nabla h\cdot \nu_0=0$. Then the functional
\[
\delta\mathcal{F}_\eps(\cdot, h)=\dfrac{\mathcal{F}_\eps(\cdot,h)-\mathcal{F}_0(u_0,h)}{\eps}
\]
$\Gamma$-converges, in the strong $L^2(\R^n)$ topology, as $\eps\to 0^+$, to
\[
    \mathcal{F}^{(1)}(v,h)=%
    \begin{cases}%
        \displaystyle \beta\int_{\partial\Omega} \frac{Hh(2+\beta h)}{2(1+\beta h)^2}\,u_0^2\,d\Hn, %
        &\text{if }v=u_0,\\[10 pt] %
        +\infty &\text{if }v\ne u_0,
    \end{cases}
\]
where $H$ denotes the mean curvature of $\partial\Omega$.
\end{teor}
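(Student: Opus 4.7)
Plan: The proof splits into a $\Gamma$-limsup and a $\Gamma$-liminf. Because $\mathcal{F}^{(1)}$ is finite only at $v=u_0$, the case $v \neq u_0$ in the liminf is immediate: by the zero-order $\Gamma$-convergence $\mathcal{F}_\eps \to \mathcal{F}_0$ proved in \cite{depiniscatro}, together with strict convexity of $\mathcal{F}_0$ and uniqueness of $u_0$ as its minimizer, any $v_\eps \to v \neq u_0$ in $L^2(\R^n)$ satisfies $\liminf \mathcal{F}_\eps(v_\eps, h) \geq \mathcal{F}_0(v, h) > \mathcal{F}_0(u_0, h)$, so $\delta\mathcal{F}_\eps(v_\eps, h) \to +\infty$.

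For the recovery sequence at $v=u_0$, the explicit limit $\tilde{u}_0$ of \autoref{teor: tildeu} dictates the candidate $v_\eps=u_0$ on $\Omega$ and
\[
v_\eps(x) = u_0(\sigma(x))\left(1 - \frac{\beta d(x)}{\eps(1+\beta h(\sigma(x)))}\right) \qquad \text{on } \Sigma_\eps.
\]
I would compute $\mathcal{F}_\eps(v_\eps, h)$ in Fermi coordinates $(\sigma, t)$, $t\in(0,\eps h(\sigma))$: the volume element expands as $(1 + tH + O(t^2))\,dt\,d\Hn$ and the area element on $\partial\Omega_\eps$ as $(1 + \eps h H + O(\eps^2))\,d\Hn$. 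The gradient of $v_\eps$ is dominated by its normal component of order $\eps^{-1}$, while the tangential part is $O(1)$ and therefore contributes only $O(\eps^2)$ after multiplication by $\eps$. Rescaling $t = \eps s$, the zero-order piece of $\eps\int_{\Sigma_\eps}|\nabla v_\eps|^2$ becomes $\int_{\partial\Omega}\beta^2 u_0^2 h/(1+\beta h)^2\,d\Hn$, which combines with the zero-order boundary piece $\beta\int_{\partial\Omega}u_0^2/(1+\beta h)^2\,d\Hn$ into exactly $\beta\int_{\partial\Omega}u_0^2/(1+\beta h)\,d\Hn$, reconstructing $\mathcal{F}_0(u_0, h)$. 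The two $O(\eps)$ curvature corrections from the tube volume and the outer area add to the integrand $\beta Hh(2+\beta h)/(2(1+\beta h)^2)\,u_0^2$, giving $\mathcal{F}_\eps(v_\eps, h) = \mathcal{F}_0(u_0, h) + \eps\mathcal{F}^{(1)}(u_0, h) + o(\eps)$.

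For the liminf at $v=u_0$, I would exploit the quadratic structure of $\mathcal{F}_\eps$: minimality of $u_\eps$ yields
\[
\mathcal{F}_\eps(v_\eps, h) - \mathcal{F}_\eps(u_\eps, h) = a_\eps(v_\eps-u_\eps, v_\eps-u_\eps) \geq 0,
\]
where $a_\eps$ is the associated positive semidefinite bilinear form. Hence $\delta\mathcal{F}_\eps(v_\eps, h) \geq \delta\mathcal{F}_\eps(u_\eps, h)$ for every admissible $v_\eps$, and the problem reduces to $\lim \delta\mathcal{F}_\eps(u_\eps, h) = \mathcal{F}^{(1)}(u_0, h)$. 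The upper bound is immediate from the minimality inequality applied to the recovery sequence. For the matching lower bound I would use \autoref{teor: tildeu} to upgrade $\tilde{u}_\eps \rightharpoonup \tilde{u}_0$ in $H^1(\Omega_1)$ to strong convergence: the explicit linear profile of $\tilde{u}_0$ already saturates the leading-order energy budget, so that weak lower semicontinuity combined with the sharp recovery-sequence upper bound forces $\int_{\Sigma_1}|\nabla\tilde{u}_\eps|^2 \to \int_{\Sigma_1}|\nabla\tilde{u}_0|^2$ and the analogous trace convergence on $\partial\Omega_\eps$. Strong convergence then permits the Fermi-coordinate expansion above to be run on $u_\eps$ itself, with $\tilde{u}_0$ in place of the linear ansatz, delivering $\mathcal{F}_\eps(u_\eps, h) = \mathcal{F}_0(u_0, h) + \eps\mathcal{F}^{(1)}(u_0, h) + o(\eps)$.

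The hard part will be exactly this strong-convergence upgrade: passing the weak $H^1$-limit inside the squared gradient on $\Sigma_1$ is not automatic from \autoref{teor: tildeu} alone, and it is the explicit linearity of $\tilde{u}_0$ (which leaves no room for a hidden first-order tangential correction without breaking the zero-order $\Gamma$-limit of \cite{depiniscatro}) that makes the argument rigid. Once strong convergence is in hand, the appearance of the mean curvature $H$ in $\mathcal{F}^{(1)}$ is routine, arising entirely from the $tH$ Jacobian expansions in the tube volume and on the outer boundary.
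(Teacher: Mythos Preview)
Your limsup argument and the reduction of the liminf to the minimizers $u_\eps$ coincide with the paper's. The divergence is in how you extract the lower bound $\liminf\delta\mathcal{F}_\eps(u_\eps)\ge\mathcal{F}^{(1)}(u_0)$, and there your plan has a genuine gap.

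Write, as the paper does,
\[
\mathcal{F}_\eps(u_\eps)-\mathcal{F}_0(u_0)=\underbrace{\bigl[\mathcal{F}_0(u_\eps)-\mathcal{F}_0(u_0)\bigr]}_{=:A\ge 0}
+\underbrace{\left[\eps\!\int_{\Sigma_\eps}\!|\nabla u_\eps|^2+\beta\!\int_{\partial\Omega_\eps}\!u_\eps^2-\beta\!\int_{\partial\Omega}\!\frac{u_\eps^2}{1+\beta h}\right]}_{=:B}.
\]
Your ``direct Fermi expansion on $u_\eps$'' amounts to asserting that both $A/\eps$ and $B/\eps$ converge. But strong $H^1(\Omega_1)$ convergence of $\tilde u_\eps$ gives only $A\to 0$ and, on the layer, that the nonnegative zero-order deficit in $B$ tends to zero; neither statement carries a rate, so there is no reason that $A=o(\eps)$ or that the deficit is $o(\eps)$. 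The equality $\mathcal{F}_\eps(u_\eps)=\mathcal{F}_0(u_0)+\eps\mathcal{F}^{(1)}+o(\eps)$ that you claim simply does not follow from strong convergence alone. Your heuristic that ``linearity of $\tilde u_0$ leaves no room for a hidden correction'' is precisely the statement that these deficits vanish at order~$\eps$, and that is what needs proof.

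The paper sidesteps this entirely. It drops $A$ (it is $\ge 0$), and on $B$ it does \emph{not} separate the zero-order part from the curvature correction. Instead, after writing the layer integrals in Fermi coordinates with weight $1+tH$, it applies H\"older to $\int_0^{\eps h}|\nabla u_\eps|^2(1+tH)\,dt$, integrates by parts in $t$, and uses Young's inequality with the specific parameter $\lambda=1+\beta h$. This produces, after cancellations, the one-sided bound
\[
\delta\mathcal{F}_\eps(u_\eps)\ge \beta\int_{\partial\Omega}\frac{H\,u_\eps(\sigma)}{1+\beta h}\int_0^{h}\tilde u_\eps(\sigma+t\nu_0)\,dt\,d\Hn+O(\eps),
\]
whose right-hand side involves $\tilde u_\eps$ only through an $L^2$-continuous functional. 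Passing to the limit then requires nothing more than $\tilde u_\eps\to\tilde u_0$ in $L^2(\Sigma_1)$ and $u_\eps\to u_0$ in $L^2(\partial\Omega)$, both of which follow from \autoref{teor: tildeu} by compact embedding. No strong gradient convergence, and in particular no rate, is needed. If you want to salvage your route, you would have to prove independently that $\|u_\eps-u_0\|_{H^1(\Omega)}^2=o(\eps)$ and the analogous rate on the layer, which is strictly harder than what the paper does.
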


Let us look at the first-order approximation in $\eps$ of the energy $\mathcal{F}_\eps(u_\eps,h)$, described by
\[\mathcal{G}_\eps(h)=\mathcal{F}_0(u_0,h)+\eps\mathcal{F}^{(1)}(u_0,h).\]
We believe that the study of $\mathcal{G}_\eps$,  either with a volume penalization as in \cite{CK,BL14,ACNT23} or a volume constraint as in \cite{bubuni,depiniscatro}, will provide a deeper understanding of the optimal shape of the insulating layer. From the previous result, we conjecture that the design of the shape of the optimal insulating layer should not only be related to the limit temperature $u_0$, but also to the mean curvature $H$. However we were not able to provide an existence result for minimizers of the approximating energy.

The present paper is organized as follows. In \autoref{notations} we give introductory definitions and tools about hypersurfaces and $\Gamma$-convergence. In \autoref{stretching} we explicitly compute the diffeomorphism $\Psi_\eps$ and we prove \autoref{teor: tildeu} using some energy estimates that will be proved later in the paper. In \autoref{Gamma}, using the convergence of the functions $\tilde{u}_\eps$, we explicitly compute the first-order asymptotic development by $\Gamma$-convergence for the functional $\mathcal{F}_\eps(\cdot,h)$. Finally, in \autoref{EE} we prove the aforementioned energy estimates, using classical techniques in regularity theory adapted to this particular case of a diffraction problem.
\section{Notations and tools}
\label{notations}
    
\subsection{Distance function and curvatures}
\label{sec: distcurv}
We refer to \cite[Section 14.6]{trudinger} for the notions in this section. Let $d$, $\Gamma_t$, $\sigma$ and $\nu_0$ as in the introduction. For every $x\in\Gamma_{d_0}$ we can write
\[x=\sigma(x)+d(x)\nu_0(x),\] 
and notice that we have $\sigma,\nu_0\in C^{0,1}(\Gamma_{d_0};\R^n)$, and $\nabla d=\nu_0$. For $\Hn$-a.e. $\sigma\in\partial\Omega$ we can consider the Hessian matrix $D^2 d(\sigma)$, and, since $\abs{\nabla d}=1$, we have that $\nu_0(\sigma)$ is an eigenvector with corresponding zero eigenvalue. Let $\set{\tau_1,\dots,\tau_{n-1},\nu_0}$ be a system of normalized eigenvectors.
\begin{defi}
Let $\sigma\in \partial\Omega$ and let $\set{\tau_1(\sigma),\dots,\tau_{n-1}(\sigma),\nu_0(\sigma)}$ be an ordered system of normalized eigenvectors for $D^2d(\sigma)$. We define the \emph{principal curvatures}  $k_1(\sigma),\dots,k_{n-1}(\sigma)$ of $\partial\Omega$ at the point $\sigma$ as the eigenvalues of the matrix
\[
D^2d(\sigma)
\] 
corresponding to the eigenvectors $\tau_1(\sigma),\dots,\tau_{n-1}(\sigma)$.
\end{defi}

Let $t\in(0,d_0)$ and consider 
\[
\gamma_t=\partial(\Omega\cup\Gamma_t)=\Set{x\in\R^n|d(x)=t}\setminus\Omega.
\]
Our regularity assumptions allow us to consider for every $x\in\Gamma_{d_0}$ the matrix $D^2d(x)$, which is symmetric and represents the second fundamental form of $\gamma_{d(x)}$. Moreover, by direct computation  (see for instance \cite[Lemma 14.17]{trudinger}) we have that $\tau_i(\sigma(x))$ are eigenvectors for $D^2d(x)$ with eigenvalues computed in the following definition.
\begin{defi}
    \label{def: curv}
    Let $x\in\Gamma_{d_0}$. For every $i=1,\dots,n-1$ we denote by
    \[
        \tau_i(x):=\tau_i(\sigma(x)),
    \]
    and we denote by $k_1(x),\dots,k_{n-1}(x)$ the corresponding sequence of eigenvalues of $D^2d(x)$
    \[
        k_i(x):=\dfrac{k_i(\sigma(x))}{1+d(x) k_i(\sigma(x))},
    \]
\end{defi}
\begin{oss}
\label{oss: gammacurvatures}
By the properties of $D^2d(x)$ we can diagonalize $D\sigma(x)$ as
\[
D\sigma(x) \,\tau_i(\sigma)=\frac{1}{1+d(x)k_i(\sigma)}\tau_i(\sigma).
\]
\end{oss}

\subsection{Calculus on hypersurfaces}
We refer to \cite{maggi} for the notions in this section. For every $v,w\in \R^n$ let the \emph{tensorial product} $v\otimes w$ be the unique linear operator on $\R^n$ such that, for every $z\in\R^n$,
\[
(v\otimes w)z=(w\cdot z) v.
\]

Let $\Omega\subset\R^n$ be a bounded open set with $C^1$ boundary and let $\nu_0$ be the outer unit normal to its boundary. For every $\sigma\in\partial\Omega$ we denote by
\begin{gather*}
T_\sigma\partial\Omega=\Set{y\in\R^n | \nu_0(\sigma)\cdot y=0},
\end{gather*}
the \emph{tangent space to $\partial\Omega$ at $\sigma$}. Let $\tau_1(\sigma),\dots,\tau_{n-1}(\sigma)$ be an orthonormal basis for $T_\sigma \partial\Omega$.

\begin{defi}[Tangential gradient]
Let $\Omega$ be a bounded open set of class $C^1$, let $U\subseteq\R^n$ be an open set containing $\partial\Omega$, and let $\phi\in C^{0,1}(U;\R^n)$. We define for $\Hn$-a.e. $\sigma\in\partial\Omega$ the \emph{tangential gradient of $\phi$} at $\sigma$ as the the linear map 
\[
D^{\partial\Omega} \phi(\sigma)\colon T_\sigma \partial\Omega \to\R^{n}
\]
defined as
\[\begin{split}D^{\partial\Omega} \phi(\sigma)=&\sum_{h=1}^{n-1}\left(\nabla \phi(\sigma)\tau_h\right)\otimes\tau_h\\[5 pt]
=&\nabla\phi(\sigma)-\left(\nabla \phi(\sigma)\nu_0\right)\otimes\nu_0.
\end{split}\]
\end{defi}
Notice that the definition of tangential gradient does not depend on the choice of the orthonormal basis $\tau_1,\dots,\tau_{n-1}$.
\begin{defi}[Tangential Jacobian]
Let $\Omega$ be a bounded open set of class $C^1$, let $U\subseteq\R^n$ be an open set containing $\partial\Omega$, and let $\phi\in C^{0,1}(U;\R^n)$. We define \emph{the tangential Jacobian of $\phi$} as 
\[
J^{\partial\Omega} \phi =\sqrt{\det\big((D^{\partial\Omega} \phi)^T (D^{\partial\Omega} \phi)\big)\,},\,
\]
where the determinant has to be intended in the space $T_\sigma \partial\Omega \otimes T_\sigma\partial\Omega$.
\end{defi}
\begin{teor}[Area formula on surfaces]\label{teor:area}
Let $U\subseteq\R^n$ be an open set containing $\partial\Omega$, let $\phi\in C^{0,1}(U;\R^n)$, and let $g:\R^n\to\R$ be a positive Borel function. We have that
\[\int_{\partial\Omega} g(\phi(\sigma))\, J^{\partial\Omega} \phi\,d\Hn=\int_{\phi(\partial\Omega)} g(\sigma)\,d\Hn.\]
\end{teor}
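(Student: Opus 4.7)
The plan is to reduce the statement to the classical Euclidean area formula for Lipschitz maps, via local parametrizations of $\partial\Omega$. Since $\partial\Omega$ is of class $C^{1,1}$ (hence in particular $C^1$), I would cover it by finitely many relatively open pieces $\Gamma_\alpha$, each admitting a bi-Lipschitz chart $\psi_\alpha\colon W_\alpha\subseteq \R^{n-1}\to \Gamma_\alpha$, and fix a subordinate partition of unity $\{\eta_\alpha\}\subset C^1_c(\R^n)$. By additivity of the integral it is enough to prove the identity after replacing $g$ by $\eta_\alpha g$, i.e.\ to prove the formula locally in each chart.

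In each chart, the definition of the surface measure $\Hn$ on $\partial\Omega$ via Lipschitz parametrizations rewrites the left-hand side as
\[
\int_{W_\alpha} g(\phi(\psi_\alpha(y)))\, J^{\partial\Omega}\phi(\psi_\alpha(y))\, J\psi_\alpha(y)\, dy.
\]
The decisive step is a Jacobian chain rule: by Rademacher's theorem $D\phi$ exists $\Ln$-a.e.\ on $U$, and for $\Ln$-a.e.\ $y\in W_\alpha$ the columns of $D\psi_\alpha(y)$ span $T_{\psi_\alpha(y)}\partial\Omega$, so $D\phi(\psi_\alpha(y))\,D\psi_\alpha(y) = D^{\partial\Omega}\phi(\psi_\alpha(y))\,D\psi_\alpha(y)$. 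Taking square roots of determinants of Gram matrices then gives
\[
J(\phi\circ\psi_\alpha)(y)=J^{\partial\Omega}\phi(\psi_\alpha(y))\cdot J\psi_\alpha(y),
\]
so the local left-hand side integral is exactly the one produced by applying the classical Euclidean area formula to the Lipschitz map $\phi\circ\psi_\alpha\colon W_\alpha\to\R^n$. That formula in turn equates this integral to $\int_{\phi(\Gamma_\alpha)} g\, d\Hn$, provided $\phi$ is injective on $\partial\Omega$ so that no multiplicity factor appears; otherwise the right-hand side would be weighted by the $\Hn_0$-cardinality of the fibers.

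Summing over $\alpha$ and collapsing the partition of unity yields the claim. The main technical obstacle is the Jacobian factorization identity, which requires verifying that the $(n-1)$-dimensional Jacobian of the composition factors cleanly into the tangential Jacobian of $\phi$ times the Jacobian of the chart. This hinges on the fact that $D\psi_\alpha$ maps $\R^{n-1}$ isomorphically onto $T_{\psi_\alpha(y)}\partial\Omega$, together with the intrinsic definition of $J^{\partial\Omega}\phi$ as a determinant on the tensor space $T_\sigma\partial\Omega\otimes T_\sigma\partial\Omega$. Everything else is bookkeeping of charts, partitions of unity, and Tonelli's theorem.
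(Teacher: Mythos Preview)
The paper does not supply its own proof of this theorem; it is stated as a standard result, with the surrounding section referring to Maggi's book for the relevant notions. Your proposed argument---local $C^1$ charts $\psi_\alpha$, a subordinate partition of unity, reduction to the classical Euclidean area formula for the Lipschitz map $\phi\circ\psi_\alpha\colon W_\alpha\subset\R^{n-1}\to\R^n$, and the Jacobian factorization $J(\phi\circ\psi_\alpha)=J^{\partial\Omega}\phi\cdot J\psi_\alpha$---is exactly the standard route and is correct. The factorization step works because $D\psi_\alpha(y)$ is a linear isomorphism from $\R^{n-1}$ onto $T_{\psi_\alpha(y)}\partial\Omega$, so in an orthonormal basis of the tangent space it is represented by an invertible $(n-1)\times(n-1)$ matrix and the Gram determinant of the composition splits multiplicatively.

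Your caveat about injectivity is well placed: the identity as stated in the paper presupposes that $\phi$ is injective on $\partial\Omega$ (up to $\Hn$-null sets); the general area formula would carry the multiplicity weight $\mathcal{H}^0(\partial\Omega\cap\phi^{-1}(\cdot))$ on the right-hand side. In every application the paper makes of this theorem---the stretching diffeomorphism $\Psi_\eps$ and the normal flow $x\mapsto x+t\nu_0(x)$---the map in question is a diffeomorphism onto its image, so the injectivity hypothesis is satisfied and the issue does not arise.
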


\begin{defi}[Tangential divergence]
Let $\Omega$ be a bounded open set of class $C^1$, let $U\subseteq\R^n$ be an open set containing $\partial\Omega$ and let $\phi\in C^{0,1}(U;\R^n)$. We define the \emph{tangential divergence of $\phi$} as 
\[
\divv^{\partial\Omega} \phi = \sum_{j=1}^{n-1} (D \phi \,\tau_j)\cdot\tau_j
\]
\end{defi}
\begin{defi}[Mean Curvature]\label{def:H}
Let $\Omega$ be a bounded open set with $C^{1,1}$ boundary and let 
\[
\nu: \partial\Omega\to\mathbb{S}^{n-1}
\] 
be the outer unit normal to its boundary. Let $U\subseteq\R^n$ be an open set containing $\partial\Omega$ and let $X \in C^{0,1}(U)$ be an extension of $\nu$. For $\Hn$-a.e. $\sigma\in\partial\Omega$ we define the \emph{mean curvature of $\partial\Omega$} as
\[H(\sigma)=\divv^{\partial\Omega} X=\sum_{i=1}^{n-1}k_i(\sigma).\]
\end{defi}

\begin{oss}\label{ossimp}
Let $\Omega$ be a bounded open set of class $C^{1,1}$, let $U\subseteq\R^n$ be an open set containing $\partial\Omega$, let $X\in C^{0,1}(U;\R^n)$, and let $\phi(x)=x+tX(x)$. By direct computations, we have that
\[J^{\partial\Omega} \phi(\sigma)=1+t\divv^{\partial\Omega} X(\sigma)+t^2R(t,\sigma)\]
where the remainder $R$ is a bounded function. In particular, if $X$ is an extension of $\nu$, we have   
\[J^{\partial\Omega} \phi(\sigma)=1+t H(\sigma)+ t^2R(t,\sigma).\]
\end{oss}

\begin{teor}[Coarea formula]\label{coarea}
Let $f\in C^{0,1}(\R^n)$, let $g\in L^1(\R^n)$, and let $U\subset\R^n$ be an open set. Then
\[\int_U g(x)\abs{\nabla f(x)}\,dx=\int_{\R} \int_{U\cap\Set{f=t}} g(y)\,d\Hn(y)\,dt.\]
\end{teor}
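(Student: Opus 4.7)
The plan is the classical reduction-approximation scheme (see e.g.\ Federer or Evans--Gariepy). By linearity and monotone convergence, both sides of the identity are Borel measures in the variable $g$, so it suffices to treat $g = \chi_E$ for a Borel set $E \subset U$, i.e.\ to prove
\[
\int_E \abs{\nabla f(x)}\,dx = \int_\R \Hn\bigl(E \cap \Set{f = t}\bigr)\,dt.
\]

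First I would establish the identity when $f \in C^1(\R^n)$. On the critical set $N = \Set{\nabla f = 0}$ the left-hand side vanishes, and by Sard's theorem $f(N)$ has $\mathcal{L}^1$-measure zero, so the contribution of $N$ to the right-hand side vanishes as well. On $U \setminus N$ the implicit function theorem yields a countable open cover $\Set{B_k}$ on each of which there is a $C^1$ diffeomorphism $\Phi_k$ straightening the level sets of $f$ to the hyperplanes $\Set{y_n = t}$; on each $B_k$ the classical change of variables together with Fubini's theorem produces the formula, and a $C^1$ partition of unity glues the local identities into a global one.

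Second, I would pass from $C^1$ to $C^{0,1}$ by a Whitney-type approximation. By Rademacher's theorem and the standard Lusin approximation for Lipschitz maps, for every $\eta > 0$ there exists $f_\eta \in C^1(\R^n)$ such that $\Ln(\Set{f \ne f_\eta}) < \eta$, such that $\norma{\nabla f_\eta}_\infty \le C\,\mathrm{Lip}(f)$, and such that $\nabla f = \nabla f_\eta$ almost everywhere on $\Set{f = f_\eta}$. Applying the $C^1$ case to $f_\eta$ gives the coarea identity modulo error terms supported on $\Set{f \ne f_\eta}$; sending $\eta \to 0$ and controlling these error terms simultaneously on both sides closes the argument.

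The main obstacle is precisely the last control: one has to transfer the $\Ln$-smallness $\Ln(\Set{f \ne f_\eta}) < \eta$ into a bound of the form $\int_\R \Hn\bigl(\Set{f = t} \cap \Set{f \ne f_\eta}\bigr)\,dt \le C\eta$, which is a version of the coarea \emph{inequality} for Lipschitz functions. Establishing this inequality first, by running the $C^1$ argument on suitable smooth approximants together with an elementary covering lemma, is the true technical heart of the proof; once it is in hand, the equality then follows almost formally from the Whitney approximation step above.
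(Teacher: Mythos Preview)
Your outline is a faithful sketch of the classical Federer/Evans--Gariepy proof of the coarea formula, and the main steps (reduction to characteristic functions, the $C^1$ case via Sard and local straightening, Whitney--Lusin approximation to pass to Lipschitz, with the coarea \emph{inequality} as the key technical ingredient) are correctly identified. However, you should be aware that the paper does \emph{not} prove this statement at all: it is listed in the preliminaries section as a standard tool, with an implicit reference to \cite{maggi} at the start of that subsection. There is therefore no ``paper's own proof'' to compare against.

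So your proposal is not wrong, but it is misplaced effort in this context: you are supplying a (reasonable) proof sketch for a background theorem that the authors deliberately quote without proof. If you want to include it for completeness, what you have is an acceptable roadmap, though as written it remains a sketch rather than a proof---each of the three steps (especially the coarea inequality via the Eilenberg-type covering argument) would need to be fleshed out in detail to stand on its own.
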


\subsection{\texorpdfstring{$\Gamma$}{}-convergence}
In this section, we recall some basic properties of the $\Gamma$-convergence and the asymptotic development by $\Gamma$-convergence. We refer for instance to \cite{dalmaso} and \cite{anzellotti1993asymptotic} for the following notions.
\begin{defi}
Let $X$ be a metric space and, for any $\eps>0$, let us consider the functionals $\mathcal{F}_\eps,\mathcal{F}_0 : X\to \R\cup\set{+\infty}$. We will say that \emph{$\mathcal{F}_\eps$ $\Gamma$-converges, with respect to the strong topology in $X$, as $\eps\to 0^+$} to $\mathcal{F}_0$ if for every $x\in X$ the following conditions hold:
\begin{itemize}
 \item{for every sequence $\set{x_\eps}\subset X$ converging to $x$,
 \[\liminf_{\eps\to0^+}\mathcal{F}_\eps(x_\eps)\geq \mathcal{F}_0(x);\]}
 \item{there exists a sequence $\set{x_\eps}\subset X$ converging to $x$ such that \[\limsup_{\eps\to0^+}\mathcal{F}_\eps(x_\eps)\le \mathcal{F}_0(x).\]}
\end{itemize}
\end{defi}
In particular, from the definition, if $\mathcal{F}_\eps$ $\Gamma$-converges to $\mathcal{F}_0$, for every $x\in X$ there exists a recovery sequence $\set{x_\eps}\subset X$, converging to $x$, such that
\[\lim_{\eps\to0^+} \mathcal{F}_\eps(x_\eps)=\mathcal{F}_0(x).\]
We have the following
\begin{prop}\label{prop:convminimi}
Let $X$ be a metric space and, for any $\eps>0$, let us consider the functionals $\mathcal{F}_\eps,\mathcal{F}_0 : X\to\R\cup\set{+\infty}$ such that $\mathcal{F}_\eps$ $\Gamma$-converges, with respect to the strong topology in $X$ as $\eps\to 0^+$ to $\mathcal{F}_0$. Let $\set{x_\eps}$ be a sequence in $X$ such that \[\mathcal{F}_\eps(x_\eps)=\min_X \mathcal{F}_\eps.\]
If there exists $\overline{x}\in X$ such that $x_\eps$ converges to $\overline{x}$, then
\[\mathcal{F}_0(\overline{x})=\min_X \mathcal{F}_0=\lim_{\eps\to0^+}\min_X \mathcal{F}_\eps.\]
\end{prop}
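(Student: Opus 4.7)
The plan is to combine the two defining inequalities of $\Gamma$-convergence with the minimality of $x_\eps$. Concretely, I expect the liminf inequality applied to the given converging sequence $x_\eps\to\overline{x}$ to bound $\mathcal{F}_0(\overline{x})$ from below by $\liminf_\eps\min_X\mathcal{F}_\eps$, while a recovery sequence for an arbitrary competitor $y\in X$ will bound the same quantity from above by $\mathcal{F}_0(y)$, forcing $\overline{x}$ to be a minimizer of $\mathcal{F}_0$ and simultaneously pinning down the limit of the minimum values.

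First I would apply the liminf condition to the sequence $x_\eps\to\overline{x}$ to obtain
\[
\mathcal{F}_0(\overline{x})\leq \liminf_{\eps\to 0^+}\mathcal{F}_\eps(x_\eps)=\liminf_{\eps\to 0^+}\min_X\mathcal{F}_\eps.
\]
Then, for an arbitrary $y\in X$, I would invoke the existence of a recovery sequence $y_\eps\to y$ with $\limsup_{\eps\to 0^+}\mathcal{F}_\eps(y_\eps)\leq\mathcal{F}_0(y)$. Using the minimality $\mathcal{F}_\eps(x_\eps)\leq\mathcal{F}_\eps(y_\eps)$ for every $\eps$, the chain
\[
\mathcal{F}_0(\overline{x})\leq\liminf_{\eps\to 0^+}\mathcal{F}_\eps(x_\eps)\leq\limsup_{\eps\to 0^+}\mathcal{F}_\eps(x_\eps)\leq\limsup_{\eps\to 0^+}\mathcal{F}_\eps(y_\eps)\leq\mathcal{F}_0(y)
\]
holds. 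Since $y\in X$ is arbitrary, this shows $\mathcal{F}_0(\overline{x})=\min_X\mathcal{F}_0$.

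To conclude the identity for the limits of minima, I would specialize $y=\overline{x}$ in the chain above: the first and last terms then coincide, so all inequalities are equalities, and both the liminf and limsup of $\mathcal{F}_\eps(x_\eps)=\min_X\mathcal{F}_\eps$ are equal to $\mathcal{F}_0(\overline{x})=\min_X\mathcal{F}_0$. Hence the limit $\lim_{\eps\to 0^+}\min_X\mathcal{F}_\eps$ exists and equals $\min_X\mathcal{F}_0$.

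There is no serious obstacle: the argument is a purely formal manipulation of the liminf/limsup inequalities supplied by the definition of $\Gamma$-convergence, and the existence of the limit point $\overline{x}$ is assumed rather than derived, so no equicoercivity argument is needed. The only point requiring care is to build the recovery sequence for the \emph{competitor} $y$ (and not for $\overline{x}$), so that the minimality of $x_\eps$ transfers into the desired upper bound on $\mathcal{F}_0(\overline{x})$.
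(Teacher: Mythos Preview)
Your argument is correct and is exactly the standard proof of this fundamental property of $\Gamma$-convergence. The paper does not supply its own proof of this proposition (it is quoted as a known fact from \cite{dalmaso}), so there is nothing to compare against; your write-up would serve perfectly well as the omitted proof.
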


Let
\[
m_0=\inf_X \mathcal{F}_0,
\]
and, for every $x\in X$, let
\[
\delta\mathcal{F}_\eps(x)=\frac{\mathcal{F}_\eps(x)-m_0}{\eps}
\]
\begin{defi}\label{def:asymptotic}
If there exists a functional $\mathcal{F}^{(1)}\colon X\to \R\cup\set{+\infty}$ such that $\delta\mathcal{F}_\eps$ $\Gamma$-converges, with respect to the strong topology in $X$, as $\eps\to 0^+$ to $\mathcal{F}^{(1)}$, we say that $\mathcal{F}^{(1)}$ is the \emph{first-order asymptotic development by $\Gamma$-convergence} for the functional $\mathcal{F}_\eps$.
\end{defi}

Let
\[\mathcal{U}_0=\Set{x\in X|\mathcal{F}_0(x)=m_0},\]
the interest in the previous definition is justified by the following
\begin{oss}
\label{oss: approx}
Let $\set{x_\eps}$ be a sequence in $X$ such that \[\mathcal{F}_\eps(x_\eps)=\min_X \mathcal{F}_\eps,\]
and assume that there exists $\overline{x}\in X$ such that $x_\eps$ converges to $\overline{x}$; then, by \autoref{prop:convminimi}, we have that $\overline{x}\in\mathcal{U}_0$ and
\[\mathcal{F}^{(1)}(\overline{x})=\min_X \mathcal{F}^{(1)}=\lim_{\eps\to0^+}\dfrac{\mathcal{F}_\eps(x_\eps) - m_0}{\eps}.\]
In particular, we have
\[\mathcal{F}_\eps(x_\eps)=m_0+\eps \mathcal{F}^{(1)}(\bar{x})+o(\eps).\]
\end{oss}
\section{Stretching}
\label{stretching}
In this section, we construct the diffeomorphism 
$\Psi_\eps\colon\Sigma_1\to\Sigma_\eps$, and, assuming that the family of functions $\tilde{u}_\eps=u_\eps\circ\Psi_\eps$ converges weakly in $H^1(\Sigma_1)$ to a function $\tilde{u}_0$, we compute the limiting boundary value problem for the limit $\tilde{u}_0$.

\subsection{Preliminary computations}

Let $\Omega$ be a bounded open set with $C^{1,1}$ boundary. For every $x\in\Gamma_{d_0}$, we recall we have defined
\[
\nu_0(x)=\nu_0(\sigma(x))
\]
to be the unit outer normal to $\partial\Omega$ in $\sigma(x)$, and let $h\in C^{1,1}(\Gamma_{d_0})$ be a positive function such that $\nabla h\cdot \nu_0=0$, so that $h$ is constant along normal radii starting from $\partial\Omega$. We define
\[
\Sigma_\eps :=\Set{x\in \R^n | 0<d(x)<\eps h(x)},
\qquad \qquad 
\Omega_\eps=\overline{\Omega}\cup\Sigma_\eps.
\]
In particular, notice that, if $\eps\norma{h}_\infty<d_0$, we can write
\begin{equation}
\label{eq: Sigmaepsdef}
\Sigma_\eps=\Set{\sigma+t\nu_0(\sigma)|\begin{gathered}
\sigma\in\partial\Omega, \\
0<t<\eps h(\sigma)
\end{gathered}},
\end{equation}
and the representation $x=\sigma+t\nu_0$ is unique and $d\in C^{1,1}(\Sigma_\eps)$. 

For simplicity's sake, we will assume that $\norma{h}_\infty<d_0$ so that we can assume the distance function $d$ to be regular on the set $\Sigma_1$. 
\begin{defi}
\label{def: stretching}
We define the \emph{stretching diffeomorphism} $\Psi_\eps\in C^{0,1}(\Gamma_{d_0};\Gamma_{\eps d_0})$ as the function defined as
\[
\begin{split}
\Psi_\eps(z)&=\sigma(z)+\eps d(z)\nu_0(z) \\[5 pt] 
&=z+(\eps-1)d(z)\nu_0(z).
\end{split}
\]
\end{defi}
With this definition we have that 
\[\begin{split}
D\Psi_\eps(z)&=D\sigma(z)+\eps\left[\nu_0(z)\otimes\nu_0(z)+d(z) D\nu_0(z)\right]\\[5 pt]
&=\id_n+(\eps-1)\left[\nu_0(z)\otimes\nu_0(z)+d(z) D\nu_0(z)\right],
\end{split}
\]
where $\id_n$ is the identity matrix. Moreover, $\Psi_\eps$ is invertible with
 \[
    \Psi_\eps^{-1}(x)=\sigma(x)+\dfrac{d(x)}{\eps}\nu_0(x).
\]
By direct computations, we have the following

\begin{lemma}
\label{lem: areaformulasstretch}
    Let $\Omega\subset\R^n$ be a bounded, open set with $C^{1,1}$ boundary, and  fix a positive function $h\in C^{1,1}(\Gamma_{d_0})$ such that $\nabla h\cdot \nu_0=0$. Let $g:\R^n\to\R$ be a positive Borel function. Then
    \begin{equation}
    \label{eq: changeofvariablesdLn}
        \int_{\Sigma_{\eps}}g(x)\,dx=\eps\int_{\Sigma_1}g(\Psi_\eps(z))J_{\eps}(z)\,dz,
    \end{equation}
    with
    \[
        J_{\eps}(z)=\prod_{i=1}^{n-1}\dfrac{1+\eps d(z) k_i(\sigma)}{1+d(z) k_i(\sigma)},
    \]
    and $\sigma=\sigma(z)$. In addition,
    \begin{equation}
    \label{eq: changeofvariablesdHn}
        \int_{\partial\Omega_{\eps}}g(\xi)\,d\Hn(\xi)=\int_{\partial\Omega_1}g(\Psi_\eps(\zeta))J^\tau_{\eps}(\zeta)\,d\Hn(\zeta),
    \end{equation}
    where $J_\eps^\tau$ is the tangential Jacobian of $\Psi_\eps$, and it converges uniformly, as $\eps\to 0^+$, to
    \[
    \begin{split}
        J^\tau_{0}(\zeta)&=\frac{1}{\sqrt{1+\abs{\nabla h(\zeta)}^2}}\prod_{i=1}^{n-1}\frac{1}{1+h(\zeta)k_i(\sigma(\zeta))
        } \\[7 pt]
        &=\frac{1}{\sqrt{1+\abs{\nabla h(\zeta)}^2}} J_0(\zeta).
    \end{split}
    \]
\end{lemma}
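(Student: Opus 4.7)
The plan is to reduce the two identities to the standard change-of-variables formula in $\R^n$ for \eqref{eq: changeofvariablesdLn} and to the area formula on surfaces (\autoref{teor:area}) for \eqref{eq: changeofvariablesdHn}. In both cases, the core task reduces to computing a Jacobian of $\Psi_\eps$ in a basis adapted to the geometry of $\partial\Omega$.

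For \eqref{eq: changeofvariablesdLn}, I would differentiate the explicit expression $\Psi_\eps(z) = \sigma(z) + \eps\, d(z)\,\nu_0(z)$, using $\nabla d=\nu_0$, to obtain $D\Psi_\eps(z) = D\sigma(z) + \eps\,[\nu_0 \otimes \nu_0 + d(z)\, D\nu_0(z)]$, and then evaluate this on the orthonormal frame $\{\tau_1(\sigma),\dots,\tau_{n-1}(\sigma),\nu_0\}$ at $\sigma=\sigma(z)$. Since $\sigma$ and $\nu_0$ are constant along the normal rays emanating from $\partial\Omega$, both $D\sigma$ and $D\nu_0$ annihilate $\nu_0$, giving $D\Psi_\eps(z)\,\nu_0 = \eps\,\nu_0$. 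On the tangential directions, \autoref{oss: gammacurvatures} together with $D\nu_0(z)\tau_i = k_i(z)\tau_i$ from \autoref{def: curv} yields $D\Psi_\eps(z)\,\tau_i = \frac{1+\eps\, d(z)\,k_i(\sigma)}{1+d(z)\,k_i(\sigma)}\,\tau_i$. Thus $D\Psi_\eps$ is diagonal in this orthonormal basis with determinant $\eps\, J_\eps(z)$, and the usual change-of-variables formula gives \eqref{eq: changeofvariablesdLn}.

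For \eqref{eq: changeofvariablesdHn}, I would apply \autoref{teor:area} to $\Psi_\eps$ restricted to $\partial\Omega_1$. To identify the uniform limit of $J_\eps^\tau$ as $\eps\to 0^+$, it is convenient to introduce, for each $t \in [0,1]$, the map $\Phi_t\colon\partial\Omega\to\partial\Omega_t$ defined by $\Phi_t(\sigma)=\sigma + t\, h(\sigma)\,\nu_0(\sigma)$, so that $\Psi_\eps\circ\Phi_1=\Phi_\eps$. Comparing the two area-formula expressions for $\int_{\partial\Omega_\eps}g\,d\Hn$, one obtained by parametrizing through $\partial\Omega_1$ and the other through $\partial\Omega$, forces the multiplicative identity $J_\eps^\tau(\Phi_1(\sigma))\, J^{\partial\Omega}\Phi_1(\sigma) = J^{\partial\Omega}\Phi_\eps(\sigma)$. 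A direct computation gives $D^{\partial\Omega}\Phi_\eps(\sigma)\,\tau_j = (1+\eps\, h\, k_j(\sigma))\,\tau_j + \eps\,(\partial_j h)\,\nu_0$, and the matrix determinant lemma applied to its Gram matrix produces an explicit formula for $J^{\partial\Omega}\Phi_\eps$. Since $\Phi_\eps\to\id$ in $C^1(\partial\Omega)$, one has $J^{\partial\Omega}\Phi_\eps\to 1$ uniformly, whence $J_\eps^\tau \to 1/J^{\partial\Omega}\Phi_1$ uniformly on $\partial\Omega_1$; reading this limit through $h(\Phi_1(\sigma))=h(\sigma)$ and the eigenvalue rescaling of \autoref{def: curv} yields the stated $J_0^\tau$.

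The main technical obstacle is the careful bookkeeping in the frame $\{\tau_1,\dots,\tau_{n-1},\nu_0\}$: the same $\tau_i$ serve as eigenvectors for $D^2d$ at every $z$ along a normal ray, but with eigenvalues that rescale as in \autoref{def: curv}, and the various Jacobians (of $\Psi_\eps$ itself, of $\Phi_\eps$ and of $\Phi_1$) must be tracked simultaneously. Once this structure is exploited, the relevant determinants are either diagonal (for \eqref{eq: changeofvariablesdLn}) or a rank-one perturbation of a diagonal matrix (for \eqref{eq: changeofvariablesdHn}), and the uniform convergence follows from the $C^{1,1}$ regularity of $h$ and $\partial\Omega$.
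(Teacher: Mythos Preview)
Your treatment of \eqref{eq: changeofvariablesdLn} coincides with the paper's: both diagonalize $D\Psi_\eps$ in the frame $\{\tau_1,\dots,\tau_{n-1},\nu_0\}$ and read off the Jacobian.

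For \eqref{eq: changeofvariablesdHn} your argument is correct but genuinely different. The paper works directly on $\partial\Omega_1$: it observes that $D\Psi_\eps\to D\sigma$ uniformly, so $J_\eps^\tau\to J^{\partial\Omega_1}\sigma$, and then computes $J^{\partial\Omega_1}\sigma$ by constructing an explicit rotation $R$ sending $T_{\sigma(\zeta)}\partial\Omega$ onto $T_\zeta\partial\Omega_1$; in the rotated basis $\bar\tau_s=R\tau_s$, the map $D\sigma$ becomes a diagonal matrix times a simple rank-one factor, and the determinant is read off directly. Your route instead pulls everything back to $\partial\Omega$ via the factorisation $\Psi_\eps\circ\Phi_1=\Phi_\eps$ and the chain rule $J_\eps^\tau(\Phi_1(\sigma))\,J^{\partial\Omega}\Phi_1(\sigma)=J^{\partial\Omega}\Phi_\eps(\sigma)$; you then only need the Gram matrices of $D^{\partial\Omega}\Phi_t$ in the principal frame on $\partial\Omega$, which are diagonal plus rank-one, handled by the matrix determinant lemma. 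Your approach avoids building the rotation $R$, at the cost of invoking multiplicativity of surface Jacobians and a small algebraic identification: the quantity $\sum_j (\partial_j h(\sigma))^2/(1+h k_j(\sigma))^2$ appearing in $1/J^{\partial\Omega}\Phi_1$ equals $|\nabla h(\zeta)|^2$ at $\zeta=\Phi_1(\sigma)$ precisely because $\nabla h(z)\cdot\tau_i=(\nabla h(\sigma)\cdot\tau_i)/(1+d(z)k_i(\sigma))$ along normal rays. You allude to this in the last step (``eigenvalue rescaling of \autoref{def: curv}''); making it explicit would close the argument cleanly.
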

\begin{proof}
    Let $z\in\Gamma_{d_0}$, and let $\tau_i(z)$ be the eigenvectors of $D^2d(z)=D\nu_0(z)$, with respective eigenvectors $k_i(z)$, defined in \autoref{def: curv}. We have that $\set{\tau_1(\sigma),\dots,\tau_{n-1}(\sigma),\nu_0(\sigma)}$ is a basis of eigenvectors for $D\Psi_\eps(z)$, and, in particular,
\[
\begin{split}
D\Psi_\eps(z)\,\tau_i(z)&=\left(1+\dfrac{(\eps-1)d(z)k_i(\sigma)}{1+d(z)k_i(\sigma)}\right)\tau_i(z)\\[5 pt]
&=\dfrac{1+\eps d(z)k_i(\sigma)}{1+d(z)k_i(\sigma)}\tau_i(z),
\end{split}
 \] 
 and
 \[
 D\Psi_\eps(z)\,\nu_0(z)=\eps\nu_0(z).
 \]
Therefore, \eqref{eq: changeofvariablesdLn} follows from the area formula.
\medskip

On the other hand, notice that $D\Psi_\eps$ converges uniformly, as $\eps\to 0^+$, to $D\sigma$,
so that $J^{\partial\Omega_1} \Psi_\eps$ converges to $J^{\partial\Omega_1} \sigma$. Therefore, we apply the area formula on surfaces (\autoref{teor:area}) to get \eqref{eq: changeofvariablesdHn}, and it is left to compute the tangential Jacobian $J^{\partial\Omega_1}\sigma$. 
\medskip

Let $\zeta\in\partial\Omega_1$, and let $\nu_1$ be the unit outer normal to $\partial\Omega_1$. Recalling that $\nabla d=\nu_0$, the definition of $\Sigma_1$ ensures that
\[
\nu_1(\zeta)=\dfrac{\nu_0(\zeta)-\nabla h(\zeta)}{\sqrt{1+\abs{\nabla h(\zeta)}^2}}.
\]

We aim to construct an orthonormal basis for the tangent space $T_\zeta\partial \Omega_1$ with a rotation of the tangent space $T_{\sigma(\zeta)}\partial \Omega$. In the following, when possible, we will drop the dependence on $\zeta$. Let us define the rotation operator
\[
R:\R^n \to \R^n
\]
as the unique linear operator having the following properties:
\begin{enumerate}[label=(\roman*)]
\item if $w\in (\nabla h)^\perp\cap\nu_0^\perp$, then $Rw=w$;
\item $R \nu_0 =\nu_1$;
\item $\nabla h/\abs{\nabla h}$ is mapped to a unit vector laying in the plane generated by $\nabla h$ and $\nu_0$, and orthogonal to $\nu_1$, namely 
\[
R \gradh=\frac{1}{\sqrt{1+\abs{\nabla h}^2}}\biggl(\frac{\nabla h}{\abs{\nabla h}}+ \abs{\nabla h}\nu_0\biggr).
\]
\end{enumerate}
Explicitly, we can write $R$ as
\[
\begin{split}
R=&\id_n-\nu_0\otimes\nu_0-\frac{\nabla h}{\abs{\nabla h}}\otimes\frac{\nabla h}{\abs{\nabla h}}+\nu_1\otimes \nu_0\\[5 pt]
&+\frac{1}{\sqrt{1+\abs{\nabla h}^2}}\left(\frac{\nabla h}{\abs{\nabla h}} +\abs{\nabla h}\nu_0\right)\otimes\frac{\nabla h}{\abs{\nabla h}}.
\end{split}
\]
This operator is a rotation in $\R^n$ that maps $T_{\sigma(\zeta)}\partial\Omega$ onto $T_{\zeta}\partial\Omega_1$, and, in particular, we define an orthonormal basis $\bar\tau_i$ of $T_\zeta\partial\Omega_1$ as follows: let $\tau_i=\tau_i(\sigma(\zeta))$ be an orthonormal basis  of $T_{\sigma(\zeta)}\partial\Omega$, and we define
\[
\bar \tau_s:=R\tau_s=\left(\id_n+\left(\frac{1}{\sqrt{1+\abs{\nabla h}^2}}-1\right)\gradh\otimes\gradh\right)\tau_s+\frac{\nabla h\cdot \tau_s}{\sqrt{1+\abs{\nabla h}^2}}\,\nu_0.
\]
In particular, since $D\sigma \,\nu_0=0$, then
\[
(D\sigma \circ R)\tau_s=D\sigma \left(\id_n+\left(\frac{1}{\sqrt{1+\abs{\nabla h}^2}}-1\right)\gradh\otimes\gradh\right)\tau_s.
\]
Hence, recalling the evaluation of the eigenvalues of $D\sigma$ given in \autoref{oss: gammacurvatures}, we can easily compute the determinant of the tangential gradient
\[
\begin{split}
J^{\partial\Omega_1} \sigma(\zeta)=\left(\prod_{i=1}^{n-1}(1+h(\zeta)k_i(\sigma(\zeta)))\sqrt{1+\abs{\nabla h(\zeta)}^2}\right)^{-1},
\end{split}
\]
thus concluding the proof.

\end{proof}
\subsection{The main result}
Fix $\beta>0$. For every $0<\eps<1$ let $u_\eps\in H^1(\Sigma_\eps)$ be a solution to 
\begin{equation}\label{eq:eqeps}
    \eps\int_{\Sigma_\eps}\nabla u_\eps \nabla\varphi \,dx+\beta\int_{\partial\Omega_\eps}u_\eps\varphi\,d\Hn=0,
\end{equation}
for every $\varphi\in H^1(\Sigma_\eps)$ such that $\varphi=0$ on $\partial\Omega$. Then we have the following.
\begin{prop}
\label{prop: weaklim}
Let $\Omega\subset\R^n$ be a bounded, open set with $C^{1,1}$ boundary, and  fix a positive function $h\in C^{1,1}(\Gamma_{d_0})$ such that $\nabla h\cdot \nu_0=0$. Let $u_\eps$ be a family of weak solution to \eqref{eq:eqeps}, and let $\tilde{u}_{\eps}(z)=u_\eps(\Psi_\eps(z))$. If $\tilde{u}_\eps$ converges weakly in $H^1(\Sigma_1)$ to a function $\tilde{u}_0$, then $\tilde{u}_0$ is a solution to
\begin{equation}\label{eqlimweak}
\int_{\Sigma_1} (\nabla\tilde{u}_0\cdot\nu_0)(\nabla\varphi\cdot\nu_0)J_0(z)\,dz+\beta\int_{\partial\Omega_1} \tilde{u}_0(\zeta)\varphi(\zeta)\dfrac{J_0(\zeta)}{\sqrt{1+\abs{\nabla h(\zeta)}^2}}\,d\Hn(\zeta)=0. \end{equation}
for every $\varphi\in H^1(\Sigma_1)$ such that $\varphi=0$ on $\partial\Omega$.
\end{prop}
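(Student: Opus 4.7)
The strategy is to pull equation \eqref{eq:eqeps} back to the fixed domain $\Sigma_1$ via $\Psi_\eps$ and then pass to the limit as $\eps\to 0^+$. Given an arbitrary test function $\varphi\in H^1(\Sigma_1)$ with $\varphi=0$ on $\partial\Omega$, the composition $\varphi_\eps=\varphi\circ\Psi_\eps^{-1}\in H^1(\Sigma_\eps)$ is admissible in \eqref{eq:eqeps}, because $\Psi_\eps$ restricts to the identity on $\partial\Omega$ and hence preserves the vanishing condition. By the chain rule $\nabla u_\eps(\Psi_\eps(z))=(D\Psi_\eps(z))^{-T}\nabla\tilde u_\eps(z)$, and analogously for $\varphi_\eps$; applying the change-of-variables formulas \eqref{eq: changeofvariablesdLn} and \eqref{eq: changeofvariablesdHn} transforms \eqref{eq:eqeps} into
\[
\eps^{2}\int_{\Sigma_1}M_\eps\nabla\tilde u_\eps\cdot\nabla\varphi\, J_\eps(z)\,dz+\beta\int_{\partial\Omega_1}\tilde u_\eps\varphi\, J_\eps^{\tau}(\zeta)\,d\Hn=0,
\]
with $M_\eps=(D\Psi_\eps)^{-1}(D\Psi_\eps)^{-T}$. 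Since $\nu_0=\nabla d$, the matrix $D\nu_0=D^2 d$ is symmetric, which makes $D\Psi_\eps$ symmetric and ensures that $\{\tau_1(\sigma),\dots,\tau_{n-1}(\sigma),\nu_0\}$ diagonalises $M_\eps$ as well.

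The crucial step is the asymptotic behaviour of $\eps^2 M_\eps$. From the eigenvalue computation already carried out in the proof of \autoref{lem: areaformulasstretch}, $D\Psi_\eps$ has eigenvalues $(1+\eps d(z)k_i(\sigma))/(1+d(z)k_i(\sigma))$ along $\tau_i$ and $\eps$ along $\nu_0$, so that $\eps^2 M_\eps$ has eigenvalues
\[
\biggl(\frac{\eps(1+d(z)k_i(\sigma))}{1+\eps d(z)k_i(\sigma)}\biggr)^{\!2}\xrightarrow[\eps\to 0^{+}]{}0\quad\text{on each }\tau_i,
\]
and eigenvalue $1$ on $\nu_0$. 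Thanks to the $C^{1,1}$-regularity of $\partial\Omega$, this convergence is uniform on $\overline{\Sigma_1}$; combined with the uniform convergence $J_\eps\to J_0$ from \autoref{lem: areaformulasstretch}, one obtains $\eps^2 M_\eps J_\eps\to(\nu_0\otimes\nu_0)J_0$ uniformly on $\overline{\Sigma_1}$. The lemma also supplies the uniform convergence $J_\eps^\tau\to J_0^\tau=J_0/\sqrt{1+\abs{\nabla h}^2}$.

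To pass to the limit in the pulled-back equation, write $\eps^2 M_\eps J_\eps=(\nu_0\otimes\nu_0)J_0+R_\eps$ with $\norma{R_\eps}_\infty\to 0$; the volume integral splits as
\[
\int_{\Sigma_1}(\nabla\tilde u_\eps\cdot\nu_0)(\nabla\varphi\cdot\nu_0)J_0\,dz+\int_{\Sigma_1}R_\eps\nabla\tilde u_\eps\cdot\nabla\varphi\,dz.
\]
The first summand converges to the corresponding expression with $\tilde u_0$ by testing the weakly $L^2$-convergent sequence $\nabla\tilde u_\eps$ against the fixed vector field $(\nabla\varphi\cdot\nu_0)J_0\nu_0$; the second vanishes because $\norma{R_\eps}_\infty\to 0$ and $\tilde u_\eps$ is bounded in $H^1(\Sigma_1)$. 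For the boundary term, the compactness of the trace operator $H^1(\Sigma_1)\hookrightarrow L^2(\partial\Omega_1)$ upgrades weak $H^1$ convergence of $\tilde u_\eps$ to strong $L^2$ convergence of traces, which together with the uniform convergence of $J_\eps^\tau$ yields the claimed boundary integral. I expect the main obstacle to be the careful bookkeeping in the spectral step — verifying the symmetry of $D\Psi_\eps$, its common eigenbasis with $D^2 d$, and the resulting rank-one degeneration of the quadratic form as $\eps\to 0^+$ — because weak convergence alone is not enough to handle the tangential component without exploiting the uniform rate $O(\eps^2)$ at which those directions are suppressed.
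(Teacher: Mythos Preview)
Your argument is correct and matches the paper's proof essentially step for step: pull back to $\Sigma_1$ with $\varphi_\eps=\varphi\circ\Psi_\eps^{-1}$, diagonalise $D\Psi_\eps$ in the frame $\{\tau_1,\dots,\tau_{n-1},\nu_0\}$, observe that the tangential contributions carry a uniform $O(\eps^2)$ factor, and pass to the limit using the uniform convergence of $J_\eps$, $J_\eps^\tau$ together with the assumed weak $H^1$ convergence of $\tilde u_\eps$. Your packaging via $M_\eps$ and the remainder $R_\eps$, and your explicit mention of trace compactness for the boundary term, simply spell out what the paper compresses into ``passing to the limit, the assertion follows.''
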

\begin{proof}
By definition, we have $\tilde{u}_\eps\in H^1(\Sigma_1)$ and
\begin{equation}
    \label{eq: nablaueps}
\begin{split}
\nabla u_\eps|_{\Psi_\eps(z)}&=D(\Psi_\eps^{-1})|_{\Psi_\eps(z)} \nabla \tilde{u}_\eps(z)\\[10 pt]
&=\left(\dfrac{1}{\eps}\nabla\tilde{u}_\eps(z)\cdot\nu_0(z)\right)\,\nu_0(z)+\sum_{i=1}^{n-1}\left(\dfrac{1+d(z)k_i(\sigma)}{1+\eps d(z)k_i(\sigma)} \nabla \tilde{u}_\eps(z)\cdot\tau_i(z)\right)\,\tau_i(z).
\end{split}
\end{equation}

Let $\varphi\in H^1(\Sigma_1)$ with $\varphi=0$ on $\partial\Omega$, and let $\varphi_\eps(x)=\varphi(\Psi_\eps^{-1}(x))$, then equation \eqref{eq:eqeps} yields
\[
\eps\int_{\Sigma_\eps}\nabla u_\eps \nabla\varphi_\eps \,dx+\beta\int_{\partial\Omega_\eps}u_\eps\varphi_\eps\,d\Hn=0,
\]
from which, using \autoref{lem: areaformulasstretch} and the computation \eqref{eq: nablaueps}, we have
\begin{equation}
\label{eq: weakeqeps}
\begin{split}
\int_{\Sigma_1}\left[ (\nabla\tilde{u}_\eps\cdot\nu_0)(\nabla\varphi\cdot\nu_0)+ \eps^2\sum_{i=1}^{n-1} \left(\dfrac{1+d k_i}{1+\eps d k_i}\right)^2 (\nabla \tilde{u}_\eps\cdot\tau_i)(\nabla\varphi\cdot\tau_i)\right]\prod_{i=1}^{n-1}\dfrac{1+\eps d k_i}{1+d k_i}\,dz+\\[15 pt]
+\beta\int_{\partial\Omega_1}\tilde{u}_\eps\varphi J^\tau_\eps\,d\Hn=0,
\end{split}
\end{equation}
Passing to the limit in \eqref{eq: weakeqeps}, the assertion follows.
\end{proof}

\begin{oss}[Uniqueness]
    For every given Dirichlet boundary condition on $\partial\Omega$, \eqref{eqlimweak} admits a unique solution. Indeed, let $v_1,v_2\in H^1(\Sigma_1)$ be two solutions to \eqref{eqlimweak} such that $v_1=v_2$ on $\partial\Omega$, and let $w=v_1-v_2$. By linearity, we have that $w$ is a solution to \eqref{eqlimweak} with $w=0$ on $\partial\Omega$, so that we have
\[\int_{\Sigma_1} \abs{\nabla w\cdot\nu_0}^2 J_0\,dz+\beta\int_{\partial\Omega_1} w^2\dfrac{J_0}{\sqrt{1+\abs{\nabla h}^2}}\,d\Hn=0,\]
and since $J_0>0$, then $\nabla w\cdot\nu_0=0$ a.e. on $\Sigma_\eps$. Then, for $\Ln$-a.e. $z\in\Sigma_1$,
\[w(z)=w(\sigma(z))+\int_0^{d(z)} \nabla w(\sigma(z)+t\nu_0)\cdot \nu_0\,dt=0,\]
 that is $w=0$ and $v_1=v_2$.
\end{oss}
\begin{oss}[Limit computation]
\label{oss: explsol}
We point out that it is possible to explicitly compute the solution $\tilde{u}_0$ to \eqref{eqlimweak} in terms of its values on $\partial\Omega$ as 
\[
\tilde{u}_0(z)=\tilde{u}_0(\sigma(z))\left(1-\frac{\beta d(z)}{1+\beta h(z)}\right).
\] 
We first inspect the regular case of $\Omega$ of class $C^3$ to obtain the strong equation, and then we work on the general case using only the weak equation \eqref{eqlimweak}.

Indeed, let 
\[A(z)=J_0(z)\nu_0(z)\otimes\nu_0(z),\]
and notice that when $\Omega$ is smooth, then $J_0$ is also smooth as it can be seen as the determinant of the smooth matrix-valued map
\[z\in\Sigma_1\mapsto D\sigma+\nu_0\otimes\nu_0.\]
Then, if $\tilde{u}_0$ is a regular solution to \eqref{eqlimweak}, it is a solution to
\begin{equation}\label{eqlimit0}
\begin{cases}
    \divv\left(A(x)\nabla\tilde{u}_0\right)=0 &\text{in }\Sigma_1,\\[5 pt]
    \dfrac{\partial \tilde{u}_0}{\partial\nu_0} +\beta\tilde{u}_0=0&\text{on }\partial\Omega_1.
\end{cases}
\end{equation}
Moreover, we have that
 \[\divv(J_0\nu_0)=\nabla J_0\cdot \nu_0+J_0\Tr(D\nu_0).\]
In particular, we can explicitly compute the derivative of $J_0$ in direction $\nu_0$ using the local representation
\[J_0(z)=\prod_{i=1}^{n-1}\frac{1}{1+d(z)k_i(\sigma(z))},\]
and recalling that $k_i\circ \sigma$ is constant along normal radii, so that, for every $\zeta\in\partial\Omega_1$,
\[\nabla J_0(\zeta)\cdot \nu_0(\zeta)=-J_0(\zeta)\Tr(D\nu_0(\zeta)).\]
Hence, $\divv(J_0\nu_0)=0$, and
\[\divv(A\nabla u)=J_0\nabla(\nabla u\cdot \nu_0)\cdot\nu_0.\]

Therefore, equation \eqref{eqlimit0} reduces to 
\begin{equation}\label{eqlimit}
\begin{cases}
    \nabla(\nabla \tilde{u}_0\cdot \nu_0)\cdot\nu_0=0 &\text{in }\Sigma_1,\\[5 pt]
    \dfrac{\partial \tilde{u}_0}{\partial\nu_0} +\beta\tilde{u}_0=0&\text{on }\partial\Omega_1.
\end{cases}
\end{equation}
The previous computation suggests that solutions to \eqref{eqlimweak} have to be linear with respect to the normal direction. Indeed, Since $h$ is constant along the normal direction, it is sufficient to check that, for every $w\in H^1(\Sigma_1)$ the function
\[\tilde{u}(z)=w(\sigma(z))\left(1-\dfrac{\beta d(z)}{1+\beta h(z)}\right)\]
is a solution to \eqref{eqlimit} in $H^1(\Sigma_1)$. Finally, we can show that the previous solution to \eqref{eqlimit}  is the solution to \eqref{eqlimweak} also in the case in which $\Omega$ is only $C^{1,1}$. By a change of variables and coarea formula (see \autoref{itformulas}) we can rewrite the integral on $\Sigma_1$ as 
\[\int_{\Sigma_1} g(z)J_0(z)\,dz=\int_{\partial\Omega}\int_0^{h(\sigma)} g(\sigma+t\nu_0)\,dt\,d\Hn(\sigma)\]
and the integral on $\partial\Omega_1$ as
\[\int_{\partial\Omega_1} g(\xi) \dfrac{J_0(\xi)}{\sqrt{1+\abs{\nabla h}^2}}\,d\Hn(\xi)=\int_{\partial\Omega} g(\sigma+h(\sigma)\nu_0)\,d\Hn(\sigma).\]
Then by direct computation, we have that

\[
\nabla \tilde{u}\cdot\nu_0=-\dfrac{\beta w(\sigma(z))}{1+\beta h(z)}
\]
so that, for every smooth function $\varphi\in H^1(\Sigma_1)$ with $\varphi=0$ on $\partial\Omega$,
\[\begin{split}
    \int_{\Sigma_1} (\nabla\tilde{u}_0\cdot\nu_0)(\nabla\varphi\cdot\nu_0)J_0(z)\,dz&=-\int_{\partial\Omega}\dfrac{\beta w(\sigma}{1+\beta h}\int_0^{h(\sigma)} \dfrac{d}{dt}(\varphi(\sigma+t\nu_0))\,dt\,d\Hn(\sigma)\\[15 pt]
&=-\int_{\partial\Omega}\dfrac{\beta w(\sigma}{1+\beta h}\int_0^{h(\sigma)}\varphi(\sigma+h(\sigma)\nu_0)\,d\Hn\\[15 pt]
&=-\beta\int_{\partial\Omega}\tilde{u}_0(\sigma+h(\sigma)\nu_0)\varphi(\sigma+h(\sigma)\nu_0)\,d\Hn\\[15 pt]
    &=-\beta\int_{\partial\Omega_1} \tilde{u}_0(\zeta)\varphi(\zeta)\dfrac{J_0(\zeta)}{\sqrt{1+\abs{\nabla h(\zeta)}^2}}\,d\Hn(\zeta)
\end{split}\]
and $\tilde{u}$ is in fact a solution to \eqref{eqlimweak}.
\end{oss}
\subsection{Proof of \autoref{teor: tildeu}}
Let $f\in L^2(\Omega)$ be a non-negative function and consider the functional
\[
\mathcal{F}_\eps(v,h)=
\begin{cases}
    \displaystyle \int_{\Omega}\abs{\nabla v}^2\,dx+\eps\int_{\Sigma_\eps} \abs{\nabla v}^2\,dx+\beta\int_{\partial\Omega_\eps} v^2\,d\Hn-2\int_{\Omega}fv\,dx &\text{if }v\in H^1(\Omega_\eps),\\[10 pt]
    +\infty &\text{if } v\in L^2(\R^n)\setminus H^1(\Omega_\eps),
\end{cases}
\]
and let
\[
\mathcal{F}_0(v,h)=\begin{cases}\displaystyle \int_{\Omega}\abs{\nabla v}^2\,dx+\beta\int_{\partial\Omega} \dfrac{v^2}{1+\beta h}\,d\Hn-2\int_{\Omega}fv\,dx &\text{if }v\in H^1(\Omega), \\[10 pt]
+\infty &\text{if }v\in L^2(\R^n)\setminus H^1(\Omega),
\end{cases}\]
in \cite{depiniscatro} the authors prove the following
\begin{teor}
Let $\Omega\subset\R^n$ be a bounded, open set with $C^{1,1}$ boundary, and  fix a positive Lipschitz function $h\colon\partial\Omega\to\R$. Then $\mathcal{F}_\eps(\cdot,h)$ $\Gamma$-converges in the strong $L^2(\R^n)$ topology, as $\eps\to0^+$, to $\mathcal{F}_0(\cdot,h)$.
\end{teor}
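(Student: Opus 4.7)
The plan is to establish the two $\Gamma$-convergence conditions separately: first the liminf inequality on strongly $L^2$-converging sequences, then the construction of an explicit recovery sequence built from the linear profile identified in \autoref{oss: explsol}.

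For the liminf inequality, take a sequence $v_\eps \to v$ in $L^2(\R^n)$ with $\liminf_{\eps\to 0^+} \mathcal{F}_\eps(v_\eps) < \infty$, and pass to a subsequence realizing the $\liminf$ with uniformly bounded energies. The bound on $\int_\Omega \abs{\nabla v_\eps}^2\,dx$ forces $v\vert_\Omega \in H^1(\Omega)$ and $v_\eps\vert_\Omega \rightharpoonup v\vert_\Omega$ weakly in $H^1(\Omega)$, so by compactness of the trace operator on the $C^{1,1}$ domain $\Omega$ one gets strong convergence $v_\eps\vert_{\partial\Omega}\to v\vert_{\partial\Omega}$ in $L^2(\partial\Omega)$. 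The bulk gradient term is handled by weak lower semicontinuity, and the linear term $-2\int_\Omega f v_\eps\,dx$ is continuous in $L^2$. The heart of the argument is a pointwise trace inequality in the layer: writing, for $\Hn$-a.e.\ $\sigma\in\partial\Omega$,
\[
v_\eps(\sigma) = v_\eps(\sigma+\eps h(\sigma)\nu_0) - \int_0^{\eps h(\sigma)}\partial_{\nu_0}v_\eps(\sigma+s\nu_0)\,ds,
\]
squaring, and applying Young's inequality with the specific parameter $\eta = \beta h(\sigma)$ together with Cauchy--Schwarz on the integral, one obtains
\[
\frac{\beta\, v_\eps(\sigma)^2}{1+\beta h(\sigma)} \leq \beta\, v_\eps(\sigma+\eps h(\sigma)\nu_0)^2 + \eps\int_0^{\eps h(\sigma)}\abs{\nabla v_\eps(\sigma+s\nu_0)}^2\,ds.
\]
Integrating over $\partial\Omega$ and translating the right-hand side via the area formula applied to the map $\sigma\mapsto \sigma+\eps h(\sigma)\nu_0(\sigma)$, whose tangential Jacobian equals $1+\eps h H + O(\eps^2)$ by \autoref{ossimp}, together with the coarea-style foliation of $\Sigma_\eps$ by level sets of $d$ (whose normal Jacobian $\prod_i(1+s k_i)$ tends uniformly to $1$), yields
\[
\int_{\partial\Omega}\frac{\beta v_\eps^2}{1+\beta h}\,d\Hn \leq (1+o(1))\left[\beta\int_{\partial\Omega_\eps} v_\eps^2\,d\Hn + \eps\int_{\Sigma_\eps}\abs{\nabla v_\eps}^2\,dx\right].
\]
Taking the $\liminf$ and using the strong convergence of traces closes the liminf inequality.

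For the limsup, the only non-trivial case is $v\in H^1(\Omega)$. Guided by \autoref{oss: explsol}, I propose the recovery sequence
\[
v_\eps(x) = \begin{cases} v(x) & x\in\Omega,\\[4pt] v(\sigma(x))\left(1 - \dfrac{\beta\, d(x)}{\eps(1+\beta h(\sigma(x)))}\right) & x\in\Sigma_\eps. \end{cases}
\]
I first work with $v\in C^\infty(\overline\Omega)$: continuity at $\partial\Omega$ gives $v_\eps\in H^1(\Omega_\eps)$, and $v_\eps\to v$ in $L^2(\R^n)$ since $\abs{\Sigma_\eps}\to 0$. A direct computation gives $\partial_{\nu_0} v_\eps = -\beta v(\sigma)/[\eps(1+\beta h)]$ up to terms uniformly bounded of order one, while the tangential derivatives of $v_\eps$ stay uniformly bounded on $\Sigma_\eps$. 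Hence
\[
\eps\int_{\Sigma_\eps}\abs{\nabla v_\eps}^2\,dx \longrightarrow \int_{\partial\Omega}\frac{\beta^2 h\, v^2}{(1+\beta h)^2}\,d\Hn,
\]
the tangential part vanishing as $O(\eps^2)$. The Robin term gives $v_\eps = v(\sigma)/(1+\beta h)$ on $\partial\Omega_\eps$, and, via the tangential Jacobian tending to $1$, its boundary integral converges to $\beta\int_{\partial\Omega} v^2/(1+\beta h)^2\,d\Hn$. These two layer contributions sum exactly to $\beta\int_{\partial\Omega} v^2/(1+\beta h)\,d\Hn$, so $\mathcal{F}_\eps(v_\eps)\to \mathcal{F}_0(v)$. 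I then extend to general $v\in H^1(\Omega)$ by a standard density/diagonal argument using the continuity of $\mathcal{F}_0$ on $H^1(\Omega)$.

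The principal obstacle lies in the liminf step: the constants must align so that the surface penalty $\beta\int_{\partial\Omega_\eps} v_\eps^2\,d\Hn$ and the layer Dirichlet energy $\eps\int_{\Sigma_\eps}\abs{\nabla v_\eps}^2\,dx$ reconstruct precisely the weighted boundary term with weight $\beta/(1+\beta h)$. The Young parameter $\eta = \beta h(\sigma)$ is forced by this matching condition; any other choice leaves uncontrolled residuals. A secondary technical point is the limited regularity of $v$ in the recovery sequence, since tangential differentiation of $v\circ\sigma$ inside $\Sigma_\eps$ would a priori require more than $H^1$ regularity of the trace; this is resolved by first constructing the recovery sequence for smooth $v$ and then invoking density of $C^\infty(\overline\Omega)$ in $H^1(\Omega)$.
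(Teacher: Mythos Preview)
The paper does not actually prove this theorem: it is merely quoted from \cite{depiniscatro}, with no argument supplied. Your proposal therefore cannot be compared against a proof in the paper, but it is a correct self-contained argument, and it is in fact compatible with the techniques the paper deploys elsewhere.

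Your liminf step is sound. The pointwise inequality obtained from Young with the specific choice $\eta=\beta h(\sigma)$ is exactly what is needed to make the two layer contributions collapse onto $\beta/(1+\beta h)$; the Jacobian corrections are uniformly $1+O(\eps)$ by \autoref{ossimp} and \eqref{eq:intsigma}--\eqref{eq:intdesigma}, so the $(1+o(1))$ factor is independent of the sequence and disappears in the limit. The only point to make explicit is that boundedness of $\int_\Omega f v_\eps$ (needed to extract a uniform bound on the positive part of the energy) follows from the assumed strong $L^2$-convergence of $v_\eps$, which you use implicitly.

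Your recovery sequence is precisely the function $\varphi_\eps$ that the paper itself uses later in the $\Gamma$-limsup part of the proof of \autoref{teorema1}, and your computation of the two limiting layer terms and their sum $\beta^2 h/(1+\beta h)^2+\beta/(1+\beta h)^2=\beta/(1+\beta h)$ is correct. The regularity issue you flag (tangential differentiation of $v\circ\sigma$) is genuine and is properly handled by your smooth approximation plus diagonal argument; note that the paper, when it uses the same $\varphi_\eps$, applies it only with $v=u_0$, which has extra regularity, whereas here you need the density step.
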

For every $\eps>0$, let $u_{\eps,h}=u_\eps\in H^1(\Omega_\eps)$ be the minimizer of $\mathcal{F}_\eps(\cdot,h)$, then $u_\eps$ is a solution to the following boundary value problem
\[\begin{cases}-\Delta u_\eps= f & \text{in }\Omega,\\[3 pt]
u_\eps^-=u_\eps^+  & \text{on }\partial\Omega, \\[3 pt]
\dfrac{\partial u_\eps^-}{\partial \nu_0\hphantom{\scriptstyle{-}}}=\eps\dfrac{\partial u_\eps^+}{\partial \nu_0\hphantom{\scriptstyle{+}}} & \text{on }\partial\Omega, \\[6 pt]
\Delta u_\eps=0 & \text{in } \Sigma_\eps, \\[3 pt]
\eps\dfrac{\partial u_\eps}{\partial \nu_\eps} +\beta u_\eps=0 & \text{on } \partial \Omega_\eps,
\end{cases}\]
where $u_\eps^-$ and $u_\eps^+$ denote the trace on $u_\eps$ from the inside $\Omega$ and from the outside of $\Omega$ respectively. By the properties of $\Gamma$-convergence, we have that 
\begin{equation}
\label{eq: weakconvergence}
u_\eps \xrightharpoonup{H^1(\Omega)}u_0,
\end{equation}
where $u_0=u_{0,h}$ is the minimizer of $\mathcal{F}_0(\cdot,h)$. Namely, $u_0$ is the solution to
\[\begin{cases}
    -\Delta u_0=f &\text{in }\Omega,\\[5 pt]
    \dfrac{\partial u_0}{\partial \nu_0}+\dfrac{\beta}{1+\beta h} u_0=0 &\text{on }\partial\Omega.
\end{cases}\]

We now prove the weak convergence of the family of functions $\tilde{u}_\eps$ defined in \autoref{teorema1}. The proof revolves around some energy estimates analogous to the one proved in \cite{BCF} for the solutions to a similar boundary value problem with a transmission condition. 
\begin{teor}
\label{teor: C11energy}
Let $\Omega\subset\R^n$ be a bounded, open set with $C^{1,1}$ boundary, and  fix a positive function $h\in C^{1,1}(\Gamma_{d_0})$ and $\nabla h\cdot \nu_0=0$. Then there exists positive constants $\eps_0(\Omega)$, and $C(\Omega,h,\beta,f)$ such that if
\begin{equation}
    \eps\norma{h}_{C^{0,1}}\le \eps_0
\end{equation}
and $u_\eps$ is the weak solution to \eqref{problema}, then
\begin{equation}
        \label{eq: H2estforuappr}
        \int_\Omega \abs{D^2 u_\eps}^2\,dx+\eps\int_{\Sigma_\eps} \abs{D^2 u_\eps}^2\,dx+\beta\int_{\partial\Omega_\eps} \abs{\nabla^{\partial\Omega_\eps} u_\eps}^2\,d\Hn\le C.
    \end{equation}
\end{teor}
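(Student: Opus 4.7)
My plan is to follow the Rellich/Bochner approach used in \cite{BCF} for reinforcement problems, adapted to the diffraction setting, by applying the identity separately on $\Omega$ and on $\Sigma_\eps$ and then combining the two contributions via the transmission conditions on $\partial\Omega$. The starting point is, for a bounded $C^{1,1}$ open set $D$ and $u\in H^2(D)$, a Bochner-type identity of the form
\begin{equation*}
\int_D |D^2 u|^2\,dx = \int_D (\Delta u)^2\,dx + \int_{\partial D}\big[\,2(\partial_\nu u)\,\divv^{\partial D}(\nabla^{\partial D}u) - H|\nabla^{\partial D} u|^2 + H(\partial_\nu u)^2\,\big]\,d\Hn,
\end{equation*}
obtained from $\tfrac12\Delta|\nabla u|^2 = |D^2 u|^2 + \nabla u\cdot\nabla\Delta u$ together with the surface identity $\Delta u = \partial_{\nu\nu} u + H\partial_\nu u + \divv^{\partial D}(\nabla^{\partial D}u)$ and integration by parts. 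Applying this on $D=\Omega$ (where $-\Delta u_\eps=f$, producing a bulk term $\int_\Omega f^2$) and on $D=\Sigma_\eps$ (where $\Delta u_\eps=0$, so the bulk term vanishes), the task reduces to controlling the surface integrals on $\partial\Omega$ and $\partial\Omega_\eps$.

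On the outer boundary $\partial\Omega_\eps$, I would exploit the Robin condition $\eps\,\partial_{\nu_\eps}u_\eps=-\beta u_\eps$. Differentiating tangentially gives $\nabla^{\partial\Omega_\eps}(\partial_{\nu_\eps}u_\eps)=-(\beta/\eps)\nabla^{\partial\Omega_\eps}u_\eps$, which, substituted into the cross term $2(\partial_{\nu_\eps}u_\eps)\divv^{\partial\Omega_\eps}(\nabla^{\partial\Omega_\eps}u_\eps)$ after a further integration by parts on $\partial\Omega_\eps$, transforms it into the positive coercive quantity $(2\beta/\eps)|\nabla^{\partial\Omega_\eps}u_\eps|^2$. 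Multiplying the Bochner identity on $\Sigma_\eps$ by $\eps$, this produces exactly the $\beta\int_{\partial\Omega_\eps}|\nabla^{\partial\Omega_\eps}u_\eps|^2\,d\Hn$ contribution in \eqref{eq: H2estforuappr}. The remaining curvature terms on $\partial\Omega_\eps$ are uniformly controlled because the smallness $\eps\norma{h}_{C^{0,1}}\le\eps_0$ ensures that $\partial\Omega_\eps$ is a $C^{1,1}$ hypersurface with principal curvatures bounded uniformly in $\eps$.

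The surface integrals on the interface $\partial\Omega$ are the main obstacle. The Bochner identity on $\Omega$ produces terms in $(\partial_{\nu_0}u_\eps^-,\nabla^{\partial\Omega}u_\eps^-)$, while the $\eps$-weighted identity on $\Sigma_\eps$ produces analogous terms in $(\partial_{\nu_0}u_\eps^+,\nabla^{\partial\Omega}u_\eps^+)$ with opposite orientation of the outer normal. The transmission conditions $u_\eps^-=u_\eps^+$ and $\partial_{\nu_0}u_\eps^-=\eps\,\partial_{\nu_0}u_\eps^+$ force the tangential gradients to coincide and give a precise relation between the normal derivatives; when the two identities are added, the cross terms $2(\partial_{\nu_0}u_\eps^\pm)\divv^{\partial\Omega}(\nabla^{\partial\Omega}u_\eps^\pm)$ telescope, and the $H(\partial_{\nu_0}u_\eps^\pm)^2$ contributions combine into $(\eps-\eps^2)H(\partial_{\nu_0}u_\eps^+)^2$, a term that can be absorbed into the left-hand side using the smallness of $\eps$.

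Finally, the lower-order quantities ($\int_\Omega|\nabla u_\eps|^2$, $\eps\int_{\Sigma_\eps}|\nabla u_\eps|^2$, $\int_{\partial\Omega_\eps}u_\eps^2$) are controlled by the basic $H^1$-energy estimate obtained by testing the weak formulation of \eqref{problema} with $u_\eps$ itself, combined with Young's inequality and the trace inequality $\norma{v}_{L^2(\partial D)}^2\le\delta\norma{D v}_{L^2(D)}^2+C_\delta\norma{v}_{L^2(D)}^2$, applied with small $\delta$ to absorb any residual $D^2u_\eps$ contribution on the left-hand side. The central technical difficulty throughout is the bookkeeping of $\eps$ powers at the interface: the transmission condition makes $\partial_{\nu_0}u_\eps^+$ potentially of order $\eps^{-1}$ larger than $\partial_{\nu_0}u_\eps^-$, and it is precisely the $\eps$ weighting of the $\Sigma_\eps$ integral in \eqref{eq: H2estforuappr} together with the smallness $\eps\norma{h}_{C^{0,1}}\le\eps_0$ that prevent this asymmetry from destroying the estimate.
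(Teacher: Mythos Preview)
Your Rellich/Bochner-identity strategy is genuinely different from the paper's proof, which instead flattens both $\partial\Omega$ and $\partial\Omega_\eps$ locally and runs the classical Nirenberg tangential-difference-quotient argument on the transformed weak equation. Unfortunately, the Bochner route as you outline it does not close, for two concrete and related reasons tied to the thin-layer scaling.

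\textbf{The Robin cross term has the wrong sign for coercivity.} With your identity the boundary contribution on $\partial\Omega_\eps$ sits on the \emph{right-hand side}. After integrating by parts and using $\nabla^{\partial\Omega_\eps}(\partial_{\nu_\eps}u_\eps)=-(\beta/\eps)\nabla^{\partial\Omega_\eps}u_\eps$, the cross term becomes $+2\beta\int_{\partial\Omega_\eps}\abs{\nabla^{\partial\Omega_\eps}u_\eps}^2$ on the right, so the identity reads
\[
\eps\int_{\Sigma_\eps}\abs{D^2 u_\eps}^2 \;-\; 2\beta\int_{\partial\Omega_\eps}\abs{\nabla^{\partial\Omega_\eps}u_\eps}^2 \;=\;\text{(other terms)}.
\]
This is a difference, not a sum; it does not produce the coercive quantity you claim. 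To absorb the Robin term you would need a trace inequality on the thin domain $\Sigma_\eps$, but there $\int_{\partial\Omega_\eps}\abs{\nabla u_\eps}^2\le \delta\int_{\Sigma_\eps}\abs{D^2u_\eps}^2+C_{\delta,\eps}\int_{\Sigma_\eps}\abs{\nabla u_\eps}^2$ with $C_{\delta,\eps}\sim 1/\delta+1/\eps$. Since absorption into $\eps\int_{\Sigma_\eps}\abs{D^2u_\eps}^2$ forces $\delta\sim\eps$, and since only $\eps\int_{\Sigma_\eps}\abs{\nabla u_\eps}^2\le C$ is available, the lower-order remainder is $O(1/\eps^2)$ and the argument blows up.

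\textbf{The interface curvature term is not absorbable.} The leftover $(\eps^2-\eps)H\int_{\partial\Omega}(\partial_{\nu_0}u_\eps^+)^2$ is of size $\eps\int_{\partial\Omega}(\partial_{\nu_0}u_\eps^+)^2$. But the transmission condition gives $\partial_{\nu_0}u_\eps^+\sim\eps^{-1}\partial_{\nu_0}u_\eps^-$, and the expected profile $u_\eps\approx u_0(\sigma)\bigl(1-\tfrac{\beta d}{\eps(1+\beta h)}\bigr)$ in $\Sigma_\eps$ shows $\partial_{\nu_0}u_\eps^+=O(1/\eps)$; hence this term is $O(1/\eps)$, not $o(1)$. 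No trace inequality on $\Omega$ or on $\Sigma_\eps$ rescues it, because in either case the constants scale badly exactly as above. So ``absorbed into the left-hand side using the smallness of $\eps$'' is not correct.

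The paper's difference-quotient method sidesteps both obstacles: testing the full weak formulation \eqref{eq: weakeq} with $\varphi=-\Delta_k^{-\eta}(\xi^2\Delta_k^\eta v)$ (tangential $k$) never produces any interface term at all, since $\varphi$ lies in $H^1(\Omega_\eps)$ across $\partial\Omega$ and the transmission conditions are already built into the weak equation; and the Robin boundary integral $\beta\int v\varphi\,J_\eps$ becomes, after a discrete integration by parts, $\beta\int J_\eps^\eta\,\xi^2(\Delta_k^\eta v)^2$ plus lower order, which has the \emph{correct} sign and yields the boundary coercivity directly. The normal--normal derivative is then recovered pointwise from the equation.
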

\begin{oss}
    We want to point out that the assumption
    \[
        \nabla h(x)\cdot \nu_0(x) = 0 \qquad \forall x\in\Gamma_{d_0}
    \]
    is not necessary to prove \autoref{teor: C11energy}, but it makes the computations easier.
    \end{oss}   
For simplicity's sake, we postpone the technicalities of the proof of \autoref{teor: C11energy} to \autoref{EE}, and we directly prove \autoref{teor: tildeu}. Our aim is to have uniform $H^1$ estimates for $\tilde{u}_\eps$, so we first show an immediate consequence of \autoref{teor: C11energy}. For every $x\in\Gamma_{d_0}$, we denote by
\[
\nabla^{\partial\Omega}u_\eps(x)=\nabla {u}_\eps(x)-\left(\nabla {u}_\eps(x)\cdot\nu_0(x)\right)\nu_0(x),
\]
and we have the following.
\begin{cor}
\label{teor:esttan}
    There exists a positive constant $C=C(\Omega, h, \beta, f)$ such that 
    \[
    \int_{\Sigma_\eps} \abs{\nabla ^{\partial\Omega} u_\eps}^2\,dx\le \eps C.
    \]
\end{cor}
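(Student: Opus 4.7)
\textbf{Proof proposal for \autoref{teor:esttan}.} The plan is to propagate the tangential gradient from $\partial\Omega$ into the thin layer along the normal radii, controlling the error via the $H^2$-bulk estimate already furnished by \autoref{teor: C11energy}. Since $\nu_0$ is constant along every normal ray starting from $\sigma\in\partial\Omega$, the tangent space $T_{\sigma(x)}\partial\Omega$ does not vary with $t$ along such a ray, so that for $x = \sigma+t\nu_0(\sigma)$ we have
\[
\frac{d}{dt}\nabla^{\partial\Omega}u_\eps(\sigma+t\nu_0) = D^2u_\eps(\sigma+t\nu_0)\nu_0 - \bigl(D^2u_\eps(\sigma+t\nu_0)\nu_0\cdot\nu_0\bigr)\nu_0,
\]
whose norm is bounded by $|D^2u_\eps|$. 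The fundamental theorem of calculus then gives, for every $t\in(0,\eps h(\sigma))$,
\[
|\nabla^{\partial\Omega}u_\eps(\sigma+t\nu_0)|^2 \le 2|\nabla^{\partial\Omega}u_\eps(\sigma)|^2 + 2t\int_0^t |D^2u_\eps(\sigma+s\nu_0)|^2\,ds.
\]

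Next I would integrate in $t\in(0,\eps h(\sigma))$ and then in $\sigma\in\partial\Omega$, using the change of variables from $(\sigma,t)$-coordinates to $x\in\Sigma_\eps$, whose Jacobian $\prod_i(1+tk_i(\sigma))$ is uniformly bounded for $\eps\le\eps_0$. After an elementary Fubini computation on the double integral, this yields
\[
\int_{\Sigma_\eps}|\nabla^{\partial\Omega}u_\eps|^2\,dx \le C_1\,\eps\,\|h\|_\infty\int_{\partial\Omega}|\nabla^{\partial\Omega}u_\eps|^2\,d\Hn + C_2\,\eps^2\,\|h\|_\infty^2\int_{\Sigma_\eps}|D^2u_\eps|^2\,dx.
\]

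To finish it remains to bound each term by $\eps$ times a constant. The second term is immediate: by \autoref{teor: C11energy} we have $\eps\int_{\Sigma_\eps}|D^2u_\eps|^2\le C$, so the second summand is $\le C\eps\|h\|_\infty^2$. For the first term I would invoke the trace theorem on $\Omega$: since \autoref{teor: C11energy} bounds $\int_\Omega |D^2u_\eps|^2$, combined with the straightforward $H^1(\Omega)$-bound on $u_\eps$ coming from $\mathcal{F}_\eps(u_\eps,h)\le\mathcal{F}_\eps(0,h)+2\int_\Omega fu_\eps$, we get $\|u_\eps\|_{H^2(\Omega)}\le C$, hence $\|\nabla^{\partial\Omega}u_\eps\|_{L^2(\partial\Omega)}\le\|\nabla u_\eps\|_{H^{1/2}(\partial\Omega)}\le C$. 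The first summand is therefore $\le C\eps\|h\|_\infty$, and adding the two estimates gives the claim.

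The proof is essentially a Hardy/Poincaré-type argument in the thin direction, and no step presents a genuine obstacle; the only thing to check carefully is the Jacobian of $(\sigma,t)\mapsto\sigma+t\nu_0(\sigma)$, which stays uniformly bounded because $\|h\|_\infty<d_0$ and $\eps<1$, so that the radii stay strictly inside $\Gamma_{d_0}$ where $d\in C^{1,1}$.
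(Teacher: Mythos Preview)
Your argument is correct, and the overall mechanism (integrate the tangential gradient along normal rays and control the derivative by $|D^2 u_\eps|$, then use \autoref{teor: C11energy}) is the same as in the paper. The one genuine difference is the choice of anchor for the fundamental-theorem-of-calculus step: you propagate from the \emph{inner} boundary $\partial\Omega$ outward, whereas the paper propagates from the \emph{outer} boundary $\partial\Omega_\eps$ inward. Consequently, the paper bounds the starting data by the surface term $\beta\int_{\partial\Omega_\eps}|\nabla^{\partial\Omega_\eps}u_\eps|^2\,d\Hn$ supplied directly by \autoref{teor: C11energy}, at the cost of first proving the pointwise inequality $|\nabla^{\partial\Omega_\eps}u_\eps|^2\ge (1-\eps^2)\,|\nabla^{\partial\Omega}u_\eps|^2$ on $\partial\Omega_\eps$. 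You instead bound the starting data $\int_{\partial\Omega}|\nabla^{\partial\Omega}u_\eps|^2\,d\Hn$ via the trace theorem applied to $\nabla u_\eps\in H^1(\Omega)$, using the $\Omega$-bulk part of \autoref{teor: C11energy}. Your route is arguably cleaner in that it avoids the algebraic comparison between the two tangential projections, but it relies on the fact that the tangential part of the trace of $\nabla u_\eps$ on $\partial\Omega$ is the same from both sides (which follows from $u_\eps\in H^1(\Omega_\eps)$ and $u_\eps\in H^2$ on each side), a point worth making explicit. Either way, the resulting estimate is identical.
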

\begin{proof}
    We start by proving that on $\partial\Omega_\eps$
\begin{equation}
\label{eq: tanggrad}
   \abs{\nabla^{\partial\Omega_\eps} u_\eps}^2\ge (1-\eps^2)\abs{\nabla^{\partial\Omega} u_\eps}^2.
\end{equation}
Indeed, for every $\xi\in\partial\Omega_\eps$, if $\abs{\nabla h(\xi)}=0$, then $\nu_0=\nu_\eps$ and the inequality is trivially true; on the other hand, if $\abs{\nabla h(\xi)}\ne0$ we have that  
\[
    \abs{\nabla^{\partial\Omega_\eps} u_\eps}^2=\abs{\nabla u_\eps}^2-\abs{\nabla u_\eps\cdot \nu_\eps}^2,
\]
and for every $\eta>0$
\[\abs{\nabla u_\eps\cdot \nu_\eps}^2\le \dfrac{(1+\eta)\abs{\nabla u_\eps\cdot\nu_0}^2+\left(1+\dfrac{1}{\eta}\right)\eps^2 \abs{\nabla u_\eps\cdot \nabla h}^2}{1+\abs{\nabla h}^2}.\]
Moreover $\nabla h\cdot\nu_0=0$, so that
\[
\abs{\nabla u_\eps\cdot \nabla h}^2=\abs{\nabla^{\partial\Omega} u_\eps\cdot \nabla h}^2\le(\abs{\nabla u_\eps}^2-\abs{\nabla u_\eps\cdot\nu_0}^2)\abs{\nabla h}^2.
\]
Therefore, for every $\eta>0$
\[\abs{\nabla^{\partial\Omega_\eps} u_\eps}^2\ge\left(1-\dfrac{\left(1+\dfrac{1}{\eta}\right)\eps^2\abs{\nabla h}^2}{1+\abs{\nabla h}^2}\right)\abs{\nabla u_\eps}^2-\dfrac{1+\eta-\left(1+\dfrac{1}{\eta}\right)\eps^2\abs{\nabla h}^2}{1+\abs{\nabla h}^2}\abs{\nabla u_\eps\cdot\nu_0}^2,\]
finally, letting $\eta=\abs{\nabla h}^2$, we have 
\[\abs{\nabla^{\partial\Omega_\eps} u_\eps}^2\ge(1-\eps^2)(\abs{\nabla u_\eps}^2-\abs{\nabla u_\eps\cdot\nu_0}^2)=(1-\eps^2)\abs{\nabla^{\partial\Omega} u_\eps}^2.\]

Then, by \autoref{teor: C11energy} and \eqref{eq: tanggrad}, we have that
\begin{equation}
    \label{eq: energest}
    \eps\int_{\Sigma_\eps}\abs{D^2 u_\eps}^2\,dx+\beta\int_{\partial\Omega_\eps} \abs{\nabla^{\partial\Omega} u_\eps}^2\,d\Hn\le C.
\end{equation}
For $x\in\Sigma_\eps$ denote by 
\[
\xi(x)=\sigma(x)+\eps h(x)\nu_0(x)\in\partial\Omega_\eps,
\]
so that for $\Ln$-a.e. $x\in\Sigma_\eps$, we have that
\[\nabla^{\partial\Omega} u_\eps(x)=\nabla^{\partial\Omega} u_\eps(\xi(x))-\int_{d(x)}^{\eps h(x)}\dfrac{d}{dt}\left(\nabla^{\partial\Omega} u_\eps(\sigma(x)+t\nu_0(x))\right)\,dt.\]
Hence,
\[\abs{\nabla^{\partial\Omega} u_\eps}(x)^2\le C(h)\left(\abs{\nabla^{\partial\Omega} u_\eps}^2(\xi(x))+\eps\int_0^{\eps h(x)}\abs{D^2 u_\eps}^2(\sigma(x)+t\nu_0(x))\,dt\right),\]
and integrating over $\Sigma_\eps$, using \eqref{eqivint1} and \eqref{eqivint2}, and \eqref{eq: energest}, we get
\[\begin{split}\int_{\Sigma_\eps}\abs{\nabla^{\partial\Omega} u_\eps}^2(x)\,dx\le& \,C\int_{\partial\Omega} \int_0^{\eps h(\sigma)} \abs{\nabla^{\partial\Omega} u_\eps}^2(\sigma+\eps h(\sigma)\nu_0)\,ds\,d\Hn(\sigma)+\\[15 pt]
&+C\eps\int_{\partial\Omega}\int_0^{\eps h(\sigma)}\int_0^{\eps h(\sigma)}\abs{D^2 u_\eps}^2(\sigma(x)+t\nu_0(x))\,dt\,ds\,d\Hn(\sigma)\\[15 pt]
\le&\,\eps C\left(\int_{\partial\Omega_\eps} \abs{\nabla^{\partial\Omega} u_\eps}^2\,d\Hn+\eps\int_{\Sigma_\eps}\abs{D^2 u_\eps}^2\,dx\right)\\[15 pt]
\le&\,\eps C.\end{split}\]
\end{proof}

We are now in a position to prove the $H^1$ convergence of the family $\tilde{u}_\eps$.
\begin{proof}[Proof of \autoref{teor: tildeu}]
We recall that we are assuming without loss of generality $\norma{h}_\infty<d_0$, so that $d\in C^{1,1}(\Sigma_1)$. To prove the equiboundedness in $H^1(\Omega_1)$, we decompose $\nabla \tilde{u}_\eps$ into its normal part \[
(\nabla \tilde{u}_\eps\cdot \nu_0 )\,\nu_0,
\]
and its tangential part
\[
\nabla^{\partial\Omega} \tilde{u}_\eps:=\nabla \tilde{u}_\eps-\left(\nabla \tilde{u}_\eps\cdot\nu_0\right)\nu_0.
\]    
Using \autoref{lem: areaformulasstretch}, since $J_\eps$ and $J_\eps^\tau$ are equibounded, we can find a positive constant $C=C(\Omega,h)$ such that
\begin{equation}
\label{eq: energy1}
\int_{\Sigma_1}\abs{\nabla^{\partial\Omega}\tilde{u}_\eps}^2\,dz\le \dfrac{C}{\eps}\int_{\Sigma_\eps}\abs{\nabla^{\partial\Omega} u_\eps}^2\,dx,
\end{equation}
\begin{equation}
\label{eq: energy2}
\int_{\Sigma_1}\abs{\nabla \tilde{u}_\eps\cdot\nu_0}^2\,dz\le\eps C\int_{\Sigma_\eps} \abs{\nabla u_\eps}^2\,dx,    
\end{equation}
and
\begin{equation}
\label{eq: energy3}
\int_{\partial\Omega_1}\tilde{u}_\eps^2\,d\Hn\le C\int_{\partial\Omega_\eps}u_\eps^2\,d\Hn.
\end{equation}
Moreover, by the weak convergence of $u_\eps$ in $H^1(\Omega)$, $u_\eps$ are equibounded in $L^2(\Omega)$, while by the minimality
\begin{equation}\label{boundtraceps}\begin{split}
\int_\Omega \abs{\nabla u_\eps}^2\,dx+\eps\int_{\Sigma_\eps} \abs{\nabla u_\eps}^2\,dx+\beta\int_{\partial\Omega_\eps} u_\eps^2\,d\Hn &\le \mathcal{F}_\eps(0)+2\int_\Omega f u_\eps\,dx\\[7 pt] &
\le \int_\Omega (f^2+u_\eps^2)\,dx\\[7 pt] &\le C
\end{split}\end{equation}
Joining the inequalities \eqref{eq: energy1}, \eqref{eq: energy2}, \eqref{eq: energy3}, and \eqref{boundtraceps} with the energy estimates in \autoref{teor:esttan}, we have that for some positive constant $C(\Omega,h,\beta)$
\[\int_{\Omega_1}\abs{\nabla \tilde{u}_\eps}^2\,dz+\beta\int_{\partial\Omega_1}\tilde{u}_\eps^2\,d\Hn\le C.\]
By Poincaré's inequality with boundary term, we have that $\tilde{u}_\eps$ are equibounded in $H^1(\Omega_1)$. Therefore, up to a subsequence, $\tilde{u}_\eps$ converges weakly in $H^1(\Omega_1)$ to some function $\tilde u_{0}$. In particular, by the weak convergence \eqref{eq: weakconvergence} inside $\Omega$, we have $\tilde{u}_0=u_0$ in $\Omega$. On the other hand, in $\Sigma_1$, using \autoref{prop: weaklim}, we get that $\tilde{u}_0$ is a solution to \eqref{eqlimweak}, so that \autoref{oss: explsol} ensures that
\[
\tilde{u}_0(z)=\tilde{u}_0(\sigma(z))\left(1-\dfrac{\beta d(z)}{1+\beta h(z)}\right).
\]
Finally, since $\tilde{u}_0\in H^1(\Omega_1)$, then it cannot jump across $\partial\Omega$, so that, since $\tilde{u}_0=u_0$ in $\Omega$, we necessarily have that $\tilde{u}_0(\sigma(z))=u_0(\sigma(z))$ for a.e. $z\in\Sigma_1$, and the theorem is proved.
\end{proof}

\section{Asymptotic Development}\label{Gamma}

In this section, we study the first-order asymptotic development by $\Gamma$-convergence of the functional $\mathcal{F}_\eps(\cdot,h)$. Let us recall the notation introduced in \autoref{sec: distcurv}
\[
\gamma_t=\partial(\Omega\cup\Gamma_t)=\Set{x\in\R^n|d(x)=t}\setminus\Omega.
\]
\begin{oss}\label{itformulas}
Using the coarea formula (\autoref{coarea}) with the distance function $d$, we have that for every $g\in L^1(\Omega_\eps)$ 
\[ \int_{\Sigma_\eps}g(x)\,dx=\int_0^{+\infty}\int_{\gamma_t} g(\xi)\,\chi_{\Sigma_\eps}\!(\xi)\,d\Hn(\xi)\,dt.\]
Let 
\[
\phi_t\colon x\in\Gamma_{d_0}\mapsto x+t\nu_0(x)\in\Gamma_{td_0},
\]
then $\gamma_t=\phi_t(\partial\Omega)$, and by the area formula on surfaces (\autoref{teor:area})
\[\begin{split}\int_{\Sigma_\eps}g(x)\,dx&=\int_0^{+\infty}\int_{\partial\Omega} g(\sigma+t\nu_0)\chi_{\Sigma_\eps}(\sigma+t\nu_0) J^\tau\phi_t(\sigma)\,d\Hn(\sigma)\,dt\\[10 pt]
&=\int_{\partial\Omega} \int_0^{\eps h(\sigma)}g(\sigma+t\nu_0)\prod_{i=1}^{n-1}(1+tk_i(\sigma))\,dt\,d\Hn(\sigma).
\end{split}\]
Similarly,
\[
\int_{\partial\Omega_\eps}g(\xi)\,d\Hn(\xi)=\int_{\partial\Omega} g(\sigma+\eps h\nu_0)\prod_{i=1}^{n-1}(1+\eps h(\sigma) k_i(\sigma))\sqrt{1+\eps^2\abs{\nabla h}^2}\,d\Hn(\sigma).
\]
so that we have
\begin{equation}\label{eq:intsigma} \int_{\Sigma_\eps}g(x)\,dx=\int_{\partial\Omega}\int_0^{\eps h(\sigma)} g(\sigma+t\nu_0)\left(1+tH(\sigma)+\eps^2 R_1(\sigma,t,\eps)\right)\,dt\,d\Hn\end{equation}
and 
\begin{equation}\label{eq:intdesigma} \int_{\partial\Omega_\eps}g(\sigma)\,d\Hn=\int_{\partial\Omega} g(\sigma+\eps h\nu_0)\left(1+\eps h(\sigma)H(\sigma)+\eps^2 R_2(\sigma,\eps)\right)\,d\Hn,\end{equation}
where the remainder terms $R_1$ and $R_2$ are bounded functions. In other words, there exists $Q=Q(\Omega,h)>0$ such that $\abs{R_1},\abs{R_2}\le Q$.

In particular, notice that there exists a positive constant $C=C(\Omega,\norma{h}_{0,1})$ such that for every $0<\eps<1$
\begin{equation}\label{eqivint1}\dfrac{1}{C}\int_{\Sigma_\eps} g\,dx\le\int_{\partial\Omega}\int_0^{\eps h(\sigma)} g(\sigma+t\nu_0)\,dt\,d\Hn\le C\int_{\Sigma_\eps}g\,dx,\end{equation}
and
\begin{equation}\label{eqivint2}\dfrac{1}{C}\int_{\partial\Omega_\eps} g\,dx\le\int_{\partial\Omega} g(\sigma+\eps h\nu_0)\,d\Hn\le C\int_{\partial\Omega_\eps}g\,dx.\end{equation}
\end{oss}

We can now follow the lead of \cite{optimalthin} to prove \autoref{teorema1}.

 \begin{proof}[Proof of \autoref{teorema1}]
 We start by proving the $\Gamma$-liminf inequality: 
without loss of generality, we can prove the inequality for the sequence of minimizers $u_\eps$. Here we recall the definitions of $\mathcal{F}_\eps$ and $\mathcal{F}_0$, omitting the dependence on $h$.
 \begin{equation}
 \label{eq: defFeps}
 \mathcal{F}_\eps(u_\eps)=\int_\Omega \abs{\nabla u_\eps}^2\,dx+\eps\int_{\Sigma_\eps} \abs{\nabla u_\eps}^2\,dx+\beta\int_{\partial\Omega_\eps} u_\eps^2\,d\Hn-2\int_\Omega fu_\eps\,dx,
 \end{equation}
 \begin{equation}
  \label{eq: defF0}
 \mathcal{F}_0(u_0)=\int_\Omega \abs{\nabla u_0}^2\,dx+\beta\int_{\partial\Omega} \dfrac{u_0^2}{1+\beta h}\,d\Hn-2\int_\Omega fu_0\,dx.
 \end{equation} 
 Moreover, notice that, by minimality of $u_0$,
 \begin{equation}
     \label{eq: Fe-F0}
     \mathcal{F}_\eps(u_\eps)-\mathcal{F}_0(u_0)\ge \eps\int_{\Sigma_\eps} \abs{\nabla u_\eps}^2\,dx+\beta\int_{\partial\Omega_\eps} u_\eps^2\,d\Hn-\beta\int_{\partial\Omega} \frac{u_\eps^2}{1+\beta h}\,d\Hn
 \end{equation}
 By \eqref{eq:intsigma} and \eqref{eq:intdesigma} we have \begin{equation}\label{cov1}\int_{\Sigma_\eps}\abs{\nabla u_\eps}^2\,dx\ge\int_{\partial\Omega}\int_0^{\eps h(\sigma)} \abs{\nabla u_\eps(\sigma+t\nu_0)}^2\left(1+tH(\sigma)-\eps^2 Q\right)\,dt\,d\Hn\end{equation}
and
\begin{equation}\label{cov2}\dfrac{\beta}{\eps}\int_{\partial\Omega_\eps} u_\eps^2\,\Hn\ge\dfrac{\beta}{\eps}\int_{\partial\Omega} u_\eps^2(\sigma+\eps h(\sigma)\nu_0(\sigma))\left(1+\eps h(\sigma)H(\sigma)-\eps^2 Q\right)\,d\Hn.\end{equation}
We choose $\eps$ sufficiently small, so that for every $\sigma\in\partial\Omega$, and $0<t<\eps h(\sigma)$, we have that $1+tH(\sigma)>0$, and, in particular, using Holder's inequality and integrating by parts,
\[\begin{split}\int_0^{\eps h} \abs{\nabla u_\eps(\sigma+t\nu_0)}^2\left(1+tH\right)\,dt&\ge\dfrac{1}{\eps h}\!\left(\int_0^{\eps h} \abs{\nabla u_\eps(\sigma+t\nu_0)}\sqrt{1+tH}\,dt\right)^2\\[10pt]
&\ge\dfrac{1}{\eps h}\!\left(\int_0^{\eps h}\dfrac{d}{dt}( u_\eps(\sigma+t\nu_0))\sqrt{1+tH}\,dt\right)^2\\[10 pt]
&\ge\frac{1}{\eps h}\!\left(u_\eps(\sigma+\eps h\nu_0)\sqrt{1+\eps h H}-\!\left(u_\eps(\sigma)+\!\!\displaystyle\int_0^{\eps h}\dfrac{H u_\eps(\sigma+t\nu_0)}{2\sqrt{1+tH}}\,dt\right)\right)^{\!2}\!.
\end{split}\]
We then apply Young's inequality to the absolute value of the double product, having that for every $\lambda>0$
\begin{equation}
\label{eq: lowergrad}
\begin{split}\int_0^{\eps h(\sigma)} \abs{\nabla u_\eps(\sigma+t\nu_0)}^2\left(1+tH(\sigma)\right)\,dt\ge& \dfrac{(1-\lambda)(1+\eps h H)u_\eps(\sigma+\eps h\nu_0)^2}{\eps h}\\[10 pt]&+\dfrac{1}{\eps h}\left(1-\dfrac{1}{\lambda}\right)\left(u_\eps(\sigma)+\int_0^{\eps h(\sigma)}\dfrac{H u_\eps(\sigma+t\nu_0)}{2\sqrt{1+tH}}\,dt\right)^2. \end{split}
\end{equation}
We then have, joining \eqref{eq: Fe-F0}, \eqref{cov1}, \eqref{eq: lowergrad}, and \eqref{cov2},
\begin{equation}
    \label{4asterischi}
    \begin{split}
        \delta\mathcal{F}_\eps(u_\eps)=&\dfrac{\mathcal{F}_\eps(u_\eps)-\mathcal{F}_0(u_0)}{\eps}\\[7 pt] 
        \ge&\int_{\partial\Omega}\dfrac{1}{\eps h(\sigma)}(1-\lambda+\beta h)(1+\eps hH)u_\eps^2(\sigma+\eps h\nu_0)\,d\Hn\\[10 pt]%
        &+\int_{\partial\Omega}\dfrac{1}{\eps h}\left(\left(1-\dfrac{1}{\lambda}\right)\left(u_\eps(\sigma)+\int_0^{\eps h}\dfrac{H u_\eps(\sigma+t\nu_0)}{2\sqrt{1+tH}}\,dt\right)^2-\dfrac{\beta h \, u_\eps^2(\sigma)}{1+\beta h}\right)\,d\Hn-Q\eps R(\eps,u_\eps)
    \end{split}
\end{equation} where, if $\eps$ is small enough, 
\[\begin{split}R(\eps,u_\eps)&=\eps\int_{\partial\Omega}\int_0^{\eps h(\sigma)} \abs{\nabla u_\eps(\sigma+t\nu_0)}^2\,d\Hn+\beta\int_{\partial\Omega} u_\eps(\sigma+\eps h(\sigma)\nu_0(\sigma))^2\,d\Hn\\[10 pt]
&\le C\int_\Omega fu_\eps\,dx<C.
\end{split}\]
Letting $\lambda=\lambda(\sigma)=1+\beta h(\sigma)$ in \eqref{4asterischi}, and using the inequality $(a+b)^2\ge a^2+2ab$, joint with the fact that $1-\lambda^{-1}>0$,
\[
    \delta\mathcal{F}_\eps(u_\eps)\ge\int_{\partial\Omega}\dfrac{\beta H u_\eps(\sigma)}{\eps (1+\beta h)}\int_0^{\eps h(\sigma)}\dfrac{u_\eps(\sigma+t\nu_0)}{\sqrt{1+tH}}\,dt\,d\Hn+O(\eps).
\]
Moreover, for every $t\in(0,\eps \norma{h}_\infty)$ we have that $(1+tH)^{-1/2}=1+O(\eps)$, so that
\begin{equation}
\label{stimaliminf}
    \delta\mathcal{F}_\eps(u_\eps)\ge\beta \int_{\partial\Omega}\dfrac{ H u_\eps(\sigma)}{(1+\beta h)}\int_0^{h(\sigma)}\tilde{u}_\eps(\sigma+t\nu_0)\,dt\,d\Hn+O(\eps).
\end{equation}
Finally, by \autoref{teor: tildeu} we get
\[
\tilde{u}_\eps \xrightarrow[]{L^2(\Sigma_1)}\tilde{u}_0,
\]
so that
\[
\int_0^{h(\sigma)}\tilde{u}_\eps(\sigma+t\nu_0)\,dt\xrightarrow[]{L^2(\partial\Omega)}\int_0^{h(\sigma)}\tilde{u}_0(\sigma+t\nu_0)\,dt.
\]
Indeed, by \eqref{eqivint1},
\[\int_{\partial\Omega}\left(\int_0^{h(\sigma)}\left(\tilde{u}_\eps(\sigma+t\nu_0)-\tilde{u}_0(\sigma+t\nu_0)\right)\,dt\right)^2\,d\Hn\le C\int_{\Sigma_1}(\tilde{u}_\eps-\tilde{u}_0)^2\,dz. \]
Therefore, passing to the limit in \eqref{stimaliminf}, and using the explicit expression of $\tilde{u}_0$, we get
\[
\liminf_{\eps\to0^+}\delta\mathcal{F}_\eps(u_\eps)\ge\beta\int_{\partial\Omega}\dfrac{hH(2+\beta h)}{2(1+\beta h)^2}u_0^2(\sigma)\,d\Hn,
\]
and the $\Gamma$-liminf is proved.
\medskip

We now prove the $\Gamma$-limsup inequality.

Let
\[
\varphi_\eps(x)=\begin{dcases}u_0(x) &\text{if }x\in\Omega, \\[5 pt]
u_0(\sigma(x))\left(1-\dfrac{\beta d(x)}{\eps(1+\beta h(x))}\right) &\text{if }x\in\Sigma_\eps,\\[5 pt]
0 &\text{if }x\in\R^n\setminus\Omega_\eps,
\end{dcases}\]
where we recall that  $\nabla h\cdot\nu_0=0$. We have that $\varphi_\eps\in H^1(\Omega)$ and $\varphi_\eps$ converges in $L^2(\R^n)$, to $u_0\chi_\Omega$.
Since $\varphi_\eps\equiv u_0$ in $\Omega$, then by definition of the functionals \eqref{eq: defFeps}, and \eqref{eq: defF0}, we can write
\begin{equation}
    \label{eq: Fe-F0limsup}
        \mathcal{F}_\eps(\varphi_\eps)-\mathcal{F}_0(u_0)= \eps\int_{\Sigma_\eps} \abs{\nabla \varphi_\eps}^2\,dx+\beta\int_{\partial\Omega_\eps} \varphi_\eps^2\,d\Hn-\beta\int_{\partial\Omega} \frac{u_0^2}{1+\beta h}\,d\Hn.
\end{equation}
 Computing the gradient of $\varphi_\eps$, for any $x\in\Sigma_\eps$,
\[
\abs{\nabla \varphi_\eps}^2(x)\le\dfrac{\beta^2 u_0^2(\sigma(x))}{\eps^2(1+\beta h)^2}+C\left(\abs{\nabla u_0(\sigma(x))}^2+u_0^2(\sigma(x))\right),
\]
where $C=C(h,\beta)$. 
 Hence, by \eqref{eq:intsigma}, and noticing that $\sigma+t\nu_0\in \Gamma_{d_0}$ implies $d(\sigma+t\nu_0(\sigma))=t$, we get
\begin{equation}\label{stimagrad}
\begin{split}
    \eps\int_{\Sigma_\eps} \abs{\nabla \varphi_\eps}^2\,dx\le&\dfrac{\beta^2}{\eps}\int_{\Sigma_\eps}\dfrac{u_0^2(\sigma)}{(1+\beta h)^2}\,dx+\eps C\int_{\Sigma_\eps} \left(\abs{\nabla u_0(\sigma(x))}^2+u_0^2(\sigma(x))\right)\,dx \\[10 pt]
    \le&\beta^2\int_{\partial\Omega}\dfrac{u_0^2(\sigma)h}{(1+\beta h)^2}\left(1+\dfrac{\eps h H}{2}\right)\,d\Hn+O(\eps^2).
\end{split}
\end{equation}
On the other hand, for every $\sigma\in\partial\Omega$,
\[
\varphi_\eps(\sigma+\eps h(\sigma)\nu_0(\sigma))=\frac{u_0(\sigma)}{1+\beta h(\sigma)},
\]
from which we get
\begin{equation}\label{phi2}\begin{split}
\beta\int_{\partial{\Omega_\eps}} \varphi_\eps^2\,d\Hn\le\beta \int_{\partial\Omega}\dfrac{u_0^2(\sigma)}{(1+\beta h)^2}(1+\eps h H)\,d\Hn
+O(\eps^2).
\end{split}
\end{equation}
Finally, joining \eqref{eq: Fe-F0limsup}, \eqref{stimagrad}, \eqref{phi2}, we have
\[
\begin{split}
\delta\mathcal{F}_\eps(\varphi_\eps)=\dfrac{\mathcal{F}_\eps(u_\eps)-\mathcal{F}_0}{\eps}&\le\beta\int_{\partial\Omega}\dfrac{u_0^2(\sigma) h H}{(1+\beta h)^2}\left(\dfrac{\beta h}{2}+1\right)\,d\Hn +O(\eps)\\[10 pt]
&=\beta\int_{\partial\Omega}\dfrac{hH(2+\beta h)}{2(1+\beta h)^2}\,u_0^2(\sigma)\,d\Hn +O(\eps)
\end{split}
\]
so that
\[\limsup_{\eps\to0^+}\,\delta\mathcal{F}_\eps(\varphi_\eps)\le\beta\int_{\partial\Omega}\dfrac{h H(2+\beta h)}{2(1+\beta h)^2}\,u_0^2(\sigma)\,d\Hn\]
and the $\Gamma$-limsup inequality is proved.
 \end{proof}

\section{Energy estimates}\label{EE}
The aim of this section is to prove \autoref{teor: C11energy}. The proof is mainly divided in two steps:
\begin{enumerate}[label= \textbf{Step \arabic*.}]
    \item We prove some $H^1$ uniform estimates for the minimizer $u_\eps$ (see \autoref{cor: firstorderestim}).
    \item We prove the uniform $H^2$ estimates in \autoref{teor: C11energy} for the minimizers $u_\eps$ using a local argument similar to the approach in \cite{BCF}: we focus on small neighborhoods $V_{\sigma_0}$ of points $\sigma_0\in\partial\Omega$, and we construct a diffeomorphism $\Phi_{\sigma_0}$ that flattens both $\partial\Omega$ and $\partial\Omega_\eps$; on the flattened set $\Phi(V_{\sigma_0}\cap\Sigma_\eps)$ we are able to compute energy estimates of the new functions $v_\eps=u_\eps\circ \Phi^{-1}$.
\end{enumerate}
As in the previous sections, let $u_\eps$ be the minimizer to
\begin{equation}
 \label{eq: min}
 \min\Set{\mathcal{F}_\eps(v) | v\in H^1(\Omega_\eps)},
\end{equation}
where
\[
\mathcal{F}_\eps(v)=\int_\Omega \abs{\nabla v}^2\,dx-2\int_\Omega f v\,dx+\eps\int_{\Sigma_\eps} \abs{\nabla v}^2\,dx+\beta\int_{\partial\Omega_\eps} v^2\,d\Hn,
\]
and $f\in L^2(\Omega)$. The functions $u_\eps$ can be equivalently seen as the unique solutions to the equations
\begin{equation}
\label{eq: weakeq}
\int_\Omega \nabla u_\eps\cdot \nabla \varphi\, dx+\eps\int_{\Sigma_\eps}\nabla u_\eps \cdot \nabla \varphi \, dx+\beta \int_{\partial\Omega_\eps}u_\eps\varphi\,d\Hn = \int_\Omega f\varphi\,dx, 
\end{equation}
for every $\varphi\in H^1(\Omega_\eps)$. We notice that for every $\eps>0$, by standard elliptic regularity, $u_\eps\in H^2_{\loc}(\Omega_\eps\setminus\partial\Omega)$.

\subsection{Step 1: \texorpdfstring{$H^1$}{} uniform estimates}
We now estimate the $H^1$ norm of $u_\eps$ in terms of $\eps$.
\begin{lemma}
Let $\Omega\subset\R^n$ be a bounded, open set with $C^{1,1}$ boundary, and  fix a positive function $h\in C^{0,1}(\Gamma_{d_0})$ such that $\nabla h\cdot\nu_0=0$. Then there exist positive constants $\eps_0(\Omega)$, and $C(\Omega,\norma{h}_{C^{0,1}},\beta,f)$ such that if
\begin{equation}
    \eps\norma{h}_{\infty}\le \eps_0,
\end{equation}
then for every $v\in H^1(\Omega_\eps)$
\begin{equation}\label{eq:epspoinc}\int_{\Omega_\eps} v^2\,dx\le C\left[\int_\Omega \abs{\nabla v}^2\,dx+\eps\int_{\Sigma_\eps} \abs{\nabla v}^2\,dx+\beta\int_{\partial\Omega_\eps} v^2\,d\Hn\right].\end{equation}
\end{lemma}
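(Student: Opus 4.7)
The goal is an $\eps$-uniform Poincaré-type inequality on $\Omega_\eps$ whose only terms on the right are the "natural" energy quantities appearing in $\mathcal{F}_\eps$. My plan is to split $\int_{\Omega_\eps} v^2 = \int_\Omega v^2 + \int_{\Sigma_\eps} v^2$ and handle each piece by using the thinness of $\Sigma_\eps$ to trade an integral over $\Sigma_\eps$ for a boundary integral on $\partial\Omega_\eps$ plus an $\eps$-weighted Dirichlet energy, via the fundamental theorem of calculus along the normal rays from $\partial\Omega$.

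\textbf{Step 1 (the layer term).} For $x = \sigma + t\nu_0(\sigma) \in \Sigma_\eps$, write
\[
v(x) \;=\; v\bigl(\sigma+\eps h(\sigma)\nu_0(\sigma)\bigr) \;-\; \int_{t}^{\eps h(\sigma)} \nabla v(\sigma+s\nu_0)\cdot\nu_0\,ds,
\]
square it, use $(a+b)^2\le 2a^2+2b^2$ and Cauchy--Schwarz on the integral, and integrate over $\Sigma_\eps$ via \eqref{eq:intsigma}, \eqref{eqivint1} and \eqref{eqivint2}. One obtains
\[
\int_{\Sigma_\eps} v^2\,dx \;\le\; C\,\eps\norma{h}_\infty\!\int_{\partial\Omega_\eps} v^2\,d\Hn \;+\; C\,(\eps\norma{h}_\infty)^2\int_{\Sigma_\eps}|\nabla v|^2\,dx,
\]
which, for $\eps\norma{h}_\infty\le\eps_0$, absorbs into the right-hand side of \eqref{eq:epspoinc}.

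\textbf{Step 2 (the bulk term).} By the standard trace-type Poincaré inequality on the fixed Lipschitz domain $\Omega$, there is $C=C(\Omega)$ with
\[
\int_\Omega v^2\,dx \;\le\; C\!\left[\int_\Omega|\nabla v|^2\,dx + \int_{\partial\Omega} v^2\,d\Hn\right].
\]
It remains to control $\int_{\partial\Omega}v^2\,d\Hn$ by the right-hand side of \eqref{eq:epspoinc}. For this I apply the same fundamental-theorem-of-calculus trick with $t=0$:
\[
v(\sigma) = v(\sigma+\eps h(\sigma)\nu_0) - \int_0^{\eps h(\sigma)}\nabla v(\sigma+s\nu_0)\cdot\nu_0\,ds,
\]
square, apply Cauchy--Schwarz, and integrate over $\partial\Omega$. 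Using \eqref{eqivint1}--\eqref{eqivint2} gives
\[
\int_{\partial\Omega} v^2\,d\Hn \;\le\; C\int_{\partial\Omega_\eps} v^2\,d\Hn + C\,\eps\,\int_{\Sigma_\eps}|\nabla v|^2\,dx.
\]

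\textbf{Step 3 (combine).} Adding the estimates from Steps~1 and 2 and dividing by $\min\{1,\beta\}$ (or multiplying the boundary term by $\beta/\beta$) yields
\[
\int_{\Omega_\eps} v^2\,dx \;\le\; C\!\left[\int_\Omega|\nabla v|^2\,dx + \eps\int_{\Sigma_\eps}|\nabla v|^2\,dx + \beta\int_{\partial\Omega_\eps} v^2\,d\Hn\right],
\]
with $C=C(\Omega,\norma{h}_{C^{0,1}},\beta)$, which is \eqref{eq:epspoinc}. Note that $f$ plays no role in this lemma, so the stated dependence on $f$ is vacuous at this step.

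The only subtlety is ensuring that none of the constants depend on $\eps$: this is guaranteed because the classical Poincaré constant for $\Omega$ is fixed, the Jacobians in \eqref{eqivint1}--\eqref{eqivint2} are uniformly bounded once $\eps\norma{h}_\infty\le\eps_0<d_0$, and every $\eps$-factor that appears comes with a positive power so the smallness threshold $\eps_0$ can be chosen so that layer-contributions are absorbed.
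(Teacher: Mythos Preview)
Your proof is correct and follows essentially the same route as the paper: split $\int_{\Omega_\eps}v^2$ into the bulk and layer parts, use the fundamental theorem of calculus along normal rays to control both $\int_{\Sigma_\eps}v^2$ and $\int_{\partial\Omega}v^2$ by $\int_{\partial\Omega_\eps}v^2$ plus an $\eps$-weighted Dirichlet term, and finish with the standard Poincar\'e-with-trace inequality on the fixed domain $\Omega$. Your observation that $f$ plays no role is also correct; the paper's proof does not use it either.
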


\begin{proof}

Up to using a density argument, it is enough to prove the assertion for every smooth function in $H^1(\Omega_\eps)$. Let $v\in C^1(\overline{\Omega_\eps})$. For every $x\in\Sigma_\eps$ we recall that we can represent%
\[
x=\sigma(x)+d(x)\nu_0(x), %
\]
and we define
\[
\xi(x):=\sigma(x)+\eps h(x)\, \nu_0(x).
\]
This construction allows us to write for every $x\in\Sigma_\eps$
\[
v(x)=v(\xi(x))-\int_{d(x)}^{\eps h(x)} \dfrac{\partial}{\partial\nu_0} v(\sigma(x)+t\nu_0(x))\,dt.\]
Hence, by Young and Hölder inequalities,
\begin{equation}
\label{eq: v2estim}
v^2(x)\le2v^2(\xi(x))+2\eps\norma{h}_\infty\int_0^{\eps h(x)}\abs{\nabla v}^2(\sigma(x)+t\nu_0(x))\,dt.
\end{equation}
Integrating over $\Sigma_\eps$, using \eqref{eqivint1} and \eqref{eqivint2}, and recalling that $\xi(x)=\xi(\sigma(x))$, we can find a positive constant $C=C(\Omega,\norma{h}_{C^{0,1}})$ such that
\begin{equation}\label{eq: epspoinc0}\begin{split}\int_{\Sigma_\eps} v^2\,dx\le& C\int_{\partial\Omega}\int_0^{\eps h(\sigma)} v^2(\xi(\sigma))\,ds\,d\Hn(\sigma)\\[10 pt]
&+\eps C\int_{\partial\Omega}\int_0^{\eps h(\sigma)} \int_0^{\eps h(\sigma)} \abs{\nabla v}^2(\sigma+t\nu_0)\,dt\,ds\,d\Hn(\sigma)\\[10 pt]
\le&\eps C\left(\int_{\partial\Omega_\eps}v^2\,d\Hn+\eps\int_{\Sigma_\eps}\abs{\nabla v}^2\,dx\right).
\end{split}\end{equation}
Similarly, we can integrate \eqref{eq: v2estim} over $\partial\Omega$ and have
\begin{equation}\label{eq: partiaomega}\int_{\partial\Omega} v^2\,d\Hn\le C\left(\int_{\partial\Omega_\eps}v^2\,d\Hn+\eps\int_{\Sigma_\eps}\abs{\nabla v}^2\,dx\right).\end{equation}
From the Poincaré inequality with trace term in $\Omega$ and the Bossel-Daners inequality, we have 
\begin{equation}
\label{eq: epspoinc1}
\int_\Omega v^2\,dx\le C_p(\abs{\Omega})\left(\int_\Omega \abs{\nabla v}^2\,dx+\int_{\partial\Omega} v^2\,d\Hn\right),
\end{equation}
so that joining \eqref{eq: epspoinc0} and \eqref{eq: epspoinc1}, and using \eqref{eq: partiaomega} we have the assertion.

\end{proof}

\begin{cor}
\label{cor: firstorderestim}
Let $\Omega\subset\R^n$ be a bounded, open set with $C^{1,1}$ boundary, and  fix a positive function $h\in C^{0,1}(\Gamma_{d_0})$ such that $\nabla h\cdot\nu_0=0$. Then there exists a positive constant $C=C(\Omega, h,\beta,f)$
such that if $u_\eps\in H^1(\Omega_\eps)$ is the minimizer to \eqref{eq: min}, then
\begin{equation}
\label{eq: firstderbound}
    \int_\Omega \abs{\nabla u_\eps}^2\,dx+\eps\int_{\Sigma_\eps} \abs{\nabla u_\eps}^2\,dx+\beta\int_{\partial\Omega_\eps} u_\eps^2\,d\Hn\le C,
\end{equation}
and
\begin{equation}
\label{eq: L2bound}
\int_{\Omega_\eps} u_\eps^2\,dx\le C.
\end{equation}
\end{cor}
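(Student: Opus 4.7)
The plan is to exploit the minimality of $u_\eps$ and combine it with the Poincaré-type inequality \eqref{eq:epspoinc} in a short bootstrap argument. Define for brevity the energy
\[
E_\eps(v) := \int_\Omega \abs{\nabla v}^2\,dx + \eps\int_{\Sigma_\eps}\abs{\nabla v}^2\,dx + \beta\int_{\partial\Omega_\eps} v^2\,d\Hn.
\]
Since $u_\eps$ minimizes $\mathcal{F}_\eps$ and $\mathcal{F}_\eps(0) = 0$, we get
\[
E_\eps(u_\eps) \le 2\int_\Omega f u_\eps\,dx.
\]

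Next, I would estimate the right-hand side by Cauchy--Schwarz as
\[
2\int_\Omega f u_\eps\,dx \le 2\norma{f}_{L^2(\Omega)}\norma{u_\eps}_{L^2(\Omega)} \le 2\norma{f}_{L^2(\Omega)}\norma{u_\eps}_{L^2(\Omega_\eps)},
\]
and then apply the previous lemma \eqref{eq:epspoinc} to $v = u_\eps$, obtaining
\[
\norma{u_\eps}_{L^2(\Omega_\eps)}^2 \le C\, E_\eps(u_\eps).
\]
Chaining these two inequalities yields $E_\eps(u_\eps) \le 2\sqrt{C}\,\norma{f}_{L^2(\Omega)}\sqrt{E_\eps(u_\eps)}$, hence
\[
E_\eps(u_\eps) \le 4C\,\norma{f}_{L^2(\Omega)}^2,
\]
which is exactly \eqref{eq: firstderbound}. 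Plugging this back into \eqref{eq:epspoinc} immediately gives \eqref{eq: L2bound}.

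There is no real obstacle here: the only subtlety is that the Poincaré inequality \eqref{eq:epspoinc} has the $\eps$-weighted energy already built into its right-hand side with $\eps$-independent constant $C$, so the bootstrap closes uniformly in $\eps$ (provided $\eps\le \eps_0$ as in the previous lemma). The constant $C$ in the statement depends on $\Omega$, $h$, $\beta$ through $C_p$ in \eqref{eq:epspoinc}, and on $f$ through $\norma{f}_{L^2(\Omega)}$, as claimed.
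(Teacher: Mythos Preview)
Your proof is correct and essentially identical to the paper's: both use minimality to get $E_\eps(u_\eps)\le 2\int_\Omega f u_\eps$, then control the right-hand side via the Poincar\'e-type inequality \eqref{eq:epspoinc} and absorb. The only cosmetic difference is that the paper applies Young's inequality with a parameter $\eta$ (writing $2\int_\Omega f u_\eps\le \eta\int_\Omega f^2+\eta^{-1}\int_\Omega u_\eps^2$) and then chooses $\eta$ suitably, whereas you use Cauchy--Schwarz and the algebraic step $E\le b\sqrt{E}\Rightarrow E\le b^2$; these are interchangeable.
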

\begin{proof}
For every $\eta>0$, we can write
\[\begin{split}\int_\Omega \abs{\nabla u_\eps}^2\,dx+\eps\int_{\Sigma_\eps} \abs{\nabla u_\eps}^2\,dx+\beta\int_{\partial\Omega_\eps} u_\eps^2\,d\Hn&\le \mathcal{F}_\eps(0)+2\int_\Omega f u_\eps\,dx\\[15 pt]
&\le \eta\int_\Omega f^2\,dx+\dfrac{1}{\eta}\int_\Omega u_\eps^2\,dx.\end{split}\]
Using \eqref{eq:epspoinc}, for a suitable choice of $\eta$, we get the result.
\end{proof}

\subsection{Flattening the boundaries}
Here we aim to construct a flattening diffeomorphism that locally transforms $\partial\Omega$ and $\partial\Omega_\eps$ in subsets of parallel planes. To do so, we have to represent locally $\partial\Omega$ and $\partial\Omega_\eps$.
\begin{lemma}[Uniform local representation of $\partial\Omega$ and $\partial\Omega_\eps$.]
\label{lem: uniformrepr}
    Let $\Omega\subset\R^n$ be a bounded, open set with $C^{1,1}$ boundary, fix a positive function $h\in C^{0,1}(\Gamma_{d_0})$ such that $\nabla h\cdot\nu_0=0$, and let $\sigma_0\in\partial\Omega$. There exists $\eps_0=\eps_0(\Omega,\sigma_0)$ such that, if
    \[
        \eps\norma{h}_{C^{0,1}}<\eps_0,
    \]
    then there exist an open set $V$ containing $\sigma_0$ and $\sigma_0+\eps h(\sigma_0)\nu_0(\sigma_0)$, and there exist functions $g,k_\eps\colon \R^{n-1}\to\R$ such that, up to a rototranslation, 
\[
\begin{split}
\Omega\cap V&=\Set{(x',x_n)|x_n\le g(x')}\cap V,\\
\Omega_\eps\cap V&=\Set{(x',x_n)|x_n\le g(x')+ \eps k_\eps(x')}\cap V,
\end{split}
\] 
and 
\[
\begin{split}
\partial\Omega\cap V&=\Set{(x',x_n)|x_n= g(x')}\cap V,\\
\partial\Omega_\eps\cap V&=\Set{(x',x_n)|x_n=g(x')+ \eps k_\eps(x')}\cap V,
\end{split}
\] 
    \begin{proof}
    Without loss of generality we can assume that $\nu_0(\sigma_0)=\mathbf{e}_n$ and $\sigma_0=0$. We already know by definition that $\Omega$ can be represented locally near $0$, that is, there exist a neighborhood $U$ of $\sigma_0$ and a function $g\in C^{1,1}(\R^{n-1})$ with $g(0)=0$ and $\nabla g(0)=0$, such that
      \[
        \Omega\cap U=\Set{(x',x_n)\in\R^n | x_n\le g(x')}\cap U.
    \]
 For every $r\in(0,1)$ let 
    \[
        B_r'=\Set{x'\in \R^{n-1} | \abs{x'}\le r},
    \]
   and $V_r=B_r'\times [-2r,2r]$. For every $x'\in B'_r$ let
    \[
        F_{x'}: t\in(g(x'),2r]\longmapsto  d(x',t)-\eps h(x',t),
    \]
    where we recall that $d(x)$ denotes the distance from $\Omega$. Let us also recall that
    \[
     \Sigma_\eps=\Omega_\eps\setminus\overline{\Omega}=\Set{x\in\R^n | 0<d(x)<\eps h(x)}.
    \]
    The definition of $F_{x'}$ gives us the possibility to characterize the property $(x',t)\in\Omega_\eps$ in the equivalent way $F_{x'}(t)<0$. As an immediate consequence, $F_{x'}(g(x'))<0$ for every $x'\in B_r'$. 
    
    The idea of the proof can be divided in two main steps: first we show that for a right choice of $\eps_0$ the set $\partial\Omega_\eps$ cannot touch the upper base of the cylinder $V_r$ by proving $F_{x'}(2r)>0$ for every $x'\in B_r'$; next, we show that for a right choice of $\eps_0$ we have that $\partial\Omega_\eps$ can be represented as a graph over the whole $B_r'$ by showing that $F_{x'}$ is strictly increasing in $t$ for every $x'\in B_r'$ 

   Let us choose $r=r(\sigma_0,\Omega)$ small enough so that we have $V_r\subseteq U$ and, thanks to the continuity of $\nu_0$ in $\Gamma_{d_0}$
   \begin{equation}
    \label{eq: nu0en}
   \abs{\nu_0(x)-\mathbf{e}_n}<\dfrac{1}{2},
   \end{equation}
   for every $x\in V_r\cap \Gamma_{d_0}$. Note in addition that up to choosing a smaller $r$ we can assume the vertical distance between $\partial\Omega$ and the upper base of $V_r$ being greater than $r$, namely 
    \begin{equation}
    \label{eq: 2r-maxg}
        2r-\max_{x'\in B_r'}g(x')>r.
    \end{equation}
 Indeed, since $\nabla g$ is continuous and $\nabla g(0)=0$, then for small enough $r$ we also have, for every $x'\in B_r'$,
\[
\abs{\nabla g(x')}<1,
\]
 which joint with the fact that $g(0)=0$, ensures \eqref{eq: 2r-maxg}.

    Let $x'\in B'_r$, fix $t\in (g(x'),2r]$, and let $x=(x',t)$. We claim that under the assumption \eqref{eq: nu0en}, we have
    \begin{equation} 
    \label{eq: distt-g}
    d(x)> \frac{t-g(x')}{2}.
    \end{equation}
    Indeed, let $\bar{x}$ denote the point at distance $d(\bar{x})=t-g(x')$ whose projection onto $\Omega$ is $(x',g(x'))$, namely
\[
\overline{x}=(x',g(x'))+(t-g(x'))\nu_0(x',g(x')).
\]
We have
\[
\abs{x-\overline{x}}=(t-g(x'))\,\abs{\mathbf{e}_n-\nu_0(x',g(x'))}<\dfrac{t-g(x')}{2},
\]
and, using the fact that the distance $d$ is Lipschitzian with constant 1, 
\[\begin{split}d(x)&\ge d(\overline{x})-\abs{d(x)-d(\overline{x})}\\[5 pt]
&\ge d(\overline{x})-\abs{x-\overline{x}}>\dfrac{t-g(x')}{2}.
\end{split}\]
We now join \eqref{eq: distt-g}, \eqref{eq: 2r-maxg}, and $\eps\norma{h}_\infty<\eps_0$ to get that for $\eps_0<r/2$
\begin{equation}
\label{eq: Fx'(2r)}
F_{x'}(2r)>\dfrac{2r-g(x')}{2}-\eps_0>0,
\end{equation}
which concludes the first step.

    We now prove that $F_{x'}(t)$ is monotone increasing in $t$. Indeed, by the assumption \eqref{eq: nu0en}, we have that
   \[
        \nu_0(x)\cdot \mathbf{e}_n>\frac{1}{2},\] 
        and
        \[\abs{\nabla h(x)\cdot \mathbf{e}_n}=\abs{\nabla h(x)\cdot( \mathbf{e}_n-\nu_0(x))}\le\frac{\norma{\nabla h}_\infty}{2},
    \]
    so that, since $r<1$ and $\eps_0<r/2$, we have $\eps\norma{\nabla h}_{\infty}<1/2$, which ensures
    \begin{equation}
    \label{eq: Fxincreas}
        \dfrac{d}{dt}F_{x'}(t)=\nu_0(x',t)\cdot\mathbf{e}_n-\eps\nabla h(x',t)\cdot\mathbf{e}_n > \frac{1}{4}.
    \end{equation}

    Joining \eqref{eq: Fx'(2r)} and \eqref{eq: Fxincreas}, we get that for every $x'\in B_r'$ there exists a unique $t(x')\in(g(x'),2r)$ such that $(x',t)\in\Omega_\eps\cap V_r$ if and only if $t\le t(x')$, thus concluding the proof.
\end{proof}
\end{lemma}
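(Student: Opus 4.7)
The plan is to combine the standard $C^{1,1}$ local graph representation of $\partial\Omega$ near $\sigma_0$ with a one-dimensional implicit-function-style argument for $\partial\Omega_\eps$ along the vertical direction $\mathbf{e}_n=\nu_0(\sigma_0)$. After a rigid motion sending $\sigma_0$ to the origin and $\nu_0(\sigma_0)$ to $\mathbf{e}_n$, the regularity of $\partial\Omega$ gives a neighborhood $U$ of the origin and a function $g\in C^{1,1}(\R^{n-1})$ with $g(0)=0$ and $\nabla g(0)=0$ realizing $\Omega\cap U$ as the subgraph of $g$. The target is then to find, on a smaller cylinder $V=B_r'\times(-2r,2r)\subset U$, a function $k_\eps$ so that $\partial\Omega_\eps\cap V$ is the graph $x_n=g(x')+\eps k_\eps(x')$.

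To produce $k_\eps$, I would for each $x'\in B_r'$ study the scalar function
\[
F_{x'}(t)=d\bigl((x',t)\bigr)-\eps h\bigl((x',t)\bigr), \qquad t\in\bigl(g(x'),2r\bigr].
\]
By the very definition of $\Sigma_\eps$, $(x',t)\in\Omega_\eps$ iff $F_{x'}(t)\le0$, so the vertical slice of $\partial\Omega_\eps$ over $x'$ is exactly the zero set of $F_{x'}$. Producing the second graph then reduces to two claims about $F_{x'}$ on its domain: (i) $F_{x'}$ is strictly increasing, and (ii) $F_{x'}(2r)>0$. Given these, the intermediate value theorem provides a unique $t(x')\in(g(x'),2r)$ with $F_{x'}(t(x'))=0$, and setting $k_\eps(x')=\eps^{-1}(t(x')-g(x'))$ yields the required representation.

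For (i), I would use $\nabla d=\nu_0$ inside $\Gamma_{d_0}$ and compute
\[
F_{x'}'(t)=\nu_0(x',t)\cdot\mathbf{e}_n-\eps\nabla h(x',t)\cdot\mathbf{e}_n.
\]
Continuity of $\nu_0$ at $0$ lets me shrink $r$ so that $\nu_0\cdot\mathbf{e}_n\ge \tfrac12$ throughout $V$. The crucial leverage comes from the assumption $\nabla h\cdot\nu_0=0$, which upgrades to $|\nabla h\cdot\mathbf{e}_n|=|\nabla h\cdot(\mathbf{e}_n-\nu_0)|\le \tfrac12\|\nabla h\|_\infty$ on $V$; choosing $\eps\|h\|_{C^{0,1}}$ small enough then forces $F_{x'}'\ge \tfrac14$. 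For (ii), I would argue that for any $(x',t)\in V$ the actual distance to $\Omega$ dominates half the vertical distance, $d(x',t)\ge \tfrac12(t-g(x'))$, by comparing $(x',t)$ with the point $(x',g(x'))+(t-g(x'))\nu_0(x',g(x'))$ that realizes distance $t-g(x')$; since $\nabla g(0)=0$, one can further shrink $r$ to obtain $2r-\max_{B_r'}g\ge r$, and then $F_{x'}(2r)\ge r/2-\eps\|h\|_\infty>0$ once $\eps_0$ is taken smaller than $r/(2\|h\|_\infty)$.

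The main obstacle is the simultaneous calibration of the two smallness parameters $r$ (geometric, controlling how vertical $\nu_0$ remains in $V$) and $\eps_0$ (controlling the layer's thickness relative to $r$): both must be chosen compatibly so that the monotonicity and the escape-from-cylinder estimates hold on the same neighborhood. The orthogonality condition $\nabla h\cdot\nu_0=0$ is what prevents the $\eps\nabla h\cdot\mathbf{e}_n$ term from spoiling monotonicity of $F_{x'}$, and hence plays an essential role rather than being a mere convenience.
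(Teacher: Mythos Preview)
Your proposal is correct and follows essentially the same argument as the paper: both reduce to studying the same scalar function $F_{x'}(t)=d(x',t)-\eps h(x',t)$, establish strict monotonicity via $\nabla d=\nu_0$ together with $\nabla h\cdot\nu_0=0$ and $|\nu_0-\mathbf{e}_n|<\tfrac12$, and verify $F_{x'}(2r)>0$ by comparing $(x',t)$ with the point at normal distance $t-g(x')$ from $(x',g(x'))$. One small overstatement: the orthogonality $\nabla h\cdot\nu_0=0$ is convenient but not strictly essential here, since without it one still has $|\eps\nabla h\cdot\mathbf{e}_n|\le \eps\|h\|_{C^{0,1}}<\eps_0$, which suffices for monotonicity once $\eps_0<\tfrac12$.
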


Thanks to \autoref{lem: uniformrepr}, we can now represent both $\partial\Omega$ and $\partial\Omega_\eps$ as graphs, uniformly in $\eps$, and flatten the boundaries of $\Omega$ and $\Omega_\eps$. We define the invertible map
\[\Phi_{\sigma_0}\colon (x',x_n)\in V\mapsto\left(x',\dfrac{x_n-g(x')}{k_\eps(x')}\right)\in \tilde{V},\]
where $\tilde V=\Phi_{\sigma_0}(V)$. For simplicity's sake, when possible, we will drop the explicit dependence on the point $\sigma_0\in\partial\Omega$.
Notice that the map $\Phi$ indeed flattens the boundaries of $\Omega$ and $\Omega_\eps$, in the sense that
\[\Phi(\partial\Omega\cap V)=\set{y_n=0}\cap \tilde{V},\]
and \[ \Phi(\partial\Omega_\eps\cap V)=\set{y_n=\eps}\cap \tilde{V}.\]
For every $\delta>0$ we define the cube 
\[\tilde{Q}_\delta=\set{y\in\R^n| \abs{y_i}\le\delta,\,\text{for every }1\le i\le n}.\]
Up to choosing a smaller neighborhood $V$ of $\sigma_0$, we may assume that 
\[
\tilde{V}=\Phi(V)=\tilde{Q}_{\delta_0}
\]
for some $\delta_0=\delta_0(\sigma_0,\Omega)>0$. For every $0<\delta<\delta_0$ we define
\[Q_\delta=\Phi^{-1}(\tilde{Q}_\delta).\]
\begin{oss}[Estimates for $k_\eps$, $\Phi$, and $\Phi^{-1}$]
\label{rem: kepsestim}
     We claim that 
    there exists a positive constant $C=C(\Omega,\norma{h}_{C^{1,1}})$ such that 
    \begin{equation}
        \label{eq: kepsstime}
        \min k_\eps\ge \frac{1}{C}, \qquad\qquad  \norma{k_\eps}_{C^{1,1}}+\norma{\Phi}_{C^{1,1}}+\norma{\Phi^{-1}}_{C^{1,1}}\le C.
    \end{equation}
    Notice that, since $\Omega\subset\Omega_\eps$ for every $\eps>0$, then $k_\eps$ is non negative. Moreover, for every $x=(x',x_n)\in\partial\Omega_\eps\cap V $ 
\[
\eps k_\eps(x')\ge d(x,\partial\Omega)\ge\eps \min_{\Gamma_{d_0}} h,
\]
so that $k_\eps$ is strictly bounded from below uniformly in $\eps$. Similarly, $k_\eps$ is bounded in $C^{1,1}$ norm uniformly in $\eps$, indeed for every $x\in\partial\Omega_\eps\cap V$, we can write $x=(x', g(x')+\eps k_\eps(x'))$, and we have that
\[d(x,\partial\Omega_\eps)=0.\]
Differentiating with respect to $x'$ we get
\begin{equation}\label{kimpl}\nu_\eps' +\nu_\eps\cdot\mathbf{e}_n(\nabla g+\eps\nabla k_\eps)=0,\end{equation}
where $\nu_\eps'\in\R^{n-1}$ is the vector whose components are the first $n-1$ components of $\nu_\eps$. Using the fact that
\[\nu_\eps=\dfrac{\nu_0-\eps\nabla h}{\sqrt{1+\eps^2\abs{\nabla h}^2}},\]
and that in $V$
\[\nu_0=\left(-\dfrac{\nabla g}{\sqrt{1+\abs{\nabla g}^2}},\dfrac{1}{\sqrt{1+\abs{\nabla g}^2}}\right),\]
we can rewrite equation \eqref{kimpl} as
\begin{equation}
\label{nablak}
\nabla k_\eps=\dfrac{(\nabla h)'+(\nabla h\cdot\mathbf{e}_n) \nabla g}{(\nu_0-\eps\nabla h)\cdot\mathbf{e}_n}.
\end{equation}
Finally, the choice of $\eps_0$ in \autoref{lem: uniformrepr} (see \eqref{eq: Fxincreas}) ensures us that
\begin{equation}
\label{eq: niepsen}
    (\nu_0-\eps\nabla h)\cdot\mathbf{e}_n>\dfrac{1}{4},
\end{equation}
so that from equation \eqref{nablak} and \eqref{eq: niepsen}, \eqref{eq: kepsstime} follows.
\end{oss}

\begin{oss}[The equation in the flattened set]
\label{rem: flateq}
    Let $u_\eps$ be the solution to \eqref{eq: weakeq}, fix $\sigma_0\in\partial\Omega$, and let
    \[
    v(y)=u_\eps(\Phi^{-1}(y)),
    \]
    then \[v\in H^1\bigl(\set{y_n<\eps}\cap\tilde{V}\bigr)\cap H^2_{\loc}\Bigl(\bigl(\set{y_n<\eps}\cap\tilde{V}\bigr)\setminus\set{y_n=0}\Bigr)\] and, for all $\varphi\in H^1_0(\tilde{V})$, $v$ solves the equation
    \begin{equation}
    \label{eq: recteq}
    \begin{aligned}
        \int_{\{y_n< \eps\}\cap \tilde{V}} \eps(y_n) A_\eps \nabla v\cdot \nabla \varphi\, dy +\beta \int_{\{y_n=\eps\}\cap \tilde{V}}v\varphi J_\eps\, d\Hn = \int_{\{y_n<0\}\cap \tilde{V} }\tilde f_\eps \varphi \, dy,%
    \end{aligned}
    \end{equation}
    where
\[
    \eps(y_n)=\begin{cases}
        \eps &\text{if }y_n>0, \\
        1 &\text{if } y_n\le 0,
        \end{cases}
    \]
    \medskip
    \[
        A_\eps(y) = k_\eps(y') (D(\Phi^{-1})(y))^{-1}(D(\Phi^{-1})(y))^{-T} , \qquad\qquad \quad  \tilde{f}_\eps(y)=f(\Phi^{-1}(y))\,k_\eps(y'),
    \]    
    and 
    \[
        J_\eps(y)=\sqrt{1+\abs{\nabla g(y')+\eps \nabla k_\eps(y')}^2}.
    \]
    Notice that $A_\eps$ is elliptic and bounded, uniformly in $y$ and $\eps$. Moreover, using \eqref{eq: firstderbound} in \autoref{cor: firstorderestim}, we also get that there exists a positive constant $C=C(\Omega, h, \beta, f,\sigma_0)$ such that 
    \begin{equation}
    \label{eq: firstderboundrect}
    \int_{\set{y_n<\eps}\cap\tilde{V}}\eps(y_n)\abs{\nabla v}^2\, dy+\beta\int_{\{y_n=\eps\}\cap \tilde{V}}v^2\,d\Hn \le C. 
    \end{equation}
\end{oss}
\subsection{Step 2: \texorpdfstring{$H^2$}{} uniform estimates}
Since we aim to prove $H^2$ estimates with a local approach, we define the energy quantities $I_\delta$ and $\tilde{I}_\delta$ as follows: given a function 
\[
\varphi\in H^1\bigl(\set{y_n<\eps}\cap\tilde{V}\bigr)\cap H^2\Bigl(\bigl(\set{y_n<\eps}\cap\tilde{V}\bigr)\setminus\set{y_n=0}\Bigr)
\]
and $0<\delta<\delta_0$, we denote by
\begin{equation} 
\label{def:tildeI}
\tilde{I}_{\delta,\sigma_0}(\varphi)=\int_{\set{y_n<\eps}\cap\tilde{Q}_\delta}\eps(y_n)\abs{D^2 \varphi}^2\, dy+\beta\int_{\{y_n=\eps\}\cap \tilde{Q}_\delta}\abs{\nabla_{n-1}\varphi}^2\,d\Hn,
\end{equation}
where
\[
\nabla_{n-1} \varphi=\nabla \varphi-\dfrac{\partial \varphi}{\partial y_n} \mathbf{e}_n,\]
and, as in \autoref{rem: flateq},
\[
    \eps(t)=\begin{cases}
        \eps &\text{if }t>0, \\
        1 &\text{if } t\le 0.
        \end{cases}
\]
Analogously in $\Omega_\eps$, for every
\[
\varphi\in H^1\bigl(\Omega_\eps\cap V\bigr)\cap H^2\Bigr(\bigr(\Omega_\eps\cap V\bigl)\setminus\partial\Omega\Bigr),
\]
we let
\begin{equation} 
\label{def:I}
I_{\delta,\sigma_0}(\varphi)=\int_{\Omega_{\eps}\cap{Q}_\delta}\eps(d(x))\abs{D^2 \varphi}^2\, dx+\beta\int_{\partial\Omega_\eps\cap {Q}_\delta}\abs{\nabla^{\partial\Omega_\eps}\varphi}^2\,d\Hn,
\end{equation}
where we recall that
\[\nabla^{\partial\Omega_\eps}\varphi=\nabla\varphi-\left(\nabla \varphi\cdot\nu_\eps\right)\nu_\eps.\]
When possible, we will drop the dependence on $\sigma_0$. Uniform bounds for ${I}_\delta$ can be read as uniform bounds for $\tilde{I}_\delta$ and viceversa. Indeed, we have the following
\begin{lemma}
    Let $\Omega\subset\R^n$ be a bounded, open set with $C^{1,1}$ boundary, fix a positive function $h\in C^{1,1}(\Gamma_{d_0})$ such that $\eps\norma{h}_{C^{0,1}}\le\eps_0$ and $\nabla h\cdot \nu_0=0$. Then, for every $\sigma_0\in\partial\Omega$, there exists a positive constant $C=C(\Omega, \norma{h}_{C^{1,1}},\sigma_0)$ such that if 
    \[
     u \in H^1(Q_\delta)\cap H^2(Q_\delta\setminus\partial\Omega),
    \]
    and
    \[
        v(y)=u(\Phi^{-1}(y)),
    \]
    then, for every $0<\delta<\delta_0$,
      \begin{equation}
        \label{eq: I<Itilde}
         {I}_\delta(u)\le C\left(\tilde{I}_\delta(v)+\int_{{\{y_n<\eps\}}\cap\tilde{Q}_\delta}\eps(y_n)\abs{\nabla v}^2\,dy\right),
    \end{equation}
    and
      \begin{equation}
        \label{eq: Itilde<I}
         \tilde{I}_\delta(v)\le C\left({I}_\delta(u)+\int_{\Omega_\eps\cap Q_\delta}\eps(d(x))\abs{\nabla u}^2\,dx\right).
    \end{equation}
    \begin{proof}
        We start by evaluating the trace term in the definition of $\tilde I_\delta$. By means of the change of variables $y=\Phi(x)$, we have
        \begin{equation}
            \label{eq: L2horizgrad}
        \begin{split}
        \int_{{\{y_n=\eps\}}\cap\tilde{Q}_\delta}\abs{\nabla_{n-1}v}^2\,d\Hn(y)&=\sum_{i=1}^{n-1}\int_{\partial\Omega_\eps\cap V}(\nabla u\cdot w_i)^2\,J^{\partial\Omega_\eps}\Phi\,d\Hn(x),
        \end{split}
        \end{equation}
        where 
        \[
        w_i=\mathbf{e}_i+(\partial_i g+\eps \partial_i k_\eps)\,\mathbf{e}_n.
        \]
        The vectors $w_i$ (that, in general, could be non-orthogonal) form a basis for the tangent plane $T_{\sigma_0}\partial\Omega_\eps$. In particular, we have that there exists a positive constant $C=C(\Omega,h,\sigma_0)$ such that
        \begin{equation}
            \label{eq: tangentgrad}
        \abs{\nabla^{\partial\Omega_\eps}u}^2\le C\sum_{i=1}^{n-1}(\nabla u\cdot w_i)^2\le C^2\abs{\nabla^{\partial\Omega_\eps}u}^2.
        \end{equation}
        Therefore, using the uniform bounds \eqref{eq: kepsstime} we get that for some positive constant $C=C(\Omega,h)$
        \begin{equation}
        \label{eq: tangjacest}
            \frac{1}{C}\le J^{\partial\Omega_\eps}\Phi=\frac{1}{\sqrt{1+\abs{\nabla g+\eps\nabla k_\eps}^2}}\le C,
        \end{equation}
        and joining \eqref{eq: L2horizgrad}, \eqref{eq: tangentgrad}, and \eqref{eq: tangjacest}, we get
        \begin{equation}
        \label{eq: traceequiv}
        \int_{\partial\Omega_\eps\cap {Q}_\delta}\abs{\nabla^{\partial\Omega_\eps}\varphi}^2\,d\Hn\le C \int_{{\{y_n=\eps\}}\cap\tilde{Q}_\delta}\abs{\nabla_{n-1}v}^2\,d\Hn\le C^2 \int_{\partial\Omega_\eps\cap {Q}_\delta}\abs{\nabla^{\partial\Omega_\eps}\varphi}^2\,d\Hn.
        \end{equation}

        \bigskip
        For what concerns the second order term, we evaluate for a.e. $y\in \{y_n<\eps\}\cap\tilde{V}$,
        \[
            \big(D^2 v(y)\big)_{ij}=\sum_{k=1}^{n}\frac{\partial^2 (\Phi^{-1})_k}{\partial y_i\,\partial y_j}\frac{\partial u}{\partial x_k}+\sum_{k,l=1}^{n}\frac{\partial (\Phi^{-1})_k}{\partial y_i}\frac{\partial^2 u}{\partial x_k\,\partial x_l}\frac{\partial (\Phi^{-1})_l}{\partial y_j},
        \]
        so that, for some positive constant $C=C(n,\norma{\Phi^{-1}}_{C^{1,1}})$,
        \begin{equation}
        \label{eq: D2vD2u}
        \abs{D^2 v\big(\Phi(x)\big)}^2\le C\left(\abs{\nabla u(x)}^2+\abs{D^2 u(x)}^2\right).
        \end{equation}
        In a similar way we have that, for a.e. $x\in \Omega_\eps\cap V$, and for some positive constant $C=C(n,\norma{\Phi}_{C^{1,1}})$,
        \begin{equation}
        \label{eq: D2uD2v}
        \abs{D^2 u\big(\Phi^{-1}(y)\big)}^2\le C\left(\abs{\nabla v(y)}^2+\abs{D^2 v(y)}^2\right).
        \end{equation}
        Using \eqref{eq: D2vD2u} and the uniform bounds on $k_\eps$, we get
         \begin{equation}
         \label{eq: D2v<D2u}
        \begin{split}
        \int_{{\{y_n<\eps\}}\cap\tilde{Q}_\delta}\eps(y_n)\abs{D^2 v}^2\,dy&=\int_{\Omega_\eps\cap Q_\delta}\frac{\eps(d(x))}{k_\eps}\abs{D^2v\big(\Phi(x)\big)}^2\,dx \\[7 pt]
        &\le C\int_{\Omega_\eps\cap Q_\delta}\eps(d(x))\big(\abs{D^2u}^2+\abs{\nabla u}^2\big)\,d\Hn(x). 
        \end{split}
        \end{equation}
        Analogously, using \eqref{eq: D2uD2v},
        \begin{equation}
        \label{eq: D2u<D2v}
        \begin{split}
        \int_{\Omega_\eps\cap Q_\delta}\eps(d(x))\abs{D^2u}^2\,dx\le C\int_{{\{y_n<\eps\}}\cap\tilde{Q}_\delta}\eps(y_n)\left(\abs{D^2 v}^2+\abs{\nabla v}^2\right)\,d\Hn.
        \end{split}
        \end{equation}
        The result now follows by joining \eqref{eq: traceequiv}, \eqref{eq: D2u<D2v}, and \eqref{eq: D2v<D2u}.
    \end{proof}
    \end{lemma}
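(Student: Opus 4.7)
The plan is to prove the two inequalities by changing variables through the flattening diffeomorphism $\Phi$, treating the bulk and trace contributions of $I_\delta$ and $\tilde{I}_\delta$ separately. Note first that $\Phi$ maps $\Omega\cap V$, $\Sigma_\eps\cap V$, $\partial\Omega\cap V$ and $\partial\Omega_\eps\cap V$ respectively onto $\{y_n<0\}\cap\tilde V$, $\{0<y_n<\eps\}\cap\tilde V$, $\{y_n=0\}\cap\tilde V$ and $\{y_n=\eps\}\cap\tilde V$, so that the weights satisfy $\eps(d(x))=\eps(y_n)$ under the correspondence $y=\Phi(x)$. All the Jacobians involved (volume and tangential) are uniformly bounded above and below thanks to the $C^{1,1}$ bounds on $\Phi$, $\Phi^{-1}$ and $k_\eps$ collected in \autoref{rem: kepsestim}, so throughout the argument all constants depend only on $\Omega$, $\norma{h}_{C^{1,1}}$ and $\sigma_0$.

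For the trace term, I would compute $\partial_i v(y',\eps)$ via the chain rule applied to $\Phi^{-1}(y',y_n)=(y',g(y')+k_\eps(y')\,y_n)$: this gives
\[
\partial_i v(y',\eps)=\nabla u(\Phi^{-1}(y',\eps))\cdot w_i,\qquad w_i=\mathbf{e}_i+(\partial_i g+\eps\,\partial_i k_\eps)\mathbf{e}_n,\quad 1\le i\le n-1.
\]
The vectors $w_i$ form a basis of $T_{\Phi^{-1}(y',\eps)}\partial\Omega_\eps$ whose Gram matrix is uniformly controlled by $\norma{g}_{C^{1,1}}$ and $\norma{k_\eps}_{C^{1,1}}$, so $\sum_i(\nabla u\cdot w_i)^2$ is comparable to $\abs{\nabla^{\partial\Omega_\eps}u}^2$. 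The area formula on $\partial\Omega_\eps$ then converts the integral over $\{y_n=\eps\}\cap\tilde Q_\delta$ into an integral over $\partial\Omega_\eps\cap Q_\delta$ with tangential Jacobian $1/\sqrt{1+\abs{\nabla g+\eps\nabla k_\eps}^2}$, which is also uniformly bounded. Combining these facts produces a two-sided comparison of the trace parts of $I_\delta$ and $\tilde I_\delta$.

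For the bulk term, differentiating $v=u\circ\Phi^{-1}$ twice yields, pointwise in $y$,
\[
(D^2 v(y))_{ij}=\sum_{k}\partial^2_{ij}(\Phi^{-1})_k(y)\,\partial_{x_k}u(\Phi^{-1}(y))+\sum_{k,\ell}\partial_i(\Phi^{-1})_k(y)\,\partial_j(\Phi^{-1})_\ell(y)\,\partial^2_{x_k x_\ell}u(\Phi^{-1}(y)),
\]
so that the uniform $C^{1,1}$ bound on $\Phi^{-1}$ gives the pointwise inequality $\abs{D^2 v(\Phi(x))}^2\le C(\abs{D^2 u(x)}^2+\abs{\nabla u(x)}^2)$. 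Applying the change of variables $dy=\abs{\det D\Phi}\,dx$, with $\abs{\det D\Phi}=1/k_\eps$ uniformly bounded, and using the identification $\eps(y_n)=\eps(d(x))$, gives \eqref{eq: Itilde<I}; swapping the roles of $u$ and $v$, and using the analogous pointwise bound for $\abs{D^2 u(\Phi^{-1}(y))}^2$ coming from $\norma{\Phi}_{C^{1,1}}\le C$, gives \eqref{eq: I<Itilde}.

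The only subtle point, and the main obstacle, is the appearance of a first-order term $\abs{\nabla u}^2$ (respectively $\abs{\nabla v}^2$) produced by the $D^2\Phi^{-1}$ contribution in the chain rule: this term cannot be absorbed and is precisely what forces the additive gradient integral on the right-hand side of the two estimates. Everything else is a routine change of variables once the uniform-in-$\eps$ bounds of \autoref{rem: kepsestim} are available.
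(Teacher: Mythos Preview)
Your proposal is correct and follows essentially the same approach as the paper: both proofs handle the trace term by identifying $\partial_i v=\nabla u\cdot w_i$ with $w_i=\mathbf{e}_i+(\partial_i g+\eps\,\partial_i k_\eps)\mathbf{e}_n$ a basis of $T\partial\Omega_\eps$, and handle the bulk term by the second-order chain rule for $v=u\circ\Phi^{-1}$ together with the uniform $C^{1,1}$ bounds on $\Phi,\Phi^{-1},k_\eps$ from \autoref{rem: kepsestim}. Your observation that the extra gradient term is forced by the $D^2\Phi^{-1}$ contribution is exactly the point, and your Jacobian computation $\abs{\det D\Phi}=1/k_\eps$ is correct.
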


    \begin{lemma}
    \label{lem: energyestim}
        Let $\Omega\subset\R^n$ be a bounded, open set with $C^{1,1}$ boundary, fix a positive function $h\in C^{1,1}(\Gamma_{d_0})$ such that $\eps\norma{h}_{C^{0,1}}\le\eps_0$ and $\nabla h\cdot \nu_0=0$. If $\sigma_0\in\partial\Omega$, and $v$ is as in \autoref{rem: flateq}, then 
        \[
            v\in H^2\left(\set{y_n<\eps}\cap \tilde{Q}_{\delta_0/2}\setminus\set{y_n=0}\right),
        \]
    and there exists a positive constant $C=C(\Omega, \norma{h}_{C^{1,1}}, \beta, f, \sigma_0)$ such that
    \begin{equation}
        \label{eq: ideltaest}
         \tilde{I}_{\delta_0/2}(v)\le C.
         \end{equation}
    \end{lemma}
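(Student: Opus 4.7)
The strategy is the classical Nirenberg difference-quotient technique, adapted to the diffraction equation \eqref{eq: recteq} across the transmission interface $\{y_n=0\}$. The key geometric observation is that this interface, the top boundary $\{y_n=\eps\}$, and the weight $\eps(y_n)$ are all invariant under tangential translations, so difference quotients in the $\mathbf{e}_i$-direction for $i<n$ can be moved around freely without interacting with the jump. I would fix a cutoff $\eta\in C^\infty_c(\tilde{Q}_{3\delta_0/4})$ with $\eta\equiv 1$ on $\tilde{Q}_{\delta_0/2}$ and $0\le\eta\le 1$.

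For each tangential index $i\in\{1,\dots,n-1\}$ and every sufficiently small $h$, I would test \eqref{eq: recteq} with
\[
\varphi=-D^i_{-h}(\eta^2 D^i_h v)\in H^1_0(\tilde{V}),
\]
where $D^i_h$ is the usual tangential finite difference. Discrete integration by parts, the uniform $C^{0,1}$ bounds on $A_\eps$, $J_\eps$, and $k_\eps$ coming from \eqref{eq: kepsstime}, the uniform ellipticity of $A_\eps$, together with Cauchy--Schwarz and Young on the right-hand side, yield after absorption a bound of the form
\[
\int_{\{y_n<\eps\}\cap\tilde{V}}\eps(y_n)\,\eta^2\abs{D^i_h\nabla v}^2\,dy+\beta\int_{\{y_n=\eps\}\cap\tilde{V}}\eta^2\abs{D^i_h v}^2 J_\eps\,d\Hn\le C,
\]
where the constant $C$ depends on $\Omega$, $\norma{h}_{C^{1,1}}$, $\beta$, $f$, and $\sigma_0$, but neither on $\eps$ nor on $h$, thanks to the a priori energy bound \eqref{eq: firstderboundrect} and to $f\in L^2(\Omega)$. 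Passing to the limit $h\to 0^+$ yields uniform $L^2$ control of $\partial_i\nabla v$ on $\{y_n<\eps\}\cap\tilde{Q}_{\delta_0/2}\setminus\{y_n=0\}$ and of $\partial_i v$ on $\{y_n=\eps\}\cap\tilde{Q}_{\delta_0/2}$, for every $i<n$. This already handles all tangential-tangential and tangential-normal entries of $D^2 v$, as well as the entire trace term in the definition of $\tilde{I}_{\delta_0/2}(v)$.

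To recover the remaining entry $\partial_{nn}v$, I would use the strong form of \eqref{eq: recteq}, valid in $H^2_{\loc}$ on each of the two subregions separately: $-\divv(A_\eps\nabla v)=\tilde{f}_\eps$ in $\{y_n<0\}$ and $-\divv(A_\eps\nabla v)=0$ in $\{0<y_n<\eps\}$, since $f$ is supported in $\Omega$. Expanding the divergence and solving algebraically,
\[
A_\eps^{nn}\,\partial_{nn}v=-\sum_{(i,j)\ne(n,n)}A_\eps^{ij}\,\partial_{ij}v-\sum_{i,j}(\partial_i A_\eps^{ij})\,\partial_j v+\tilde{f}_\eps\,\chi_{\{y_n<0\}}.
\]
Uniform ellipticity gives $A_\eps^{nn}\ge\lambda>0$ with $\lambda$ independent of $\eps$, and every term on the right is already controlled in $L^2(\tilde{Q}_{\delta_0/2})$ by the tangential step together with \eqref{eq: firstderboundrect}. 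Multiplying through by the appropriate $\eps(y_n)$ weight and integrating closes the estimate on $\tilde{I}_{\delta_0/2}(v)$.

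The main obstacle is keeping track of the behaviour across the transmission interface $\{y_n=0\}$. The tangential-difference argument works precisely because $D^i_h$ commutes with the splitting of $\tilde{V}$ into the upper and lower regions, so the discrete integration by parts can be performed on each region individually, and the interface contributions cancel exactly via the transmission identity that is already encoded in the weak equation \eqref{eq: recteq}. A normal difference quotient, by contrast, would not be admissible for this very reason, and this is why the last entry $\partial_{nn}v$ must be obtained algebraically through the PDE rather than variationally.
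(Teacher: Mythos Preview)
Your proposal is correct and follows essentially the same route as the paper: a tangential Nirenberg difference-quotient argument with the test function $-D^i_{-h}(\eta^2 D^i_h v)$, using the uniform $C^{0,1}$ bounds on $A_\eps$ and $J_\eps$ together with \eqref{eq: firstderboundrect} to control the mixed second derivatives and the boundary term, and then recovering $\partial_{nn}v$ algebraically from the strong form of the PDE and the uniform lower bound on $A_\eps^{nn}$. The only cosmetic differences are notational (the paper writes $\Delta_k^\eta$ and uses a cutoff supported in $\tilde{Q}_{\delta_0}$), and you should avoid using $h$ for the difference-quotient step since it clashes with the layer-thickness function.
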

           \begin{proof}
            Let $\xi\in C^\infty_c(\tilde{Q}_{\delta_0})$ be a non negative function with $\xi\equiv1$ in $\tilde{Q}_{\delta_0/2}$. For $\abs{\eta}$ small enough, we have 
            \[
            \supp\xi+\eta\mathbf{e}_i \ssubset \tilde{Q}_{\delta_0}
            \]
            for every $k=1,\dots,n-1$. In the following, for any $L^2$ function $\psi$, we denote by  
            \[
            \Delta_k^\eta \psi(x)=\frac{\psi(x+\eta\mathbf{e}_k)-\psi(x)}{\eta}
            \]
            the difference quotients (see for instance \cite[\S 5.8.2]{evans}). We recall that for any couple of functions $\psi_1$ and $\psi_2$ we have
            \[
                \Delta_k^\eta (\psi_1 \psi_2)(x)=\Delta_k^\eta\psi_1(x)\psi_2(x)+\psi_1(x+\eta\mathbf{e}_k)\Delta_k^\eta\psi_2(x),
            \]
            and if $\psi_1$ and $\psi_2$ are measurable and
            \[            (\supp\psi_1\cap\supp\psi_2)\pm\eta\mathbf{e}_k \ssubset \tilde{Q}_\delta,
            \]
            then for every $k=1,\dots,n-1$, it holds
            \begin{equation}
            \label{eq: intparts}
                \int_{\tilde{Q}_{\delta_0}}\psi_1(y)\Delta_k^{\eta}\psi_2(y)\,dy=-\int_{\tilde{Q}_{\delta_0}} \Delta_k^{-\eta}\psi_1(y)\: \psi_2(y)\,dy.
            \end{equation}
            Moreover, we recall that if $\psi\in H^1({\{y_n<\eps\}\cap\tilde{Q}_{\delta_0}})$, then for every $U\ssubset{\tilde{Q}_{\delta_0}}$ we have that %
            \begin{equation}
            \label{eq: dqestimL2}
            \int_U(\Delta_k^\eta \psi)^2\,dy\le \int_{\tilde{Q}_{\delta_0}}\abs{\nabla \psi}^2\,dy,
            \end{equation}
            for every $\abs{\eta}<d(U,\partial \tilde{Q}_{\delta_0})$.
            We aim to prove some uniform $L^2$ estimates for $\Delta_k^\eta(\nabla v)$, which will imply weak differentiability and uniform $L^2$ estimates for $\partial_k\nabla v$.
            
            Given the equation \eqref{eq: recteq}, we use as a test function
            \[
                \varphi=-\Delta_k^{-\eta}(\xi^2 \Delta_k^\eta v).
            \]
For small $\abs{\eta}$, the function $\varphi$ is admissible. Here we recall the weak equation
\[
 \underbrace{\int_{\{y_n< \eps\}\cap \tilde{V}} \eps(y_n) A_\eps \nabla v\cdot \nabla \varphi\, dy}_{\mathcal{I}_1} +\underbrace{\beta \int_{\{y_n=\eps\}\cap \tilde{V}}v\varphi J_\eps\, d\Hn}_{\mathcal{I}_2} = \underbrace{\int_{\{y_n<0\}\cap \tilde{V} }\tilde f_\eps \varphi \, dy}_{\mathcal{I}_3}.
\]
We now estimate the integrals separately. In the following, we use the Einstein notation on repeated indices, where $i,j=1,\dots,n$, and for simplicity we drop the dependence on $\eps$ in $A_\eps$, whose components are denoted by $a_{ij}$. We recall that $a_{ij}=a_{ij}(y)$ and $J_\eps=J_\eps(y)$. When necessary, we will also write $a_{ij}^\eta:=a_{ij}(y+\eta \mathbf{e}_k)$ and $J_\eps^\eta:=J_\eps(y+\eta\mathbf{e_k})$.

\medskip
Let us start with the higher-order term $\mathcal{I}_1$: by direct computation, we get
            \[
            \begin{split}
            A\nabla v\cdot \nabla\varphi&=-a_{ij}\:\partial_j v\:\Delta_k^{-\eta}\,\partial_i(\xi^2 \Delta_k^\eta v) \\[5 pt]
            &=-a_{ij}\,\partial_j v\,\left[\Delta_k^{-\eta}(\partial_i(\xi^2)\:\Delta_k^\eta v)+\Delta_k^{-\eta}(\xi^2 \Delta_k^\eta (\partial_i v))\right] 
            \end{split}
            \]
            Multiplying by $\eps(y_n)$, integrating over $\{y_n<\eps\}\cap\tilde{Q}_{\delta_0}$, and using property \eqref{eq: intparts}, we get (notice that $\eps(y_n)$ is constant along the direction $\mathbf{e}_k$)
            \begin{equation}
                \label{A}
            \begin{split}
                \mathcal{I}_1&=\int_{\{y_n<\eps\}\cap\tilde{Q}_{\delta_0}}\eps(y_n)\Delta_k^\eta(a_{ij}\,\partial_j v) \,\partial_i(\xi^2) \Delta_k^\eta v\,dy \\[7 pt]
                &\hphantom{=\,}+\int_{\{y_n<\eps\}\cap\tilde{Q}_{\delta_0}}\eps(y_n)\Delta_k^\eta(a_{ij}\partial_j v)\,\xi^2 \,\Delta_k^\eta (\partial_i v)\, dy\\[7 pt]
                &=\int_{\{y_n<\eps\}\cap\tilde{Q}_{\delta_0}}\eps(y_n)\xi^2 \,a_{ij}^\eta\, \Delta_k^\eta(\partial_j v)\,\Delta_k^\eta (\partial_i v)\, dy + R_1 \\[7 pt]
                & \ge C_1\int_{\{y_n<\eps\}\cap\tilde{Q}_{\delta_0}}\eps(y_n)\xi^2 \,\sum_{j=1}^n\big(\Delta_k^\eta(\partial_j v)\big)^2\,dy-\abs{R_1},
            \end{split}
            \end{equation}
            where $C_1$ is the ellipticity constant of $A$ estimated uniformly in \autoref{rem: flateq}, and
            \[
            \begin{split}
                R_1&=2\int_{\{y_n<\eps\}\cap\tilde{Q}_{\delta_0}}\eps(y_n)\left(\Delta_k^\eta (a_{ij})\partial_j v+a_{ij}^\eta\,\Delta_k^\eta(\partial_j v)\right) \, \xi\,\partial_i\xi \,\Delta_k^\eta v\,dy \\[7 pt]
                &\hphantom{=\,}+\int_{\{y_n<\eps\}\cap\tilde{Q}_{\delta_0}}\eps(y_n)\Delta_k^\eta(a_{ij})\partial_j v\,\xi^2 \,\Delta_k^\eta (\partial_j v)\, dy. 
            \end{split}
            \]
Using Young's inequality
\[
ab\le \dfrac{\lambda}{2} a^2+\dfrac{1}{2\lambda} b^2 \qquad\quad \forall a,b\ge 0,
\]
with a suitable choice of $\lambda>0$, and using the uniform bounds on $\norma{\xi}_{C^1}$ and $\norma{A_\eps}_{C^1}$ (see \autoref{rem: flateq} and \autoref{rem: kepsestim}), we may obtain a positive constant $C=C(A,\xi)$ such that the following estimate holds
\[
\begin{split}
\abs{R_1}&\le  \frac{C_1}{2}\int_{\{y_n<\eps\}\cap\tilde{Q}_{\delta_0}}\eps(y_n)\xi^2 \,\sum_{j=1}^n\big(\Delta_k^\eta(\partial_j v)\big)^2\,dy\\[7 pt] 
&\hphantom{= }+C\int_{\{y_n<\eps\}\cap\,\supp\xi} \eps(y_n)\left((\Delta_k^\eta v)^2+\sum_{j=1}^n(\partial_j v)^2\right)\,dy.
\end{split}
\]
This estimate, joint with \eqref{A}, implies
\begin{equation}
\label{Ainf}
\mathcal{I}_1\ge \frac{C_1}{2}\int_{\{y_n<\eps\}\cap\tilde{Q}_{\delta_0}}\eps(y_n)\xi^2 \,\sum_{j=1}^n\big(\Delta_k^\eta(\partial_j v)\big)^2\,dy-C \int_{\{y_n<\eps\}\cap\tilde{Q}_{\delta_0}} \eps(y_n)\abs{\nabla v}^2\,dy.
\end{equation}

\bigskip
Similarly, let us work with the boundary terms $\mathcal{I}_2$, where we have 
\[
    v\varphi J_\eps=-v J_\eps \Delta_k^{-\eta}(\xi^2\Delta_k^\eta v).
\]
Since $k\ne n$, we can use an analogous of property \eqref{eq: intparts} also on $\set{y_n=\eps}\cap\tilde{Q}_{\delta_0}$, so that
\begin{equation}
    \label{j}
\begin{split}
    \mathcal{I}_2&=\beta\int_{\set{y_n=\eps}\cap\tilde{Q}_{\delta_0}}\xi^2 \Delta_k^\eta(v J_\eps)\Delta_k^\eta v\,d\Hn \\[7 pt]
    &=\beta\int_{\set{y_n=\eps}\cap\tilde{Q}_{\delta_0}}\xi^2 (\Delta_k^\eta v)^2 J_\eps^\eta\,d\Hn+R_2 \\[7 pt]
    &\ge \beta C_2\int_{\set{y_n=\eps}\cap\tilde{Q}_{\delta_0}}\xi^2 (\Delta_k^\eta v)^2\,d\Hn-\abs{R_2},
\end{split}
\end{equation}
where $C_2=\inf_{Q_\delta} J_\eps$ uniformly estimated in \autoref{rem: flateq}, and
\[
 R_2=\beta\int_{\set{y_n=\eps}\cap\tilde{Q}_{\delta_0}}\xi^2 \Delta_k^\eta(J_\eps)v \Delta_k^\eta v\,d\Hn 
\]
As done for $\mathcal{I}_1$, using Young's inequality and the uniform bounds on 
$\norma{J_\eps}_{C^1}$ (see \autoref{rem: flateq} and \autoref{rem: kepsestim}), we have that for some positive constant $C=C(J_\eps,\xi)$,
\[
    \abs{R_2} \le \frac{\beta C_2}{2}\int_{\set{y_n=\eps}\cap\tilde{Q}_{\delta_0}}\xi^2 (\Delta_k^\eta v)^2\,d\Hn + C\beta\int_{\set{y_n=\eps}\cap\tilde{Q}_{\delta_0}} v^2\,d\Hn 
\]      
This estimate, joint with \eqref{j} ensures
\begin{equation}
\label{Jinf}
\mathcal{I}_2\ge \frac{\beta C_2}{2}\int_{\set{y_n=\eps}\cap\tilde{Q}_{\delta_0}}\xi^2 (\Delta_k^\eta v)^2\,d\Hn-C\beta\int_{\set{y_n=\eps}\cap\tilde{Q}_{\delta_0}} v^2\,d\Hn.
\end{equation}

\bigskip
Finally, we estimate the source term $\mathcal{I}_3$, recalling that
\[
   \tilde{f}_\eps\varphi =-\tilde{f}_\eps\Delta_k^{-\eta}(\xi^2 \Delta_k^\eta v).
\]
Using Young's inequality and property \eqref{eq: dqestimL2} for $\psi=\xi^2 \Delta_k^\eta v$ (notice that $\nabla \Delta_k^\eta v=\Delta_k^\eta(\nabla v)$), then for a suitable positive constant $C=C(\tilde{f},\xi,\delta_0)$,
        \begin{equation}\label{f}  
            \begin{split}
                \mathcal{I}_3 &\le \frac{C_1}{4}\int_{\set{y_n<0}\cap\tilde{Q}_{\delta_0}} \xi^2 \sum_{j=1}^n (\Delta_k^\eta \partial_j v)^2\,dy \\[7 pt] 
                &\hphantom{=}+ C\int_{\set{y_n<0}\cap\tilde{Q}_{\delta_0}}\tilde{f}_\eps^2\,dy+ C\int_{\set{y_n<0}\cap\tilde{Q}_{\delta_0}}\abs{\nabla v}^2\,dy \\[7 pt]
                &\le \frac{C_1}{4}\int_{\set{y_n<\eps}\cap\tilde{Q}_{\delta_0}} \eps(y_n)\xi^2 \sum_{j=1}^n (\Delta_k^\eta \partial_j v)^2\,dy\\[7 pt] 
                &\hphantom{=}+ C\int_{\set{y_n<0}\cap\tilde{Q}_{\delta_0}}\tilde{f}_\eps^2\,dy+ C\int_{\set{y_n<\eps}\cap\tilde{Q}_{\delta_0}}\eps(y_n)\abs{\nabla v}^2\,dy.
            \end{split}
        \end{equation}
        
We can now turn back to the equation
\[
\mathcal{I}_1+\mathcal{I}_2=\mathcal{I}_3
\]
Joining \eqref{Ainf}, \eqref{Jinf}, \eqref{f}, and using the fact that $\xi=1$ in $\tilde{Q}_{\delta_0/2}$, then, for every $k=1,\dots,n-1$,
\begin{equation}
    \label{D^}
    \begin{split}
                \mathcal{I}_4&:=\frac{C_1}{4}\int_{\set{y_n<\eps}\cap\tilde{Q}_{\delta_0/2}} \eps(y_n)\sum_{j=1}^n (\Delta_k^\eta \partial_j v)^2\,dy\\[7 pt] 
                &\hphantom{=}+\frac{\beta C_2}{2}\int_{\set{y_n=\eps}\cap\tilde{Q}_{\delta_0/2}} (\Delta_k^\eta v)^2\,d\Hn \\[7 pt] 
                &\le C \Bigg(\int_{\set{y_n<\eps}\cap\tilde{Q}_{\delta_0}} \eps(y_n)\abs{\nabla v}^2\,dy\\[7 pt] 
                &\hphantom{= C\Bigg(}+\beta\int_{\set{y_n=\eps}\cap\tilde{Q}_{\delta_0}}v^2\,d\Hn+\int_{\set{y_n<0}\cap\tilde{Q}_{\delta_0}} \tilde{f}_\eps^2\,dy\Bigg).
    \end{split}
\end{equation}
Since we have assumed the uniform $H^1$ estimates \eqref{eq: firstderbound}, and we have uniform estimates for $\tilde{f}_\eps$, computed in \autoref{rem: flateq}, we get that for some positive constant $C=C(\delta_0,\Omega,\xi)$
\[
        \mathcal{I}_4\le C,
\]
which implies that for every $k=1,\dots,n-1$, we have $\partial_k v\in H^1(\{y_n<\eps\}\cap \tilde{Q}_{\delta_0/2})$ and
\[\begin{split}\int_{\set{y_n<\eps}\cap \tilde{Q}_{\delta_0/2}} \eps(y_n) \sum_{j=1}^n(\partial_k\partial_j v)^2\,dy&\le C \Bigg(\int_{\set{y_n<\eps}\cap\tilde{Q}_{\delta_0}} \eps(y_n)\abs{\nabla v}^2\,dy\\[7 pt] 
                &\hphantom{= C\Bigg(}+\beta\int_{\set{y_n=\eps}\cap\tilde{Q}_{\delta_0}}v^2\,d\Hn+\int_{\set{y_n<0}\cap\tilde{Q}_{\delta_0}} \tilde{f}_\eps^2\,dy\Bigg). \end{split}\]

It only remains to get a uniform estimate for $\partial^2_{nn}v$. We notice that since $v$ solves \eqref{eq: recteq}, then almost everywhere we have
\[
     -\sum_{\substack{1\le i,j\le n \\ (i,j)\ne(n,n)}}\eps(y_n)\partial_i(a_{ij}\partial_{j}v)-\eps(y_n)\partial_n a_{nn}\partial_n v-\eps(y_n)a_{nn}\partial^2_{nn}v=\tilde{f}\chi_{\{y_n<0\}}.
    \]
     In particular, the fact that $a_{nn}=k_\eps$ is uniformly bounded from below gives estimates for $\partial^2_{nn}v$ in terms of $A_\eps$, $\tilde{f}$, and the other derivatives of $v$. Therefore, so that, for every $0<\delta<\delta_0$, we can find a positive constant $C=C(\Omega,h,f)$ such that
            \begin{equation}\label{Vnn}
            \begin{split}
                \int_{\set{y_n<\eps}\cap\tilde{Q}_{\delta}}\eps(y_n)\abs{\partial_{nn}^2 v}^2\le C\Bigg(&\int_{\set{y_n<\eps}\cap\tilde{Q}_{\delta}}\eps(y_n)\sum_{k=1}^{n-1}{\abs{\nabla \partial_k v}}^2\,dy \\[7pt] 
                &+\int_{\set{y_n<\eps}\cap\tilde{Q}_{\delta}}\eps(y_n)\abs{\nabla v}^2\,d\Hn + 1\Bigg).
            \end{split}
            \end{equation}
  Finally, \eqref{D^}, joint with \eqref{Vnn} and the bound \eqref{eq: firstderboundrect}, gives \eqref{eq: ideltaest}.
           
        \end{proof}

\begin{oss}
\label{rem: equivIde}
        Let $\sigma_0\in\partial\Omega$, by the previous lemma we have that $u_\eps\in H^2(Q_{\delta_0/2}\setminus\partial\Omega)$. Moreover, putting together estimates \eqref{eq: I<Itilde} and \eqref{eq: Itilde<I} and the bounds on the first derivative (\eqref{eq: firstderbound} and \eqref{eq: firstderboundrect}), we have
        \begin{equation}
            \label{eq: IdeltatoF}
            I_{\delta}(u_\eps)\le C(1+\tilde{I}_\delta(v_\eps))\le C^2(1+I_\delta(u_\eps)).
        \end{equation}
    \end{oss}

We can finally prove \autoref{teor: C11energy}.

\begin{proof}[Proof of \autoref{teor: C11energy}]
We recall that we have defined $\tilde{I}_{\delta,\sigma}$ and $I_{\delta,\sigma}$ in \eqref{def:tildeI} and \eqref{def:I} respectively. For every $\sigma_0\in\partial\Omega$, using \eqref{eq: ideltaest} from \autoref{lem: energyestim}, an and \eqref{eq: IdeltatoF} from \autoref{rem: equivIde}, we have that there exists a constant $C=C(\Omega,h,\sigma_0,\delta_0)$ such that
\begin{equation}\label{est}
I_{\delta_0/2,\sigma_0}(u_\eps)\le C.
\end{equation}
From the boundedness of $\Omega$ there exist $\sigma_1,\dots,\sigma_m\in\partial\Omega$ and associated $\delta_i=\delta(\sigma_i,\Omega)$ for which estimates of the type \eqref{est} hold and such that, choosing $\eps_0=\min\{\eps_0(\sigma_1),\dots,\eps_0(\sigma_m)\}>0$, we have
\[
\Sigma_\eps\subset\bigcup_{i=1}^m \Phi_{\sigma_i}^{-1}(\tilde{Q}_{\delta_i/2})=V_0.
\]
Let $U$ be an open set such that $\overline{U}\subset\Omega$ and $\Omega_\eps\subset U\cup V_0$ for every $\eps$ such that $\eps\norma{h}_{C^{0,1}}<\eps_0$. Also in $U$ we may get an estimate analogous to \eqref{est}. Indeed, by standard elliptic regularity and by estimate \eqref{eq: L2bound}, we have that
\begin{equation}\label{estint}
\int_U \abs{D^2 u_\eps}^2\,dx\le C\left(\int_\Omega f^2\,dx+\int_\Omega u_\eps^2\right)\le C.
\end{equation}
Let 
\[I(u_\eps)=\int_{\Omega} \abs{D^2 u_\eps}^2\,dx+\eps\int_{\Sigma_\eps}\abs{D^2 u_\eps}^2\,dx+\beta\int_{\partial\Omega_\eps} \abs{\nabla^{\partial\Omega_\eps} u_\eps}^2\,d\Hn,\]
summing from $i=1$ to $m$  the estimates of the type \eqref{est} and \eqref{estint}, we have that there exists $C=C(\Omega,h,f)$ such that
\[I(u_\eps)\le C,\]
and the assertion is proven.
\end{proof}

\section*{Acknowledgements}
The authors are members of Gruppo Nazionale per l’Analisi Matematica, la Probabilità e le loro Applicazioni
(GNAMPA) of Istituto Nazionale di Alta Matematica (INdAM). 
The authors were partially supported by the project GNAMPA 2023: "Symmetry and asymmetry in PDEs", CUP\_E53C22001930001, and by the 2024 project GNAMPA 2024: "Modelli PDE-ODE nonlineari e proprieta' di PDE su domini standard e non-standard", CUP\_E53C23001670001.

 \bigskip

 \newpage 
\printbibliography[heading=bibintoc]
\Addresses
\end{document}